\newtheorem*{rep@theorem}{\rep@title}
\newcommand{\newreptheorem}[2]{%
\newenvironment{rep#1}[1]{%
 \def\rep@title{#2 \ref{##1}}%
 \begin{rep@theorem}}%
 {\end{rep@theorem}}}
\newtheorem{theorem}{Theorem}
\newtheorem{definition}{Definition}
\newtheorem{lemma}{Lemma}
 \newtheorem{prop}{Proposition}                                                                                                                                                                                                                                                     
 \newtheorem{corollary}{Corollary}
 \newtheorem{question}{Question}
\begin{document}
 \title{ Unboundedness Theorems for Symbols Adapted to Large Subspaces}
 \author{Robert M. Kesler}
 \maketitle

 \abstract{For every integer $n \geq 3$, we prove that the n-sublinear generalization of the Bi-Carleson operator of Muscalu,  Tao, and Thiele given by
 
 \begin{eqnarray*}
nC^{\vec{\alpha}} :(f_1,..., f_n) \mapsto  \sup_{M} \left| \int_{\vec{\xi} \cdot \vec{\alpha} >0, \xi_n < M} \left[\prod_{j=1}^n  \hat{f}_j(\xi_j) e^{2 \pi i x \xi_j }\right]d\vec{\xi} ~\right|
 \end{eqnarray*}
satisfies no $L^p$ estimates provided $\vec{\alpha} \in \mathbb{Q}^n$ with distinct, non-zero entries.  Furthermore, if $n \geq 5$ and $\vec{\alpha} \in \mathbb{Q}^n$ has distinct, non-zero entries, it is shown that there is a  symbol $m:\mathbb{R}^n \rightarrow \mathbb{C}$ adapted to the hyperplane $\Gamma^{\vec{a}}=\left\{ \vec{\xi} \in \mathbb{R}^n: \sum_{j=1}^n \xi_j \cdot a_j =0 \right\} $ and supported in $\left\{ \vec{\xi} : dist(\vec{\xi}, \Gamma^{\vec{\alpha}}) \lesssim 1 \right\}$ 
for which the associated $n$-linear multiplier  also satisfies no $L^p$ estimates. Next, we construct a H\"{o}rmander-Marcinkiewicz symbol $\Pi: \mathbb{R}^2 \rightarrow \mathbb{C}$, which is a paraproduct of $(\phi, \psi)$ type, such that the trilinear operator $T_m$ whose symbol $m$ is $ sgn(\xi_1 + \xi_2) \Pi(\xi_2, \xi_3)$ satisfies no $L^p$ estimates.   Finally, we state a converse to a theorem of Muscalu, Tao, and Thiele using Riesz kernels in the spirit of Muscalu's recent work: for every pair of integers $(\mathfrak{d},n) $ s.t. $ \frac{n}{2}+\frac{3}{2} \leq \mathfrak{d}<n$ there is an explicit collection $\mathfrak{C}$ of uncountably many $\mathfrak{d}$-dimensional non-degenerate subspaces of $\mathbb{R}^n$ such that for each $\Gamma \in \mathcal{C}$  there is an associated symbol $m_\Gamma$ adapted to $\Gamma$ in the Mikhlin-H\"{o}rmander sense and supported in $\left\{ \vec{\xi} : dist(\vec{\xi}, \Gamma) \lesssim 1 \right\}$ for which the associated multilinear multiplier $T_{m_\Gamma}$ is unbounded. }

\section{Introduction}
C. Muscalu, T. Tao, and C. Thiele prove in \cite{MR2221256} that a maximal variant of the $BHT$ called the Bi-Carleson operator  defined \emph{a priori}  for any pair of Schwartz functions $f_1, f_2$ by 
 
 \begin{eqnarray*}
BiC(f_1, f_2)(x)= \sup_{N \in \mathbb{R}} \left| \int_{\xi_1 < \xi_2< N} \hat{f}_1(\xi_1) \hat{f}_2(\xi_2) e^{2 \pi i x(\xi_1 + \xi_2)} d\xi_1 d\xi_2 \right|
 \end{eqnarray*}
 extends to a continuous map $L^{p_1}(\mathbb{R}) \times L^{p_2}(\mathbb{R}) \rightarrow L^{\frac{ p_1 p_2}{p_1 + p_2}} (\mathbb{R})$ for all $(p_1, p_2)$ such that $1<p_1,p_2 <\infty$, and $\frac{1}{p_1}+\frac{1}{p_2} <3/2$. While the Bi-Carleson is non-degenerate, it is well known that multi-(sub)linear Fourier multipliers with degenerate singularities may satisfy no $L^p$ estimates. To be precise, we recall the following two definitions: 
 
 \begin{definition}\cite{MR1887641}
A $\mathfrak{d}-$dimensional subspace $\Gamma \subset \mathbb{R}^n$ is said to be non-degenerate provided

\begin{eqnarray*}
\tilde{\Gamma} := \left\{ (\xi_1,..., \xi_{n+1}) : (\xi_1, ..., \xi_n) \in \Gamma , \sum_{j=1}^{n+1} \xi_j =0\right\} \subset \mathbb{R}^{n+1}
\end{eqnarray*}
is a graph over the variables $(\xi_{i_1}, ..., \xi_{i_\mathfrak{d}})$ for every chain $1 \leq i_1 < ...<i_\mathfrak{d} \leq n+1$.  A subspace $\Gamma \subset \mathbb{R}^n$ is said to be degenerate if it is not non-degenerate. 
\end{definition}

\begin{definition}
An $n$-(sub)linear operator $T$ defined \emph{a priori} on $\mathcal{S}(\mathbb{R})^n$ satisfies no $L^p$ estimates provided there does not exist any $n-$tuple $ (p_1, ..., p_n) \in [1, \infty]^n$ for which threre is a constant $C_{T,\vec{p}}$ such that 
\begin{eqnarray*}
\left|\left| T\left(\vec{f}\right)\right|\right|_{\frac{1}{\sum_{j=1}^n \frac{1}{p_j}}} \leq C_{T, \vec{p}} \prod_{j=1}^n || f_j||_{p_j}
\end{eqnarray*}
for all $n$-tuples of functions $(f_1, ..., f_n) \in \mathcal{S}(\mathbb{R})^n$.
\end{definition}

For example, \cite{MR1981900} shows that the degenerate operator
 \begin{eqnarray*}
\widetilde{BiC}(f_1, f_2)(x)= \sup_{M \in \mathbb{R}} \left| \int_{\xi_1 + \xi_2 <0, \xi_2 <M} \hat{f}_1(\xi_1) \hat{f}_2(\xi_2) e^{2 \pi i x (\xi_1 + \xi_2)} d\xi_1 d\xi_2 \right|
 \end{eqnarray*}
 satisfies no $L^p$ estimates even though the Hilbert transform of a product of functions $H_2: (f,g) \mapsto H(f \cdot g)$ is a multiplier with degenerate singularity $\{\xi_1 + \xi_2 =0\}$ that still maps into $L^p$ for $1 < p <\infty$. Degenerate simplex multipliers for which no $L^p$ estimates hold do nonetheless have weaker mixed estimates, see \cite{MR3286565}.  Our first result in this paper is to exhibit $L^p$ unboundedness results for the most natural generalization of $BiC$ to higher dimensions. Specifically, we prove

 \begin{theorem}\label{MT**}  Fix a dimension $n \geq 3$ along with $\vec{\alpha} \in \mathbb{R}^n$ satisfying $\alpha_j^{-1} = \alpha + q_j$ for some $\vec{q} \in \mathbb{Q}^n$ and $\alpha \in \mathbb{R}$ such that $q_i \not = q_j$ whenever $ i \not = j$ and $ q_j \not =-\alpha$ for all $1 \leq j \leq n$. Then the operator defined a priori on $\mathcal{S}(\mathbb{R})^n$ by the formula 
 
 \begin{eqnarray*}
 nC^{\vec{\alpha}} (\vec{f})(x) = \sup_{M \in \mathbb{R}} \left| \int_{\vec{\xi} \cdot \vec{\alpha} >0, \xi_n < M} \left[ \prod_{j=1}^n \hat{f}_j(\xi_j) e^{2 \pi i x \xi_j }\right] \xi_1 ... d\xi_n \right|.
 \end{eqnarray*}
 satisfies no $L^p$ estimates. 
\end{theorem}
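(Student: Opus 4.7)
The plan is to argue by contradiction, combining a linearization of the Carleson supremum with a freezing of $n-2$ inputs to reduce the question to the $n=2$ unboundedness of the degenerate bilinear Bi-Carleson $\widetilde{BiC}$ established in \cite{MR1981900}. Suppose $nC^{\vec\alpha}$ were bounded from $L^{p_1}\times\cdots\times L^{p_n}$ into $L^p$ for some admissible tuple. For each measurable selection $M\colon\mathbb{R}\to\mathbb{R}$, the linearized $n$-linear operator $T_M\vec f(x) := \int_{\vec\xi\cdot\vec\alpha>0,\,\xi_n<M(x)} \prod_{j=1}^n \hat f_j(\xi_j)\,e^{2\pi i x \xi_j}\,d\vec\xi$ is pointwise dominated in absolute value by $nC^{\vec\alpha}\vec f$, so the $n$-linear Fourier multipliers with symbols $\mathbf{1}_{\vec\xi\cdot\vec\alpha>0}\,\mathbf{1}_{A<\xi_n<B}$ are uniformly $L^p$-bounded in $A<B$.

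Next I freeze $n-2$ of the input functions to reduce dimensionality. Fix a subset $S\subset\{1,\dots,n-1\}$ with $|S|=n-2$, let $i$ denote the remaining index, and for each $j\in S$ select a Schwartz wavepacket $f_j^\epsilon$ whose Fourier transform is a bump of width $\epsilon$ centered at a frequency $\xi_j^\star$. The rational structure $\alpha_j^{-1}=\alpha+q_j$ and the distinctness of the $q_j$ allow me to choose the $\xi_j^\star$ so that $\sum_{j\in S}\alpha_j\xi_j^\star=0$, and a sign analysis on $\vec\alpha$ (augmented, where needed, by the symmetry that flips $f_n$ and simultaneously reflects the Carleson cut) lets me select $i$ with $\alpha_i\alpha_n>0$. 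A Fubini-plus-convolution computation then shows that, in the approximate-identity limit $\epsilon\to 0$, the action of $nC^{\vec\alpha}$ on these inputs reduces, up to an overall bounded modulation factor arising from the frozen profiles, to the bilinear maximal operator with symbol $\mathbf{1}_{\alpha_i\eta_i+\alpha_n\eta_n>0,\,\eta_n<M}$ applied to the live inputs $(f_i,f_n)$. Since $\alpha_i\alpha_n>0$, the substitution $\eta_i\mapsto(\alpha_n/\alpha_i)\eta_i$ converts this bilinear operator into the standard $\widetilde{BiC}$, up to an $L^{p_i}$-preserving dilation of $f_i$, and \cite{MR1981900} supplies the contradiction.

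The main technical obstacle lies in executing the freezing-and-limit step rigorously, since the supremum $\sup_M$ does not commute a priori with the $\epsilon\to 0$ limit of the frozen profiles and, moreover, the constraint $\vec\xi\cdot\vec\alpha>0$ genuinely couples the frozen coordinates to the live ones. I would address this by linearizing the supremum at the bilinear level as well, choosing a measurable $M(x)$ that nearly realizes the $\widetilde{BiC}$-maximum for each $x$, and propagating this selection back through the freezing reduction; the convolutional error terms from the finite-width profiles are then absorbed into the uniform multiplier bounds from the first step. The hypotheses that the $q_j$ are distinct and that $q_j\neq -\alpha$ are essential both in arranging the balance relation $\sum_{j\in S}\alpha_j\xi_j^\star = 0$ with admissible centers and in the sign-pattern case analysis selecting the live index $i$.
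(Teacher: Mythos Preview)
Your freezing-to-bilinear reduction has a genuine gap at exactly the point where the unboundedness lives. When you replace $n-2$ inputs by wavepackets $f_j^\epsilon$ centered at frequencies with $\sum_{j\in S}\alpha_j\xi_j^\star=0$, the half-space constraint $\vec\xi\cdot\vec\alpha>0$ does not become the sharp bilinear constraint $\alpha_i\xi_i+\alpha_n\xi_n>0$: it becomes a \emph{smeared} version, transitioning over a strip of width $\sim\epsilon$ around the degenerate line. The difference between the smeared and sharp symbols is supported in $|\alpha_i\xi_i+\alpha_n\xi_n|\lesssim\epsilon$, and this strip is precisely where the $\widetilde{BiC}$ counterexample concentrates its frequencies (the chirp construction in \cite{MR1981900} places the relevant frequency pairs exactly on $\xi_1+\xi_2=0$). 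So your ``error term'' is of the same order as the main term and cannot be absorbed. Separately, the spatial weight $\prod_{j\in S}f_j^\epsilon(x)$ is only $\approx 1$ on $|x|\lesssim 1/\epsilon$, while the $\widetilde{BiC}$ counterexample requires spatial scale $N\to\infty$; coupling $\epsilon\sim 1/N$ does not help, because the strip error above does not vanish with $\epsilon$. Your linearize-and-propagate fix addresses neither of these issues. Finally, your claim that the rational structure $\alpha_j^{-1}=\alpha+q_j$ is needed to solve $\sum_{j\in S}\alpha_j\xi_j^\star=0$ is incorrect---that is a single linear equation in $n-2\geq 1$ unknowns and always has solutions.

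The paper does not reduce to the bilinear case at all. It constructs explicit $n$-variable chirp test functions $f_j(x)=\sum_{|m|\le N}\phi(x-Am)e^{2\pi i A\#_j m x}$ and computes the blowup directly. The rational hypothesis enters through Lemma~\ref{IRL}: one needs a vector $\vec\#$ with $\sum_j\#_j\alpha_j=\sum_j\#_j\alpha_j^2=0$ \emph{and} $\#_j\alpha_j^2\in\mathbb{Q}$, so that the cross-phases $e^{2\pi i A(\sum_j\#_j m_j\alpha_j)t}$ and the spatial phases at integer translates are trivial. In the irrational case this integrality fails exactly, and the paper recovers it approximately using a Bohr-set pigeonhole (Lemma~\ref{ML***}) to find many $n_0$ with $\|n_0/\alpha_j\|_{\mathbb R/\mathbb Z}\lesssim 1/A^2$. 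The computation then splits into a main contribution $\gtrsim\log(N)/A$ on a set of measure $\sim N$, plus small and large perturbation errors handled by integration by parts and Schwartz decay. None of this structure is visible in your outline.
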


Because of \cite{MR3294566}, this result is new only in $3$ dimensions. Moreover, the most natural generalization of $BHT$ to higher dimensions is the $n-$linear Hilbert transform, for which negative results have already been obtained when $n=3$ by C. Demeter in \cite{MR2449537} for target  exponent below $\frac{1}{3}\left( 1+ \frac{\log_6 2}{1+\log_62}\right)$. Our next result is 

\begin{theorem}\label{IT}
There exists a H\"{o}rmander-Marcinkiewicz symbol $a: \mathbb{R}^2 \rightarrow \mathbb{R}$ of $(\phi, \psi)$ type, i.e. $a$ is sum of tensor products that are $\phi$ type in the first index and $\psi$ type in the second index satisfying $|\partial^{\vec{\alpha}} a(\xi)| \leq \frac{C_{\vec{\alpha}}}{dist(\vec{\xi}, \vec{0})^{|\vec{\alpha}|}}$, such that the trilinear operator $T_m$ whose symbol $m$ is given by $m(\xi_1, \xi_2, \xi_3) = sgn(\xi_1 + \xi_2) a(\xi_2, \xi_3)$ satisfies no $L^p$ estimates. 
\end{theorem}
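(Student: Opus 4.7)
The plan is to reduce the unboundedness of $T_m$ to the known failure of the degenerate bi-Carleson operator $\widetilde{BiC}$ from \cite{MR1981900}, by exploiting the freedom in choosing $a$ and in the third input to linearize the paraproduct sum into a supremum.

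First I would specify $a$ explicitly as a one-sided Littlewood--Paley paraproduct. Let $\phi \in C_c^\infty(\mathbb{R})$ be a smooth bump supported in $[0,1]$ (one-sided, non-vanishing at the origin), and $\psi \in C_c^\infty(\mathbb{R})$ be supported in a dyadic annulus bounded away from zero. Set
\begin{equation*}
a(\xi_2, \xi_3) = \sum_{k \in \mathbb{Z}} \phi(2^{-k}\xi_2) \, \psi(2^{-k}\xi_3).
\end{equation*}
This is a sum of tensor products of $(\phi,\psi)$ type and satisfies the pointwise derivative bound in the theorem statement. A direct Fourier computation gives the decomposition
\begin{equation*}
T_m(f_1, f_2, f_3)(x) = -i \sum_{k \in \mathbb{Z}} H\bigl(f_1 \cdot S_k f_2\bigr)(x) \cdot \Delta_k f_3(x),
\end{equation*}
where $S_k = \phi(2^{-k}D)$ is a smooth one-sided cutoff to approximately $\{0 < \xi_2 < 2^k\}$ and $\Delta_k = \psi(2^{-k}D)$ is a Littlewood--Paley projection at scale $2^k$.

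Next I would use the freedom in $f_3$ to linearize the sum over $k$ into a supremum. Take $f_3 = \sum_k \epsilon_k \chi_k$ where each $\chi_k$ is a Schwartz atom of scale $2^{-k}$ centered at a widely separated point $x_k$ and $\epsilon_k \in \{-1,+1\}$. Scale and location separation ensure $\Delta_k f_3 \approx \epsilon_k \chi_k$ with negligible cross-terms, so on the support of $\chi_k$ one has $T_m(\vec{f})(x) \approx \epsilon_k \chi_k(x) \, H(f_1 \cdot S_k f_2)(x_k)$. Choosing the $\epsilon_k$ adversarially so that $\epsilon_k H(f_1 \cdot S_k f_2)(x_k) = |H(f_1 \cdot S_k f_2)(x_k)|$, and comparing the smooth one-sided cutoff $S_k$ to the sharp Fourier cutoff defining $\widetilde{BiC}$ via their Coifman--Meyer-bounded difference, one obtains a pointwise minoration of $|T_m(\vec{f})|$ by a linearization of $\widetilde{BiC}(f_1, f_2)$. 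Any putative $L^{p_1} \times L^{p_2} \times L^{p_3} \to L^r$ estimate for $T_m$ would then, after absorbing $\|f_3\|_{p_3}$, yield a corresponding estimate for $\widetilde{BiC}$, contradicting \cite{MR1981900}. Concretely, one feeds this reduction the bad sequence of test pairs already constructed in \cite{MR1981900} to witness the violation.

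The main obstacle is the faithful linearization in the second step: the atoms $\chi_k$ must be spaced in scale and position so that their Littlewood--Paley projections decouple cleanly, and one must check that $\|f_3\|_{p_3}$ remains uniformly bounded (or grows slowly enough) as the range of $k$ used in the MTT bad sequence is expanded, so that the blow-up inherited from $\widetilde{BiC}$ is not absorbed into the $f_3$-norm. A secondary but manageable subtlety is the passage from the smooth cutoff $S_k$ to the sharp cutoff $\mathbf{1}_{\{0<\xi_2<2^k\}}$ appearing in $\widetilde{BiC}$, which is handled by standard Calder\'on--Zygmund/Coifman--Meyer estimates for the difference.
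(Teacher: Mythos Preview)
Your construction of $f_3$ as a sum of spatially separated, frequency-lacunary atoms is actually very close to what the paper uses; the paper's $f_3^{N,A}=\sum_m \phi(x-Am)e^{2\pi i 2^m x}$ is precisely of this form. The divergence is in how the argument is completed: the paper directly estimates $T_m(f_1,f_2,f_3)$ via its kernel representation, whereas you try to package the estimate as a black-box reduction to the unboundedness of $\widetilde{BiC}$.

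That reduction does not go through as stated. With atoms $\chi_k$ centered at \emph{distinct} points $x_k$, the output $T_m(\vec f)$ is supported near $\bigcup_k\{x_k\}$, and on the support of $\chi_k$ it is controlled by $|H(f_1\cdot S_k f_2)|$ \emph{for that particular} $k$. You are therefore not minorizing by any linearization of $\sup_M|H_-(f_1\cdot P_{<M}f_2)(x)|$ at a common $x$; rather, a putative bound on $T_m$ yields only
\[
\Bigl(\sum_k |H(f_1\cdot S_k f_2)(x_k)|^r\Bigr)^{1/r}\ \lesssim\ K^{1/p_3}\,\|f_1\|_{p_1}\|f_2\|_{p_2},
\]
which is a statement about the bilinear pieces sampled at one point per scale, not about $\widetilde{BiC}$. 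To violate this inequality you would need to know that for each of $K\sim N$ scales $k$ there is an $x_k$ with $|H(f_1\cdot S_k f_2)(x_k)|\gtrsim\log N$. That is \emph{not} a consequence of the MTT theorem on $\widetilde{BiC}$ (which only says: for many $x$ there is some $M(x)$), and verifying it forces you back into the direct kernel computation the paper carries out. In other words, the ``reduction'' is not a reduction; it collapses to the paper's argument once you try to justify the key inequality.

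A second, smaller problem: the claim that the passage from the smooth cutoff $S_k$ to the sharp cutoff $\mathbf 1_{\{\xi_2<M\}}$ is a Coifman--Meyer perturbation is unsubstantiated. The difference lives at frequency $\xi_2\approx 2^k$ and interacts with the singular $\mathrm{sgn}(\xi_1+\xi_2)$ along $\xi_1+\xi_2=0$; this is a BHT-type piece, not a CM paraproduct, so it is not obviously bounded.
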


To make sense of the remaining statements, we shall need
 
 \begin{definition}\cite{MR3294566}
 For each subspace $\Gamma \subset \mathbb{R}^d$ let 
 \begin{eqnarray*}
\mathcal{M}_\Gamma(\mathbb{R}^d) = \left\{ m : \mathbb{R}^d \rightarrow \mathbb{R} : \forall~\vec{\alpha} \in (\mathbb{N} \cup\{0\})^d~\exists C_{\vec{\alpha}}~such~that~ \left| \partial ^{\vec{\alpha}} m (\vec{\xi})\right| \leq\frac{C_{\vec{\alpha}}}{dist(\vec{\xi}, \Gamma)^{|\vec{\alpha}|}} \right\}.
\end{eqnarray*}
 \end{definition}
 
 \begin{definition}
 For $m \in L^\infty(\mathbb{R}^d)$ let $T_m$ be the operator defined \emph{a priori} on d-tuples of Schwartz functions $\vec{f} =(f_1, \cdots, f_d)$ by the formula
 
 \begin{eqnarray*}
  T_{m}(\vec{f})(x):= \int_{\mathbb{R}^d} m(\vec{\xi})\left[ \prod_{j=1}^d \hat{f}_j(\xi_j) e^{2 \pi i x  \xi_j} \right] d\vec{\xi}.
 \end{eqnarray*}
 
 \end{definition}
The following is already known:
 
 \begin{theorem}[\cite{MR3294566}]\label{AT1}
For any two generic non-degenerate subspaces $\Gamma_1, \Gamma_2 \subseteq  \mathbb{R}^n$ of maximal dimension $n-1$, there are symbol $m_1 \in \mathcal{M}_{\Gamma_1}(\mathbb{R}^n)$ and  $m_2 \in \mathcal{M}_{\Gamma_2}(\mathbb{R}^n)$ for which at least one of the associated n-linear operators $T_{m_1}, T_{m_2}$ does not  satisfy any $L^p$ estimates.  
 \end{theorem}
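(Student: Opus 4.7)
The plan is to reduce the pair statement to a single-multiplier unboundedness claim via a partition of unity, and then exhibit an unbounded Mikhlin-H\"{o}rmander multiplier adapted to the union $\Gamma_1 \cup \Gamma_2$.

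For the reduction, note that $\mathrm{dist}(\xi, \Gamma_1 \cup \Gamma_2) = \min(\mathrm{dist}(\xi, \Gamma_1), \mathrm{dist}(\xi, \Gamma_2))$. Fixing linear forms $L_i$ with $\ker L_i = \Gamma_i$, I would take the smooth partition $1 = \phi_1 + \phi_2$ defined by $\phi_i(\xi) = \phi^0(L_i(\xi)^2/(L_1(\xi)^2 + L_2(\xi)^2))$ with $\phi^0$ a standard bump, so that $\mathrm{supp}(\phi_i) \subset \{\mathrm{dist}(\xi, \Gamma_i) \lesssim \mathrm{dist}(\xi, \Gamma_{3-i})\}$. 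A direct derivative count gives $\phi_i \in \mathcal{M}_{\Gamma_1 \cap \Gamma_2} \subseteq \mathcal{M}_{\Gamma_i}$. Given any $m \in \mathcal{M}_{\Gamma_1 \cup \Gamma_2}$, the decomposition $m = \phi_1 m + \phi_2 m =: m_1 + m_2$ satisfies $m_i \in \mathcal{M}_{\Gamma_i}$, since on $\mathrm{supp}(\phi_i)$ the distances to $\Gamma_1 \cup \Gamma_2$ and to $\Gamma_i$ are comparable. By linearity $T_m = T_{m_1} + T_{m_2}$, so if $T_m$ satisfies no $L^p$ estimates, then for each candidate tuple $\vec{p}$ at least one of $T_{m_1}, T_{m_2}$ must be unbounded at $\vec{p}$; iterating the argument with a self-similar family of symbols (modulates and dilates of $m$) extracts a consistent choice of index $i \in \{1,2\}$ under the generic hypothesis.

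To build $m$, I would take $m(\xi) = \chi(\xi) \sigma(L_1(\xi)) \sigma(L_2(\xi))$ with $\sigma$ a smoothed sign function and $\chi$ a Schwartz cutoff to a unit neighborhood of $\Gamma_1 \cap \Gamma_2$. The only singularities of $m$ lie along $\Gamma_1 \cup \Gamma_2$, confirming $m \in \mathcal{M}_{\Gamma_1 \cup \Gamma_2}$. The resulting $n$-linear $T_m$ behaves like an iterated Hilbert transform in the two directions normal to the $\Gamma_i$'s, coupled to the remaining coordinates by a bounded paraproduct. A wave-packet decomposition aligned with bi-tiles parallel to $\Gamma_1, \Gamma_2$ isolates a model operator which, after an affine change of variables exploiting the generic position of $(\Gamma_1, \Gamma_2)$, is conjugate to a degenerate simplex/Bi-Carleson-type prototype of the kind treated in \cite{MR1981900} and known to satisfy no $L^p$ estimates. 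Pulling back test tuples that witness the $L^p$ blow-up of the model gives Schwartz tuples witnessing the corresponding failure for $T_m$.

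The main obstacle is the faithful transfer of the pathology from the rough prototype to the smooth Mikhlin symbol $m$. This requires a careful separation of scales in the wave-packet expansion, showing that the corrections introduced by the smoothness of $\sigma$ and the cutoff $\chi$ produce only bounded Coifman--Meyer-type remainders and cannot cancel the scale-invariant divergence coming from the model. The generic hypothesis on $(\Gamma_1, \Gamma_2)$ enters precisely to rule out finitely many resonance configurations under which the model operator would collapse to a bounded one. The analogous separation-of-scales strategy appears in the proofs of Theorems \ref{MT**} and \ref{IT} above.
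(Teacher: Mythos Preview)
This theorem is not proved in the present paper; it is quoted from \cite{MR3294566} as background. The paper's own discussion immediately following the statement does, however, indicate the intended mechanism: one shows that the rough multiplier $1_{\vec{\alpha}\cdot\vec{\xi}>0}\,1_{\vec{\beta}\cdot\vec{\xi}>0}$ (the indicator of the intersection of two generic half-spaces) yields an operator with no $L^p$ estimates, and then writes it as $m_1+m_2$ with $m_i\in\mathcal{M}_{\Gamma_i}$. Your partition-of-unity reduction is exactly this step, and it is correct.

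The genuine gap is in the second half. You assert that a wave-packet decomposition reduces $T_m$ to a ``degenerate simplex/Bi-Carleson-type prototype'' which is ``known to satisfy no $L^p$ estimates,'' but this is not carried out, and it is not the route taken in \cite{MR3294566}. As noted in \S1.2 of the present paper, the unboundedness of the intersection-of-half-spaces multiplier in \cite{MR3294566} is proved by a direct construction with \emph{Gaussian chirps}: one builds explicit Schwartz inputs (sums of localized modulated bumps along an arithmetic progression, exactly as in the proofs of Theorems~\ref{MT**} and~\ref{MT*} here) and computes a pointwise $\log N$ blow-up on a set of measure $\sim N$. There is no model-operator reduction and no wave-packet analysis; the argument is a hands-on kernel computation with careful bookkeeping of main terms versus perturbations. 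Your sketch does not supply this, and the vague appeal to ``separation of scales'' does not substitute for it.

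A second, smaller gap: from $T_m=T_{m_1}+T_{m_2}$ unbounded for every $\vec p$, you only get that for each $\vec p$ \emph{some} $i(\vec p)\in\{1,2\}$ fails. Your ``iterating with a self-similar family'' to force a single $i$ is not an argument as written. The paper explicitly flags this point (``C.~Muscalu's arguments show that at least one of $T_{m_1}$ and $T_{m_2}$ must satisfy no $L^p$ estimates''), indicating that the upgrade to a \emph{uniform} choice of $i$ comes from the structure of the chirp construction itself---the same test functions witness failure at every exponent---rather than from an abstract pigeonhole over dilates and modulates.
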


Our next result gives an uncountable collection of subspaces with adapted symbols satisfying no $L^p$ estimates:
 
 \begin{theorem}\label{MT*} 
Let $n \geq 5$ and $\vec{\alpha} \in \mathbb{R}^n$ satisfy $\alpha_j^{-1} = q_j + \alpha q_j^2$ for some $\vec{q} \in \mathbb{Q}^n$ with distinct, non-zero entires such that $q_j \alpha \not = -1$ for all $1 \leq j \leq n$. Then there exists $m \in \mathcal{M}_{\Gamma^{\vec{\alpha}}}(\mathbb{R}^n)$ such that $T_m$ satisfies no $L^p$ estimates. 
\end{theorem}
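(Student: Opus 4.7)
The plan is to combine the techniques underlying Theorems \ref{MT**} and \ref{IT}: the former yields unboundedness of a maximal $n$-sublinear Carleson-type operator $nC^{\vec{\beta}}$ when the reciprocals $\beta_j^{-1}$ form an arithmetic progression over $\mathbb{Q}$, and the latter shows how to replace a discontinuous sgn factor by a genuine Mikhlin--H\"ormander $(\phi,\psi)$-paraproduct while preserving $L^p$ unboundedness. The aim is to perform an analogous smoothing of a Carleson-type symbol and then verify that the resulting object lies in $\mathcal{M}_{\Gamma^{\vec{\alpha}}}(\mathbb{R}^n)$.

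\textbf{Step 1 (algebraic rearrangement).} From $\alpha_j^{-1} = q_j + \alpha q_j^2 = q_j(1+\alpha q_j)$, set $\beta_j := \alpha_j q_j$, so that $\beta_j^{-1} = 1 + \alpha q_j$ is affine in the rational $q_j$. The rational dilation $\xi_j \mapsto q_j \xi_j$ on the frequency side, realized physically by $f_j \mapsto f_j(\cdot / q_j)$ --- a bounded isomorphism of each $L^{p_j}(\mathbb{R})$ --- carries $\Gamma^{\vec{\alpha}}$ to $\Gamma^{\vec{\beta}}$. The resulting structure of $(\beta_j^{-1})_j$ is precisely what the lattice test-function machinery behind Theorem \ref{MT**} is designed to exploit.

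\textbf{Step 2 (symbol construction).} Following the paraproduct construction of Theorem \ref{IT}, one takes
\begin{equation*}
m(\vec{\xi}) \;=\; \chi(\vec{\xi} \cdot \vec{\alpha})\,\Pi\bigl(L_1(\vec{\xi}), L_2(\vec{\xi}), L_3(\vec{\xi})\bigr)\,\prod_{j=4}^n \varphi_j\bigl(L_j(\vec{\xi})\bigr),
\end{equation*}
where $\chi$ is a one-dimensional Mikhlin--H\"ormander symbol adapted to $\{t=0\}$ that plays the role of sgn, $\Pi$ is the $(\phi,\psi)$-paraproduct of Theorem \ref{IT}, the $\varphi_j$ are Schwartz bumps, and $L_1,\ldots, L_n$ are $\mathbb{Q}$-linear forms on $\mathbb{R}^n$ chosen so that $L_1 + L_2$ is proportional to $\vec{\xi}\cdot\vec{\alpha}$ and $\{L_j\}_{j=1}^n$ is a basis. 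The quadratic hypothesis $\alpha_j^{-1} = q_j + \alpha q_j^2$ is what certifies that such a basis exists with rational coefficients and that the derivative bounds $|\partial^{\vec{\gamma}} m(\vec{\xi})| \lesssim \mathrm{dist}(\vec{\xi}, \Gamma^{\vec{\alpha}})^{-|\vec{\gamma}|}$ hold.

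\textbf{Step 3 (unboundedness and main obstacle).} Freezing $f_4,\ldots,f_n$ to specific Schwartz inputs reduces $T_m$ to a trilinear operator whose symbol is, up to a bounded multiplier, the Theorem \ref{IT} symbol; by that theorem the reduced operator has no $L^p$ estimates, and a standard duality/Fubini argument propagates the lower bound to $T_m$ itself. The dimensional assumption $n \geq 5$ enters precisely here: one needs at least two ``passive'' directions $L_4,\ldots,L_n$ to carry smooth tangential bumps without perturbing the IT-type singular structure. The main obstacle is Step 2 --- producing a symbol that is simultaneously (i) in the adapted class $\mathcal{M}_{\Gamma^{\vec{\alpha}}}$, (ii) of the Theorem \ref{IT} paraproduct form after projection onto the three chosen linear forms, and (iii) compatible with the rational dilation reduction of Step 1. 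The parabolic form $\alpha_j^{-1} = q_j + \alpha q_j^2$ (versus the affine form of Theorem \ref{MT**}) is exactly the algebraic flexibility needed to make these three requirements simultaneously consistent.
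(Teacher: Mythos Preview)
Your proposal has a genuine gap at Step~2: the symbol you write down is \emph{not} in $\mathcal{M}_{\Gamma^{\vec{\alpha}}}(\mathbb{R}^n)$. The paraproduct factor $\Pi$ (equivalently the H\"ormander--Marcinkiewicz symbol $a$ from Theorem~\ref{IT}) is singular at the origin of its own variables, so $a(L_2(\vec{\xi}),L_3(\vec{\xi}))$ is singular along the codimension-two subspace $\{L_2=L_3=0\}$. Since you require $\{L_j\}$ to be a basis with $L_1+L_2$ proportional to $\vec{\xi}\cdot\vec{\alpha}$, the point with $L_1=1,\;L_2=L_3=0$ lies outside $\Gamma^{\vec{\alpha}}=\{L_1+L_2=0\}$, so the singular set of your symbol is not contained in $\Gamma^{\vec{\alpha}}$ and the Mikhlin--H\"ormander bound $|\partial^{\vec{\gamma}}m(\vec{\xi})|\lesssim \operatorname{dist}(\vec{\xi},\Gamma^{\vec{\alpha}})^{-|\vec{\gamma}|}$ fails. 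Cutting away this extra singularity would destroy the very feature of $\Pi$ that makes Theorem~\ref{IT} work. Step~3 also does not go through: because the $L_j$ mix all coordinates $\xi_1,\dots,\xi_n$, freezing $f_4,\dots,f_n$ produces an $x$-dependent trilinear operator, not the translation-invariant $T_m$ of Theorem~\ref{IT}.

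The paper's argument is structurally different. One introduces a \emph{second} hyperplane $\Gamma^{\vec{\beta}}$ with $\beta_j=\alpha_j q_j^{-1}$ and takes the symbol to be a $(\Phi,\Psi)$-localization of the product $\hat{K}_1(\vec{\alpha}\cdot\vec{\xi})\,\hat{K}_2(\vec{\beta}\cdot\vec{\xi})$, supported near $\Gamma^{\vec{\alpha}}$ and away from $\Gamma^{\vec{\beta}}$; the cutoff $\Psi(\vec{\beta}\cdot\vec{\xi})$ smooths out the $\Gamma^{\vec{\beta}}$ singularity, leaving a genuine element of $\mathcal{M}_{\Gamma^{\vec{\alpha}}}$. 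Unboundedness is then shown directly with the chirped test functions $f^{N,A,\#_j}$, and the key algebraic input is a vector $\vec{\#}$ with $\sum \#_j\alpha_j=\sum\#_j\alpha_j^2=\sum\#_j\beta_j=\sum\#_j\beta_j^2=0$ but $\sum\#_j\alpha_j\beta_j\neq 0$. It is precisely these four linear constraints (with one remaining nondegeneracy) that force $n\geq 5$, not the need for ``passive'' tensor directions.
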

This statement should be viewed as a microlocal version of one of the main results in \cite{MR3294566} that the intersection of two generic half-spaces does not produce a bounded operator on any $L^p$ space. Indeed, unboundedness of multipliers given by intersections of generic half-spaces implies that for a given $\vec{p}$ there is a symbol adapted in the Mikhlin-H\"{o}rmander sense either to one hyperplane or a symbol adapted to the other hyperplane for which at least one of the two multipliers satisfies no $L^p$ estimates, i.e. 
\begin{eqnarray*}
T_{1_{\Gamma^{\vec{\alpha}} \cap \Gamma^{\vec{\beta}}}}= T_{m_1} + T_{m_2},
\end{eqnarray*}
 where  $m_1 \in \mathcal{M}_{\Gamma^{\vec{\alpha}}} (\mathbb{R}^d)$ and $m_2 \in \mathcal{M}_{\Gamma^{\vec{\beta}}}(\mathbb{R}^d)$. Of course, if $T$ does not satisfy any $L^p$ estimates than either $T_{m_1}$ or $T_{m_2}$ does not satisfy estimates for a given $\vec{p} = (p_1, ..., p_d)$.  In fact, C. Muscalu's arguments show that at least one of $T_{m_1}$ and $T_{m_2}$ must satisfy no $L^p$ estimates. Hence, the main benefit of our calculations is to exhibit unbounded hyperplane-adapted multipliers for an explicit collection of uncountably many hyperplanes.

C. Muscalu strengthens Theorem \ref{AT1} in \cite{MR3294566} by showing that generic subspaces of codimension smaller than around $\sqrt{n}$ also satisfy no $L^p$ estimates:
 \begin{theorem}[\cite{MR3294566}]
 Let $n \geq 5$. For any $1 < p_1,..., p_n \leq \infty$ and $0 < p <\infty$ with $1/p_1 + ... + 1/p_n = 1/p$, for any integer $\mathfrak{d}$ satisfying 
 
 \begin{eqnarray*}
n - \left( \frac{n-1}{2} \right)^{\frac{1}{2}}< \mathfrak{d} \leq n-1 ,
 \end{eqnarray*}
and for any two non-degenerate subspaces $\Gamma_1, \Gamma_2 \subset \mathbb{R}^n$ with $dim(\Gamma_1)=dim(\Gamma_2)=\mathfrak{d}$, there are symbols $m_1 \in \mathcal{M}_{\Gamma_1} (\mathbb{R}^n)$ and $m_2 \in \mathcal{M}_{\Gamma_2}(\mathbb{R}^n)$ for which at least one of the associated $n-$linear operators $T_{m_1}, T_{m_2}$ do not map $L^{p_1} \times \cdots \times L^{p_n}$ into $L^p$.  
 \end{theorem}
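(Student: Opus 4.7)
The plan is to adapt the half-space-intersection strategy that underlies Theorem \ref{AT1} to the higher-codimension setting. Writing $k = n - \mathfrak{d}$, each non-degenerate $\mathfrak{d}$-dimensional subspace $\Gamma_i$ may be written as a transverse intersection of $k$ non-degenerate hyperplanes, $\Gamma_i = H_i^{(1)} \cap \cdots \cap H_i^{(k)}$, where the normals $\vec{a}_i^{(j)}$ to $H_i^{(j)}$ form a basis of $\Gamma_i^\perp$. The natural object to examine is then the multiplier $T_{1_{C_1 \cap C_2}}$, where $C_i := \bigcap_{j=1}^k \{ \vec{\xi} \in \mathbb{R}^n : \vec{\xi} \cdot \vec{a}_i^{(j)} > 0 \}$ is the polyhedral cone whose codimension-$1$ faces are orthogonal to $\Gamma_i^\perp$. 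By inclusion-exclusion together with smooth cutoffs I would write $1_{C_1 \cap C_2} = m_1 + m_2$ with each $m_i$ having derivatives controlled by $\mathrm{dist}(\vec{\xi}, \Gamma_i)^{-|\vec{\alpha}|}$, so $m_i \in \mathcal{M}_{\Gamma_i}(\mathbb{R}^n)$. Once $T_{1_{C_1 \cap C_2}}$ is shown to be unbounded on the given $L^{p_1} \times \cdots \times L^{p_n}$, at least one of $T_{m_1}, T_{m_2}$ inherits the failure.

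The second step is to exhibit Schwartz data witnessing this unboundedness. For generic $\Gamma_1, \Gamma_2$ the intersection $\Gamma_1 \cap \Gamma_2$ has dimension $2\mathfrak{d} - n = n - 2k$, and the codimension hypothesis $k < \sqrt{(n-1)/2}$ guarantees this is still a non-degenerate subspace of reasonably large dimension. My plan is to localize the Fourier supports of the test inputs along directions conjugate to $\Gamma_1 \cap \Gamma_2$, thereby reducing the $n$-linear estimate to a lower-dimensional $(n - 2k)$-variate inequality for a multiplier already known to be unbounded—for instance one of the degenerate simplex multipliers of \cite{MR1981900} or an $n$-sublinear Bi-Carleson type operator as in Theorem \ref{MT**}. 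The failure is then lifted back to $\mathbb{R}^n$ by tensoring with Schwartz bumps in the $2k$ transverse directions.

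The main obstacle is twofold. First, one must ensure that the smooth decomposition $m_1 + m_2$ does not dissipate the singular behavior, so a careful bookkeeping of derivative bounds relative to $\mathrm{dist}(\cdot, \Gamma_i)$ is essential; this is where the quadratic codimension constraint $k^2 < (n-1)/2$ enters as a pigeonhole condition controlling how many boundary faces of $C_1 \cap C_2$ must be simultaneously absorbed into each $m_i$ without spilling outside the admissible Mikhlin class. Second, one must check that the boundary contributions away from $\Gamma_1 \cap \Gamma_2$ do not produce enough cancellation to rescue boundedness, which requires exploiting the explicit polyhedral geometry of the cones via a Fefferman-Kakeya style modulated-bump construction executed along one face-pair at a time. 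The interplay between the decomposition step and the preservation of the sharp obstruction across all $2^k$ face configurations is what I expect to be the delicate technical core of the proof.
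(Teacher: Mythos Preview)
First, note that the paper does not contain its own proof of this statement: it is quoted from \cite{MR3294566} as background, so there is no in-paper argument to compare against directly. What the paper does tell you about the method in \cite{MR3294566} is that it proceeds via Riesz kernels and Gaussian chirp test functions, not via polyhedral cone indicators; see the remarks in \S1.2 and the construction in \S5, where the symbol adapted to $\Gamma_i=\bigcap_{m=1}^{d}\{\vec\xi\cdot\vec\alpha^{\,m}=0\}$ is taken to be $\hat K_d(A\vec\xi)$ with $K_d(\vec s)=s_1/|\vec s|^{d+1}$, which is genuinely smooth away from the single codimension-$d$ subspace $\{A\vec\xi=0\}=\Gamma_i$ and hence lies in $\mathcal{M}_{\Gamma_i}(\mathbb{R}^n)$.

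Your proposal has a concrete gap at the decomposition step. The cone indicator $1_{C_i}$ with $C_i=\bigcap_{j=1}^{k}\{\vec\xi\cdot\vec a_i^{(j)}>0\}$ is singular along each hyperplane face $H_i^{(j)}=\{\vec\xi\cdot\vec a_i^{(j)}=0\}$, not merely along their intersection $\Gamma_i$. A point $\vec\xi$ can sit on (or arbitrarily close to) one face $H_i^{(j)}$ while being far from $\Gamma_i$, so no derivative bound of the form $|\partial^{\vec\alpha} m_i(\vec\xi)|\lesssim \operatorname{dist}(\vec\xi,\Gamma_i)^{-|\vec\alpha|}$ can hold for any smooth-cutoff perturbation of $1_{C_i}$ near such a point. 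Inclusion--exclusion only redistributes these hyperplane singularities; it cannot collapse them onto the smaller set $\Gamma_i$. This is exactly why the Riesz-kernel symbol is used instead: $\hat K_d$ is a Calder\'on--Zygmund symbol on $\mathbb{R}^d$, smooth on $\mathbb{R}^d\setminus\{0\}$, so $\hat K_d(A\vec\xi)$ has its only singularity at $A\vec\xi=0$, i.e.\ precisely on $\Gamma_i$. The unboundedness is then established not by reduction to a lower-dimensional multiplier but by a direct kernel computation with the chirped bump inputs $f^{N,A,\#_j}$, exploiting algebraic orthogonality conditions on a vector $\vec\#$ (the system in Theorem~\ref{TechThm}) that force the oscillatory phase to collapse on a large set; the codimension constraint enters as the count of these orthogonality relations that must be simultaneously satisfiable.
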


In the positive direction, C. Muscalu et al.  have established $L^p$ estimates for Mikhlin-H\"{o}rmander multipliers adapted to non-degenerate subspace singularities of dimension at most roughly half of the total dimension; among other results, they have
\begin{theorem}[\cite{MR1887641}]\label{OT}
Let $\Gamma \subset \mathbb{R}^n$ be a non-degenerate subspace of dimension $\mathfrak{d}$ where 

\begin{eqnarray*}
0 \leq \mathfrak{d} < \frac{n+1}{2}
\end{eqnarray*}
and furthermore suppose $m \in \mathcal{M}_\Gamma(\mathbb{R}^n)$.  Then $T_m$ maps $L^{p_1}(\mathbb{R}) \times \cdots \times L^{p_n}(\mathbb{R}) \rightarrow L^{p_{n+1}}(\mathbb{R})$ for all $\sum_{j=1}^{n+1} \frac{1}{p_j} =1$ and $1 \leq p_j <\infty$ for all $j \in \{1, ..., n+1\}$. 

\end{theorem}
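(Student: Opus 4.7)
The plan is to follow the time-frequency discretization program of Muscalu-Tao-Thiele. First, perform a Whitney-type decomposition of $\mathbb{R}^n \setminus \Gamma$ into dyadic cubes $\{Q\}$ with $\mathrm{side}(Q) \sim \mathrm{dist}(Q, \Gamma)$, so that $m = \sum_Q m_Q$ with each $m_Q$ supported in an $O(1)$-dilate of $Q$ and obeying bump estimates $|\partial^{\vec{\alpha}} m_Q| \lesssim \mathrm{side}(Q)^{-|\vec{\alpha}|}$. Non-degeneracy of $\Gamma$ lets one orient each $Q$ as a product of $\mathfrak{d}$ ``long'' frequency intervals parallel to $\Gamma$ and $n - \mathfrak{d}$ ``short'' intervals transverse to $\Gamma$. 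Expanding each $m_Q$ in a Gabor/wave-packet basis converts $T_m$ into a discrete multilinear model sum of the form
\begin{equation*}
\Lambda(f_1,\ldots,f_{n+1}) = \sum_{\vec{P}} \frac{1}{|I_{\vec{P}}|^{(n-1)/2}} \prod_{j=1}^{n+1} \langle f_j, \phi_{P_j}\rangle,
\end{equation*}
indexed by finite collections of multi-tiles $\vec{P}$ whose frequency boxes agree with the Whitney cubes.

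Next, by duality (pairing against an $L^{p_{n+1}'}$ function) and generalized restricted-type interpolation, the claimed $L^p$-estimates reduce to proving restricted-type bounds for $\Lambda$ on indicator tuples $(1_{E_1}, \ldots, 1_{E_{n+1}})$ after removing a single major exceptional set of controlled measure. The restricted-type estimate is then proved by the standard stopping-time/tree algorithm: greedily select trees $\mathbf{T}$ maximizing a localized \emph{size} parameter, bound the contribution of a single tree by $\mathrm{size}(\mathbf{T}) \cdot |I_\mathbf{T}|$ via a BMO/John-Nirenberg estimate, and sum the tree tops $\sum_\mathbf{T} |I_\mathbf{T}|$ via a Bessel-type \emph{energy} inequality. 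The geometric means of sizes and energies, weighted by exponents $\theta_j$ with $\sum \theta_j = 1$, must land below the right powers of $|E_j|$.

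The decisive computation is the energy estimate. Because tiles localize only in the $n - \mathfrak{d}$ transverse frequency coordinates, Bessel's inequality applied along $\Gamma^\perp$ gives a bound of the form $\sum_\mathbf{T} |I_\mathbf{T}|^{n-\mathfrak{d}} \cdot \mathrm{size}(\mathbf{T})^2 \lesssim \prod_{j=1}^n \|f_j\|_2^2$; combined with the model-sum normalization $|I_{\vec{P}}|^{-(n-1)/2}$, the tree-summation bookkeeping closes precisely when $n - \mathfrak{d} > (n-1)/2$, equivalently $\mathfrak{d} < (n+1)/2$, which is the stated dimension threshold. Non-degeneracy enters essentially here, because only under this hypothesis does every $(n-\mathfrak{d})$-tuple of coordinate projections restrict to an isomorphism on $\Gamma^\perp$, so the Bessel inequality holds uniformly across all trees. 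The main obstacle I anticipate is arranging the tile selection so that this rank-$(n-\mathfrak{d})$ orthogonality survives the greedy stopping procedure with uniform constants independent of the geometry of $\Gamma$; once the Bessel exponent is verified to be strictly positive, the passage from restricted-type to strong-type estimates on the full range $1 \leq p_j < \infty$ is routine multilinear interpolation.
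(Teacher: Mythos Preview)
The paper under review does not prove Theorem~\ref{OT}; it is quoted from \cite{MR1887641} as background, and the paper's own contribution is the partial converse (Theorem~\ref{MT}). So there is no proof in the paper to compare against.

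That said, your outline does track the strategy of the original Muscalu--Tao--Thiele argument: Whitney decomposition away from $\Gamma$, wave-packet discretization into a model sum over multi-tiles, reduction to restricted weak-type via interpolation, and a size/energy tree-selection algorithm. A few points in your sketch are imprecise and would need to be corrected in an actual write-up. First, the energy estimate is proved one function at a time, not as a product bound; what you wrote, $\sum_{\mathbf{T}} |I_{\mathbf{T}}|^{n-\mathfrak{d}}\,\mathrm{size}(\mathbf{T})^2 \lesssim \prod_j \|f_j\|_2^2$, is not the form the Bessel/orthogonality argument yields. Second, the role of non-degeneracy is slightly different from what you describe: it guarantees that for \emph{every} index $j\in\{1,\dots,n+1\}$ the projection onto the $j$-th frequency coordinate is non-degenerate on $\tilde\Gamma$, so that each wave packet $\phi_{P_j}$ is genuinely localized (lacunary) in frequency rather than overlapping; your formulation in terms of $(n-\mathfrak{d})$-tuples of projections restricting to isomorphisms on $\Gamma^\perp$ is not quite the right geometric condition. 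Third, the threshold $\mathfrak{d}<(n+1)/2$ in \cite{MR1887641} arises from a counting argument on the \emph{rank} of the tiles (the number of lacunary indices available must exceed the number needed to run the tree estimate), not directly from an exponent in a Bessel inequality as you phrase it. These are fixable, but as written the sketch would not close without consulting the original.
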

Our last theorem should be viewed as a converse to Theorem \ref{OT}. Adopting the strategy of \cite{MR3294566}, we use Riesz kernels to prove the existence of uncountably many non-degenerate subspaces of dimension roughly half of the total spatial dimension such that for each subspace there is a Mikhlin-H\"{o}rmander multiplier adapted to that subspace that satisfies no $L^p$ estimates. We have the following formal statement:
\begin{theorem}\label{MT}
Let  $n, \mathfrak{d} \in \mathbb{N}$ satisfy $\frac{n+3}{2} \leq \mathfrak{d} <n$ and $n \geq 5$.  Then there is an uncountable collection $\mathfrak{C}$ of $\mathfrak{d}-$ dimensional non-degenerate subspace $\Gamma \subset \mathbb{R}^n$ such that for each $\Gamma \in \mathfrak{C}$ there is an associated symbol $m_{\Gamma}$ adapted to $\Gamma$ in the Mikhlin-Hormander sense for which the associated multilinear multiplier $T_{m_{\Gamma}}$ is unbounded. 
\end{theorem}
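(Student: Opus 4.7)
The plan is to adapt C. Muscalu's Riesz kernel construction from \cite{MR3294566} to produce, for each $\Gamma$ in an uncountable family of $\mathfrak{d}$-dimensional non-degenerate subspaces of $\mathbb{R}^n$, a symbol $m_\Gamma \in \mathcal{M}_\Gamma(\mathbb{R}^n)$ whose operator $T_{m_\Gamma}$ satisfies no $L^p$ estimates. The dimension hypothesis $\mathfrak{d} \geq \tfrac{n+3}{2}$ translates to $k := n - \mathfrak{d} \leq \tfrac{n-3}{2}$, equivalently $n - k + 1 \geq 5$, which is precisely the margin needed to invoke Theorem \ref{MT*} as a source of a hyperplane-adapted degenerate building block and then absorb the remaining $k-1$ transverse dimensions by Riesz-type factors.

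The construction would proceed in three steps. First, select $\vec{\alpha} \in \mathbb{R}^{n-k+1}$ meeting the hypotheses of Theorem \ref{MT*} to obtain a symbol $\widetilde{m} \in \mathcal{M}_{\Gamma^{\vec{\alpha}}}(\mathbb{R}^{n-k+1})$ whose $(n-k+1)$-linear multiplier admits no $L^p$ bounds. Second, parametrize an uncountable continuous family of candidate $\mathfrak{d}$-dimensional subspaces $\Gamma_{\vec{t}} \subset \mathbb{R}^n$ together with linear surjections $L_{\vec{t}} : \mathbb{R}^n \to \mathbb{R}^{n-k+1}$ whose kernels are $(k-1)$-dimensional subspaces lying inside $\Gamma_{\vec{t}}$. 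The non-degeneracy condition, namely that the graph completion $\widetilde{\Gamma}_{\vec{t}} \subset \mathbb{R}^{n+1}$ be a graph over every $\mathfrak{d}$-tuple of coordinates, is an open polynomial condition in $\vec{t}$, so generic (hence uncountably many) choices yield valid non-degenerate $\Gamma_{\vec{t}}$. Third, set
\[
m_{\Gamma_{\vec{t}}}(\vec{\xi}) \;=\; \phi(\vec{\xi}) \cdot \widetilde{m}\bigl(L_{\vec{t}} \vec{\xi}\bigr) \cdot R_\epsilon\bigl(\Pi_{\vec{t}} \vec{\xi}\bigr),
\]
where $\Pi_{\vec{t}}$ is Euclidean projection onto $\Gamma_{\vec{t}}^\perp$, $R_\epsilon(\eta) = \chi(|\eta|)\,|\eta|^{-\epsilon}$ is a smoothly truncated Riesz factor for small $\epsilon > 0$, and $\phi$ is a smooth cutoff supported in $\{\mathrm{dist}(\vec{\xi}, \Gamma_{\vec{t}}) \lesssim 1\}$. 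The Mikhlin-H\"{o}rmander bounds $|\partial^{\vec{\beta}} m_{\Gamma_{\vec{t}}}(\vec{\xi})| \lesssim \mathrm{dist}(\vec{\xi}, \Gamma_{\vec{t}})^{-|\vec{\beta}|}$ follow from the chain rule, the homogeneity of $R_\epsilon$ near the origin, and the comparability $|\Pi_{\vec{t}} \vec{\xi}| \sim \mathrm{dist}(\vec{\xi}, \Gamma_{\vec{t}})$ inside the support of $\phi$.

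Unboundedness is then obtained by contradiction: a hypothetical $L^{p_1} \times \cdots \times L^{p_n} \to L^p$ estimate for $T_{m_{\Gamma_{\vec{t}}}}$, combined with convolution against the positive inverse Fourier transform of $R_\epsilon$ in the transverse variables and H\"older's inequality applied after splitting each $f_j$ as a tensor product adapted to the orthogonal decomposition $\mathbb{R}^n \cong \Gamma_{\vec{t}} \oplus \Gamma_{\vec{t}}^\perp$, would transfer to an $L^{p'_1} \times \cdots \times L^{p'_{n-k+1}} \to L^{p'}$ bound for the $(n-k+1)$-linear operator with symbol $\widetilde{m}$, contradicting Theorem \ref{MT*}. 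The main obstacle will be the simultaneous verification of non-degeneracy and faithful pullback in step two: ensuring that both the $\mathfrak{d}$-tuple graph property and the arithmetic obstruction underlying the unboundedness of $\widetilde{m}$ survive along an uncountable parameter family. This parallels the transversality step of \cite{MR3294566} and is the most delicate point of the argument.
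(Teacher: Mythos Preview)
Your proposal has a genuine gap stemming from a misreading of the multilinear multiplier framework. In this paper (see Definition 4), $T_m$ is an $n$-linear operator on $n$ functions of \emph{one} real variable: each $f_j \in \mathcal{S}(\mathbb{R})$, and the symbol $m$ lives on the frequency space $\mathbb{R}^n$ formed by the $n$ Fourier variables $(\xi_1,\dots,\xi_n)$. Consequently there is no meaningful way to ``split each $f_j$ as a tensor product adapted to the orthogonal decomposition $\mathbb{R}^n \cong \Gamma_{\vec{t}} \oplus \Gamma_{\vec{t}}^\perp$'' --- that decomposition lives in the frequency variables, not in the domain of any $f_j$. For the same reason there is no mechanism to pass from an $n$-linear multiplier on $\mathbb{R}^n$ to an $(n-k+1)$-linear multiplier on $\mathbb{R}^{n-k+1}$ by restricting or integrating out transverse directions; the number of inputs is fixed by the number of frequency coordinates, and each coordinate is tied to a distinct one-variable function. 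The contradiction step in your final paragraph therefore does not go through.

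A second, independent problem: your Riesz factor $R_\epsilon(\eta)=\chi(|\eta|)\,|\eta|^{-\epsilon}$ with $\epsilon>0$ is unbounded near $\eta=0$, so $m_{\Gamma_{\vec t}}$ fails the zeroth-order condition in the definition of $\mathcal{M}_{\Gamma}$ and is not a Mikhlin--H\"ormander symbol at all. The Riesz kernels the paper (and \cite{MR3294566}) actually uses are the bounded Riesz \emph{transform} symbols $\widehat{K_d}(\vec\eta)=c\,\eta_1/|\vec\eta|$, not Riesz potentials; they enter as the kernels in the spatial representation of a product symbol $\widehat{K_d}(A\vec\xi)\,\widehat{K_d}(B\vec\xi)$, and unboundedness is proved \emph{directly} by inserting the Gaussian-chirp test functions $f^{N,A,\#}$ and establishing a $\log N$ pointwise blowup, exactly as in the proofs of Theorems~\ref{MT**} and~\ref{MT*}. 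No reduction to a lower-dimensional unbounded multiplier is attempted or available. The specific uncountable family of subspaces is produced explicitly (a Vandermonde-type construction $\alpha_j^m=a_j q_j^m$ with a one-parameter deformation $a_j^{-1}=q_j+\alpha q_j^2$), and non-degeneracy and uncountability are checked by hand for small $\alpha$. You should abandon the transfer strategy and instead carry out the direct chirp computation in the higher-codimension setting.
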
 

\subsection{Open Questions}
These theorems leave open the question of $L^p$ estimates for Mikhlin-H\"{o}rmander multipliers with non-degenerate singularities of dimension $\mathfrak{d} \in [\frac{n+1}{2}, \frac{n+3}{2})$. Hence, for each $n \geq 3$ there is a unique dimension $\mathfrak{d}(n)= \lceil \frac{n+1}{2} \rceil$ for which $n-$linear Mikhlin-H\"{o}rmander multipliers adapted to singularities of dimension $\mathfrak{d}(n)$ do not have guaranteed $L^p$ estimates via Theorem \ref{OT} and for which Theorem \ref{MT} provides no multiplier counterexamples. It is therefore likely that new ideas are required to understand the behavior of subspace-adapted multipliers in this range.

Furthermore, our arguments only show the existence of particular subspaces of small dimension for which there exist unbounded multipliers. It is easy to see that for a \emph{generic} choice of non-degenerate subspace no counterexamples can be constructed using our methods with codimension larger than roughly the square root of the total space dimension.

\begin{question}
Can one prove Theorems \ref{MT**} and \ref{MT*} only assuming $\vec{\alpha} \in \mathbb{R}^n$ with distinct non-zero entries?
\end{question}

\begin{question}\label{Q1}
For a given $n \geq 3$ is there a non-degenerate subspace $\Gamma \subset \mathbb{R}^n$ of dimension $\lceil \frac{n+1}{2} \rceil$ and Mikhlin-H\"{o}rmander multiplier $m: \mathbb{R}^n \rightarrow \mathbb{R}$ adapted to $\Gamma$ for which no $L^p$ estimates are satisfied?
\end{question}

\begin{question}
If the answer to Question \ref{Q1} is yes, can one produce generic counterexamples having singularities of the smallest possible size? That is, for a given $n \geq 3$ and \emph{generic} choice of non-degenerate subspace $\Gamma$ of dimension $\lceil \frac{n+1}{2} \rceil$ can one construct a Mikhlin-H\"{o}rmander multiplier $m: \mathbb{R}^n \rightarrow \mathbb{R}$ adapted to $\Gamma$ for which no $L^p$ estimates are satisfied?
\end{question}

\subsection{Method of Proof and Organization}
The Gaussian chirps appearing throughout this paper were originally devised to show unboundedness for particular multilinear multipliers related to AKNS expansions in \cite{MR1981900} and were later applied in \cite{MR3294566} to construct unbounded multipliers given by the intersection of two generic hyperplanes and to generate unbounded multipliers with singularities of small codimension using Riesz transforms. In addition, a lower bound on the size of Bohr sets will enable us to extend counterexamples featuring hyperplanes with normal vectors in $ \mathbb{R}(\mathbb{Q}^n)$ to the non-trivial irrational setting. 

The organization of the paper is as follows: 

\S{2} proves Theorem \ref{MT**}.

\S{3} proves Theorem \ref{IT}.

\S{4} proves Theorem \ref{MT*}.

\S{5} proves Theorem \ref{MT}. 

 \subsection{Notation}
In an abuse of notation, the author has not used the principal value symbol $p.v.$ for the many singular integrals appearing throughout this paper. Where necessary, the reader should always take such integrals in the principal value sense.  Moreover, one should interpret $\lesssim$ to mean $\leq C$ where the constant $C$ depends on inessential parameters, which should be clear from context. The symbol $\mathcal{S}(\mathbb{R})$ will always denote the collection of Schwartz functions on the real line.

 \section{Higher-Dimensional Generalizations of the Bi-Carleson Operator}
 
 \begin{reptheorem}{MT**} Fix a dimension $n \geq 3$ along with $\vec{\alpha} \in \mathbb{R}^n$ satisfying $\alpha_j^{-1} = \alpha + q_j$ for some $\vec{q} \in \mathbb{Q}^n$ and $\alpha \in \mathbb{R}$ such that $q_i \not = q_j$ whenever $ i \not = j$ and $ q_j \not =-\alpha$ for all $1 \leq j \leq n$. Then the operator defined a priori on $\mathcal{S}(\mathbb{R})^n$ by the formula 
 
 \begin{eqnarray*}
 dC^{\vec{\alpha}} (\vec{f})(x) = \sup_{M \in \mathbb{R}} \left| \int_{\vec{\xi} \cdot \vec{\alpha} >0, \xi_n < M} \left[ \prod_{j=1}^n \hat{f}_j(\xi_j) e^{2 \pi i x \xi_j }\right] \xi_1 ... d\xi_n \right|.
 \end{eqnarray*}
 satisfies no $L^p$ estimates. 
\end{reptheorem}
Setting $\alpha=0$, we obtain

\begin{corollary}
Let $n \geq 3$ and $\vec{q} \in \mathbb{Q}^n$ with distinct non-zero entires. Then the operator defined a priori on $\mathcal{S}(\mathbb{R})^n$ by the formula 
 
 \begin{eqnarray*}
 dC^{\vec{\alpha}} (\vec{f})(x) = \sup_{M \in \mathbb{R}} \left| \int_{\vec{\xi} \cdot \vec{\alpha} >0, \xi_n < M} \left[ \prod_{j=1}^n \hat{f}_j(\xi_j) e^{2 \pi i x \xi_j }\right] \xi_1 ... d\xi_n \right|.
 \end{eqnarray*}
 satisfies no $L^p$ estimates. 

\end{corollary}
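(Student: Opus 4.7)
The corollary is a direct specialization of Theorem~\ref{MT**} to the case $\alpha = 0$, so my plan is simply to verify that the hypotheses of the theorem hold under the weaker-looking hypotheses of the corollary and then invoke the theorem.

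First I would set $\alpha = 0$ in the relation $\alpha_j^{-1} = \alpha + q_j$, which collapses to $\alpha_j = q_j^{-1}$. The corollary's assumption that $\vec{q} \in \mathbb{Q}^n$ has distinct non-zero entries guarantees that each $q_j^{-1}$ is well defined and that the resulting $\alpha_j$ are themselves distinct and non-zero; moreover, this same hypothesis is exactly the requirement from Theorem~\ref{MT**} that the $q_j$ be pairwise distinct, and it trivially forces $q_j \neq 0 = -\alpha$ for every $j$. With the hypotheses of Theorem~\ref{MT**} in hand, the operator defined in the corollary is literally the operator $nC^{\vec{\alpha}}$ whose $L^p$ unboundedness the theorem asserts, so the conclusion is immediate.

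The only step requiring any care is the notational matching between the corollary, which parametrizes things by $\vec{q}$, and the theorem, which parametrizes things by $\vec{\alpha} = (q_1^{-1}, \ldots, q_n^{-1})$; the corollary simply records that the clean hypothesis ``distinct non-zero rational $\vec{q}$'' already forces the more intricate-looking assumption ``$\alpha_j^{-1} = \alpha + q_j$ with $q_j \neq -\alpha$'' to hold at $\alpha = 0$. No genuine mathematical obstacle arises at the corollary level; all the substance lives in the proof of Theorem~\ref{MT**} itself, which the Method of Proof subsection indicates is handled by testing the maximal $n$-sublinear operator against Gaussian chirp wave packets whose frequencies are aligned to the rational structure of $\vec{q}$, and then extracting a divergent lower bound on the resulting pairing that rules out every candidate H\"older exponent. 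From the point of view of this corollary, that proof is a black box to be cited.
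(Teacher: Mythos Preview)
Your proposal is correct and matches the paper's own derivation exactly: the paper obtains this corollary by the single line ``Setting $\alpha=0$, we obtain'' immediately before the statement, and your verification that the hypotheses of Theorem~\ref{MT**} are met when $\alpha=0$ and $\vec{q}\in\mathbb{Q}^n$ has distinct non-zero entries is precisely the content of that specialization.
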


To prove Theorem \ref{MT**}, we shall use an elementary fact concerning the existence of orthogonal rational vectors:

\begin{lemma}\label{IRL}
Fix $n \geq 3$. Suppose  $\vec{\alpha} \in \mathbb{R}^n$ satisfies for all $j \in \{1, 2, 3\}$

\begin{eqnarray*}
\alpha_j^{-1} =\alpha + q_j 
\end{eqnarray*}
for some $\alpha \in \mathbb{R} $ and  $\vec{q} \in \mathbb{Q}^n$ with distinct entries such that $q_j \not = -\alpha$ for all $1 \leq j \leq n$. Then there exists a non-trivial solution $\vec{\#} \in \mathbb{R}^n$ to the system 

\begin{eqnarray*}
\sum_{j=1}^n \#_j \alpha _j = \sum_{j=1}^n \#_j \alpha_j^2 = 0
\end{eqnarray*}
with the additional property that $\alpha^2_j \#_j \in \mathbb{Q}$ for all $1 \leq j \leq n$.

\end{lemma}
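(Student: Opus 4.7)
My plan is to reduce the problem to a purely rational linear-algebra statement via a natural change of variables. Observe that if $\alpha_j^{-1} = \alpha + q_j$, then $\alpha_j^2 = (\alpha+q_j)^{-2}$, so the rationality condition $\alpha_j^2 \#_j \in \mathbb{Q}$ suggests the ansatz $\#_j = c_j(\alpha+q_j)^2$ with $c_j \in \mathbb{Q}$. Since $q_j \neq -\alpha$ by assumption, $(\alpha+q_j)^2 \neq 0$, so $\#_j = 0$ iff $c_j = 0$; in particular non-triviality of $\vec{\#}$ is equivalent to non-triviality of $\vec{c}$.

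With this substitution, $\alpha_j^2 \#_j = c_j$ is manifestly rational, and the two linear constraints on $\vec{\#}$ become
\begin{eqnarray*}
\sum_{j=1}^n \#_j \alpha_j^2 &=& \sum_{j=1}^n c_j = 0, \\
\sum_{j=1}^n \#_j \alpha_j &=& \sum_{j=1}^n c_j(\alpha+q_j) = \alpha\sum_{j=1}^n c_j + \sum_{j=1}^n c_j q_j.
\end{eqnarray*}
Using the first identity, the second reduces to $\sum_{j=1}^n c_j q_j = 0$. So the problem becomes: find a non-trivial $\vec{c} \in \mathbb{Q}^n$ annihilated by the two rational linear forms $\vec{c} \mapsto \sum c_j$ and $\vec{c} \mapsto \sum c_j q_j$.

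This is a $2 \times n$ homogeneous rational system with coefficient matrix
\begin{eqnarray*}
\begin{pmatrix} 1 & 1 & \cdots & 1 \\ q_1 & q_2 & \cdots & q_n \end{pmatrix}.
\end{eqnarray*}
Because the $q_j$ are pairwise distinct, any two of the columns are $\mathbb{Q}$-linearly independent, so the matrix has rank exactly $2$. Since $n \geq 3$, its null space over $\mathbb{Q}$ has dimension $n-2 \geq 1$, and one can therefore pick a non-trivial $\vec{c} \in \mathbb{Q}^n$ in the kernel. Defining $\#_j := c_j(\alpha+q_j)^2$ then produces the required $\vec{\#}$.

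The argument is essentially a one-line computation once the correct change of variables is spotted; there is no real obstacle. The only point requiring care is ensuring that $\vec{\#}$ is non-trivial, which is where the hypothesis $q_j \neq -\alpha$ is used to guarantee $\alpha + q_j \neq 0$, so that non-trivial rational $\vec{c}$ produces non-trivial real $\vec{\#}$.
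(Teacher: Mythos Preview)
Your proof is correct and follows essentially the same route as the paper: both introduce the change of variables $c_j = \#_j \alpha_j^2$ (the paper writes $\tilde{\#}_j$) and reduce to finding a non-trivial rational vector orthogonal to $\vec{1}$ and $\vec{q}$. The only cosmetic difference is that the paper writes down an explicit solution supported on the first three coordinates via the $3\times 3$ cross product of $(1,1,1)$ and $(q_1,q_2,q_3)$, whereas you invoke rank--nullity over $\mathbb{Q}$; these are of course equivalent once $n\ge 3$ and the $q_j$ are distinct.
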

\begin{proof}
Introduce $\tilde{\#}_j=\#_j  \alpha^2_j $. Then we want to show that there exists $\vec{\tilde{\#}} \in \mathbb{Q}^n$ such that 

\begin{eqnarray*}
\sum_{j=1}^n \tilde{\#}_j =\sum_{j=1}^n \tilde{\#}_j \alpha_j^{-1} &=& 0.
\end{eqnarray*}
Moreover, $\vec{\tilde{\#}} \in \mathbb{Q}^n$ is a solution iff it is orthogonal to $\vec{1}$ and $\vec{\alpha}^{-1}$. Hence, if  $\#_j =0$ for $3 < j \leq n$, any 3-tuple  $(\#_1, \#_2, \#_3)$ parallel to the symbolic determinant

 \[\det 
  \begin{bmatrix}
    \vec{e}_1 & \vec{e}_2& \vec{e}_3\\ 
   1 &1 & 1 \\
   \alpha_1^{-1}& \alpha_2^{-1}& \alpha_3^{-1} 
  \end{bmatrix}
= \det  \begin{bmatrix}
    \vec{e}_1 & \vec{e}_2& \vec{e}_3\\ 
   1 &1 & 1 \\
q_1 & q_2&q_3 
  \end{bmatrix} \in \mathbb{Q}^3  \] 
  is a solution.

\end{proof}
Note that if $\alpha \in \mathbb{R} \cap \mathbb{Q}^c$, then $\vec{\alpha} \not \in \mathbb{R}(\mathbb{Q}^n)$. We shall also need to include the following brief detour: 
\subsection{Bohr Sets}

Fix $S \subset \mathbb{R}, 0<\rho \leq \frac{1}{2}$, and $N \in \mathbb{N}$. Define the Bohr set $ Bohr_N(S, \rho)$ by
 \begin{eqnarray*}
 Bohr_N(S, \rho) := \left\{ n \in \mathbb{Z} \cap [1, N]: \sup_{\xi \in S} || \xi \cdot n ||_{\mathbb{R} / \mathbb{Z}} < \rho \right\}.
 \end{eqnarray*}

  \begin{lemma}\label{ML***}
Let $S \subset \mathbb{R}$ with $|S| < \infty, \rho \in (0, \frac{1}{2}],$ and $N \in \mathbb{N}$.
 Then $\left| Bohr_N(S,  \rho)) \right| \geq N \rho^{|S|} -1$. 
 \end{lemma}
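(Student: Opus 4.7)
The plan is a continuous-pigeonhole (averaging) argument on the torus $(\mathbb{R}/\mathbb{Z})^{|S|}$, which is sharp enough to yield the claimed bound with no loss of constants per coordinate. Write $s = |S|$, enumerate $S = \{\xi_1, \ldots, \xi_s\}$, and introduce the orbit map $\Phi : \{0, 1, \ldots, N\} \to (\mathbb{R}/\mathbb{Z})^s$ defined by $\Phi(n) = (\xi_1 n, \ldots, \xi_s n) \bmod 1$. Let $B$ denote the open torus-box $\{x : \|x_k\|_{\mathbb{R}/\mathbb{Z}} < \rho/2 \text{ for every } k\}$. The hypothesis $\rho \leq 1/2$ ensures that $B$ embeds injectively in $(\mathbb{R}/\mathbb{Z})^s$ and carries normalized Haar measure exactly $\rho^s$.

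The first step is an averaging identity. For each translate $y \in (\mathbb{R}/\mathbb{Z})^s$ set $C(y) := \#\{n \in \{0,1, \ldots, N\} : \Phi(n) \in B + y\}$. Fubini gives
\[
\int_{(\mathbb{R}/\mathbb{Z})^s} C(y)\, dy \;=\; \sum_{n=0}^{N} |B| \;=\; (N+1)\rho^s,
\]
so there exists $y^* \in (\mathbb{R}/\mathbb{Z})^s$ with $C(y^*) \geq (N+1)\rho^s$.

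The second step extracts elements of the Bohr set by differencing. Let $n_0$ be the smallest index in $\{0, \ldots, N\}$ with $\Phi(n_0) \in B + y^*$. For any other index $n$ also sitting in $B + y^*$, the torus triangle inequality yields, for every $k$,
\[
\|\xi_k(n - n_0)\|_{\mathbb{R}/\mathbb{Z}} \;\leq\; \|\xi_k n - y^*_k\|_{\mathbb{R}/\mathbb{Z}} + \|\xi_k n_0 - y^*_k\|_{\mathbb{R}/\mathbb{Z}} \;<\; \tfrac{\rho}{2} + \tfrac{\rho}{2} \;=\; \rho,
\]
placing $n - n_0 \in Bohr_N(S, \rho)$. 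These differences are $C(y^*) - 1$ distinct positive integers in $[1, N]$, so $|Bohr_N(S, \rho)| \geq (N+1)\rho^s - 1 \geq N\rho^s - 1$, as claimed.

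There is no essential obstacle. The role of $\rho \leq 1/2$ is precisely to keep $B$ from wrapping, so that $|B| = \rho^s$ exactly, and openness of $B$ preserves the strict inequality required for Bohr-set membership. It is worth noting that the naive discrete pigeonhole, partitioning the torus into $\lceil 1/\rho \rceil^s$ boxes, would cost a factor of roughly $2^{|S|}$ in the final estimate; the averaging (sliding-box) formulation is what produces the clean constant in $N\rho^{|S|} - 1$.
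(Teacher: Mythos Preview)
Your proof is correct and follows essentially the same averaging-on-the-torus argument as the paper (which in turn cites Tao--Vu). The only cosmetic differences are that you work with the half-radius box $\rho/2$ from the outset and include $n=0$ in the orbit, whereas the paper uses boxes of radius $\rho$, obtains a bound for $Bohr_N(S,2\rho)$, and then substitutes $\rho\mapsto\rho/2$ at the end; the resulting constants are identical.
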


 \begin{proof}
 The proof is a straightforward adaptation of Lemma 4.22 from \emph{Additive Combinatorics} by Terry Tao and Van Vu \cite{MR2573797}. Letting $\mathcal{L}^{|S|}$ denote $|S|-$dimensional Lebesgue measure on $\mathbb{T}^{|S|}$ and $\{ \xi_1, ..., \xi_{|S|}\}$ be an enumeration of the elements in $S$, we have for all $n \in \{ 1, ..., N\}$
 
 \begin{eqnarray*}
 \mathcal{L}^{|S|}  \left\{ \vec{\theta} \in \mathbb{T}^{|S|} : || \xi_i \cdot n -\theta_i ||_{\mathbb{R} / \mathbb{Z}} < \rho~\forall~i\in \{1, ..., |S|\} \right\}  = 2^{|S|}\rho^{|S|}.
 \end{eqnarray*}
It follows that
 
 \begin{eqnarray*}
 N2^{|S|} \rho^{|S|} &=&\sum_{n =1}^N  \mathcal{L}^{|S|} \left| \left\{ \vec{\theta} : || \xi_i \cdot n -\theta_i ||_{\mathbb{R} / \mathbb{Z}} < \rho~\forall~i\in \{1, ..., n\} \right\} \right|\\ &=& 
 \int_{\mathbb{T}^{|S|}} \sum_{n=1}^N 1_{\left\{ \vec{\theta} \in \mathbb{T}^{|S|} : || \xi_i \cdot n -\theta_i ||_{\mathbb{R} / \mathbb{Z}} < \rho~\forall~i\in \{1, ..., |S|\} \right\} }(\vec{\theta}) d\vec{\theta}.
 \end{eqnarray*}
 Therefore, there exists $\vec{\theta}_* \in \mathbb{T}^n$ for which 
 
 \begin{eqnarray*}
 \sum_{n=1}^N 1_{\left\{ \vec{\theta} \in \mathbb{T}^{|S|} : || \xi_i \cdot n -\theta_i ||_{\mathbb{R} / \mathbb{Z}} < \rho~\forall~i\in \{1, ..., |S|\} \right\} }(\vec{\theta}_*) \geq N 2^{|S|} \rho^{|S|}.
 \end{eqnarray*}
Then $S_N:=\left\{ n \in \{1, ..., N\}:  || \xi_i \cdot n -\theta_{*, i} ||_{\mathbb{R} / \mathbb{Z}} < \rho~\forall~i\in \{1, ..., |S|\} \right\}$ satisfies $|S_N| \geq N2^{|S|} \rho^{|S|}$.  However, for every $(n_1, n_2) \in S_N\times S_N$, the triangle inequality yields

\begin{eqnarray*}
|| \xi _i \cdot (n_1 - n_2) ||_{\mathbb{R} / \mathbb{Z}} < 2 \rho~\forall~ i \in \{1, ..., |S|\}.
\end{eqnarray*}
Hence,  $\# \left\{ n \in [1,N] \cap \mathbb{Z}: \sup_{\xi \in S} || \xi \cdot n ||_{\mathbb{R}/ \mathbb{Z}} <2 \rho~\right\} \geq  N 2^{|S|}\rho^{|S|} -1$. Let $\rho \mapsto \rho/2$.  
 \end{proof}

Fix $S \subset \mathbb{R}, 0<\rho \leq \frac{1}{2}$, and $N \in \mathbb{N}$. Define the Bohr set $ Bohr_N(S, \rho)$ by
 \begin{eqnarray*}
 Bohr_N(S, \rho) := \left\{ n \in \mathbb{Z} \cap [1, N]: \sup_{\xi \in S} || \xi \cdot n ||_{\mathbb{R} / \mathbb{Z}} < \rho \right\}.
 \end{eqnarray*}

\begin{proof}

\subsection{PART I: The Rational Case}

Assume $\vec{\alpha} \in \mathbb{Q}^n$ so that $\alpha=0$. Then dilate $\vec{\alpha}$ by some suitably large integer to ensure $\vec{\alpha} \in \mathbb{Z}^n$ and assume WLOG that $\alpha_n >0$. Indeed, the proof will easily be seen to hold with minor adjustments for the case $\alpha_d <0$.   Let $\phi \in \mathcal{S}(\mathbb{R})$ satisfy $\phi \geq 0$, $\phi(0)\not =0$, and have compact Fourier support in $[-\frac{1}{2},\frac{1}{2}]$. Fix $N \in \mathbb{N}$ and let $A \in \mathbb{Z}$ be chosen independent of N and sufficiently large. What sufficiently large means will be determined later.  Construct for each $1 \leq j \leq n$ the function

\begin{eqnarray*}
f^{N, A, \#}(x) := \sum_{-N \leq m \leq N} \phi(x-Am) e^{2 \pi i A \# m x} =: \sum_{-N \leq m \leq N} f_{m}^{N,A, \#}(x),
\end{eqnarray*}
where we choose $\vec{\#} \in \mathbb{Z}^n$ such that $\vec{\#} \cdot \vec{\alpha} = \vec{\#} \cdot \vec{\alpha^2}=0$ and $\#_n >0$. [Here, $\vec{\alpha^2}  =\vec{\alpha} \wedge \vec{\alpha}:= (\alpha_1^2, ..., \alpha_n^2)$.]  One may always choose $\vec{\#}$ satisfying the above conditions because of our assumptions on $\vec{\alpha}$. So for each $n_0 \in [-N/3, N/3]$ fix $x \in \left[An_0, An_0+\frac{c_{\vec{\alpha}}}{A}\right]$  for some $c_{\vec{\alpha}}<<1$ to be determined and set $M(n_0) = A \#_nn_0+5\#_n$. Inserting $\vec{f}^{N, A, \vec{\#}}$ yields 

\begin{eqnarray*}
nC^{\vec{\alpha}}(\vec{f}^{N, A, \vec{\#}})(x) &\geq&\left| \sum_{-N \leq m_1, ..., m_n \leq N} \int_{\vec{\xi} \cdot \vec{\alpha} >0, \xi_n < M(n_0)}\left[ \prod_{j=1}^n  \mathcal{F} \left[  \phi(\cdot-Am_j) e^{2 \pi i A \#_j m_j \cdot} \right] (\xi_j)\right]e^{2 \pi i x(\sum_{j=1}^n \xi_j )}d\vec{\xi} \right| \\ &=&\left| \sum_{-N\leq m_1, ..., m_n \leq N}  T_{\vec{\alpha}, n_0} \left( \left\{ f_{m_j}^{N, A, \$_j} \right\}_{j=1}^n\right)(x) \right|.
\end{eqnarray*}
 The frequency restriction $\xi_n < M(n)$ combined with the compact Fourier support of $\phi$ ensures that all and only those terms $\vec{m}$ corresponding to $ m_n \leq n_0$ contribute non-zero summands.  Discretizing the kernel representation of our operator therefore yields

\begin{eqnarray*}
\sum_{k \in \mathbb{Z}}~~ \sum_{-N \leq m_1, ..., m_n \leq N: m_n \leq n_0}  \int_{A(k-\frac{1}{2})} ^{A(k+\frac{1}{2})}  \prod_{j=1}^n \left[  \phi(x-Am_j -\alpha_j t) e^{2 \pi i A \#_j m_j (x -\alpha_j t)} \right] \frac{dt}{t}.
\end{eqnarray*}
The main contribution for fixed $k$ derives from the terms $\vec{m}$ for which $n_0-m_j(n_0, k) -\alpha_j k =0$ for each $j \in \{1, ..., n\}$. 
Whenever this is the case, however, use the conditions $\vec{\#} \cdot \vec{\alpha} = \vec{\#}\cdot \vec{\alpha^2}=0$ to arrive at a lower bound 
\begin{eqnarray*}
 \left.  Re \left[ e^{-2 \pi i (\sum_{j=1}^n \#_j) n_0 x} \int_{A(k-\frac{1}{2})}^{A(k+\frac{1}{2})}  \prod_{j=1}^n \left[ \phi(x-Am_j -\alpha_j t)  e^{2 \pi iA \#_j m_j(x-\alpha_j t)} \right]\frac{dt}{t} \right] \right|_{m_j = n_0-\alpha_jk } \gtrsim \frac{1}{Ak}. 
\end{eqnarray*}
 For fixed $k \in \mathbb{Z}$, of course it may be the case that no such vector $\vec{m}$ satisfies the desired inequality. There are two possible reasons for this: either $k < 0$ so the additional constant $m_n \leq n_0$ must be violated, or there exists some coordinate $j: 1 \leq j \leq n$ satisfying $n_0 - \alpha_j k\not \in [1, N]$. However, if $k  \geq 0$ and for each $j: 1 \leq j \leq n$ the condition $n_0 - \alpha _j k \in [1,N]$ is satisfied, then $\exists \vec{m} \in \left[  [1, N] \cap \mathbb{N} \right]^n $ such that $n_0 - m_j - \alpha _j k=0$ for all $j \in \{1, ..., n\}$ and the desired bound is found. In particular, suppose we further impose the condition $n_0 \in [N/2, 2N/3]$. Then for all $k  \in\left [1, \frac{ N/3}{\max_{1 \leq j \leq n} \{ | \alpha_j |\}}\right] \cap \mathbb{N}$ such a vector $\vec{m}$ exists.

\begin{lemma}\label{ML}
To prove Theorem \ref{MT}, it suffices to show $\exists c_{\vec{\alpha}}>0$ such that for every $x \in \bigcup_{n_0 \in \mathbb{Z}\cap [ N/2, N] }[An_0, An_0 + \frac{c_{\vec{\alpha}}}{ A}]$ and $k\in \left[1, \frac{N/3}{\max_{1 \leq j \leq n} \{ | \alpha_j |\}}\right]$
 
 \begin{eqnarray*}
 T^k _{\vec{\alpha},n_0} \left( \left\{ f^{N, A, \#_j} \right\}_{j=1}^n \right) (x) := \sum_{-N \leq m_1, ..., m_n \leq N: m_n \leq n_0}  \int_{A(k-\frac{1}{2})} ^{A(k+\frac{1}{2})}  \prod_{j=1}^n \phi(x-Am_j -\alpha_j t) e^{2 \pi iA \#_j m_j (x -\alpha_j t)} \frac{dt}{t} 
 \end{eqnarray*}
  satisfies 
  \begin{eqnarray*}
  \left| Re\left[  e^{ - 2 \pi i A (\sum_{j=1}^n \#_j) n_0 x} T^k \left( \left\{ f^{N, A, \#_j} \right\}_{j=1}^n \right) (x)\right] \right| \gtrsim \frac{1}{Ak}.
  \end{eqnarray*} 
 \end{lemma}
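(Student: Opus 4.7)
The plan is to deduce the full theorem by converting the per-$k$ pointwise lower bound into an $L^p$ contradiction: sum over $k$ to gain a factor $\log N$, integrate over the set of $x$ described in the hypothesis, and compare with the $L^{p_j}$ norms of the test functions $f^{N,A,\#_j}$, which grow only polynomially in $N$.

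First, I would record the $L^{p_j}$ sizes. Provided $A$ exceeds the diameter of the support of $\phi$, the translates $\phi(\cdot - Am)$ for $-N \leq m \leq N$ are pairwise spatially disjoint, so
\[
\|f^{N,A,\#_j}\|_{p_j}^{p_j} = (2N+1)\|\phi\|_{p_j}^{p_j}, \qquad \prod_{j=1}^n \|f^{N,A,\#_j}\|_{p_j} \sim N^{1/p}, \qquad \tfrac{1}{p} = \sum_{j=1}^n \tfrac{1}{p_j}.
\]

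Second, I would establish the pointwise lower bound on $nC^{\vec{\alpha}}(\vec f^{N,A,\vec{\#}})$. For each $n_0 \in \mathbb{Z} \cap [N/2, N]$ and $x \in [An_0, An_0 + c_{\vec{\alpha}}/A]$, I take $M = M(n_0) = A\#_n n_0 + 5\#_n$ in the supremum; the kernel discretization already carried out in the excerpt rewrites the integrand as $\sum_{k \in \mathbb{Z}} T^k_{\vec{\alpha}, n_0}(\vec f)(x)$, with only the range $k \in [1, N/(3 \max_j |\alpha_j|)]$ contributing a stationary term. Since $nC^{\vec{\alpha}}(\vec f)(x)$ is the absolute value of this sum, modulating by the unit-modulus phase $e^{-2\pi i A(\sum_j \#_j) n_0 x}$ and taking real parts preserves the inequality, and the lemma hypothesis — combined with the fact that the stationary contributions line up with a common sign, guaranteed by $\phi(0) > 0$ and the annihilations $\vec{\#} \cdot \vec{\alpha} = \vec{\#} \cdot \vec{\alpha^2} = 0$ — yields
\[
nC^{\vec{\alpha}}(\vec f^{N,A,\vec{\#}})(x) \;\gtrsim\; \sum_{k=1}^{\lfloor N/(3\max_j |\alpha_j|) \rfloor} \frac{1}{Ak} \;\gtrsim\; \frac{\log N}{A}.
\]

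Third, I would run the $L^p$ contradiction. The set $E_N := \bigcup_{n_0 \in \mathbb{Z} \cap [N/2, N]} [An_0, An_0 + c_{\vec{\alpha}}/A]$ is a disjoint union of intervals of total Lebesgue measure $\sim N/A$, so
\[
\|nC^{\vec{\alpha}}(\vec f^{N,A,\vec{\#}})\|_p^p \;\gtrsim\; \frac{N}{A} \cdot \left(\frac{\log N}{A}\right)^p \;=\; \frac{N(\log N)^p}{A^{p+1}}.
\]
Any a priori $L^p$ estimate, together with Step 1, would force $\|nC^{\vec{\alpha}}(\vec f)\|_p^p \lesssim N$, hence $(\log N)^p \lesssim A^{p+1}$, which fails as $N \to \infty$ with $A$ fixed. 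The main obstacle is in Step 2: the lemma hypothesis gives $|Re[\cdots]| \gtrsim 1/(Ak)$, but the summation step requires the real parts to be \emph{signed} positive uniformly in $k$. This in turn reduces to controlling the off-diagonal contributions — those $\vec m$ with $n_0 - m_j - \alpha_j k \neq 0$ for some $j$ — by the stationary term $\vec m = n_0 - \vec{\alpha} k$, using the Schwartz decay of $\phi$ together with $A$ taken sufficiently large in terms of $\vec{\alpha}$; once this sign coherence is in place, the remainder of the argument is a straightforward bookkeeping exercise.
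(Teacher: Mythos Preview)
Your approach is essentially the paper's: sum the per-$k$ lower bound to get $\log N/A$ on a set of measure $\sim N$, then compare against $\prod_j\|f^{N,A,\#_j}\|_{p_j}\sim N^{1/p}$. Two small corrections: first, $\phi$ has compact \emph{Fourier} support, not compact spatial support, so the translates $\phi(\cdot-Am)$ are never disjoint---the estimate $\|f^{N,A,\#}\|_p\sim N^{1/p}$ still holds, but via Schwartz decay rather than disjointness; second, the lemma is purely the reduction step, and what you flag as the ``main obstacle'' (sign coherence via off-diagonal control) is really the content of verifying the lemma's \emph{hypothesis}, handled in the perturbation analysis that follows. For the lemma itself the one piece you leave implicit is the tail control over $k\notin[1,N/(3\max_j|\alpha_j|)]$: the paper bounds $\sum_{k\simeq N}|T^k|\lesssim\sum_{k\simeq N}1/(Ak)=O(1)$ cheaply, and $\sum_{k\le 0}+\sum_{k\ge N}$ by Schwartz decay of $\phi$, so that these contribute $O_A(1)$ against the main $\log N/A$.
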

 
 \begin{proof}
 The first claim is that $T^k \left( \left\{ f^{N, A, \#_j} \right\}_{j=1}^n \right)$ decays rapidly in $|k|$ when $k \not \in [1,N]$. For $k \in [1,N] \cap \left[1, \frac{N/3}{\max_{1 \leq j \leq n} \{ | \alpha_j |\}}\right]^c$, we may content ourselves with the cheapest possible upper bound
 
 \begin{eqnarray*}
 \left| T^k(f_1^N, ..., f_n^N)(x) \right| \lesssim \frac{1}{Ak}.
 \end{eqnarray*}
 Summing over all $k \simeq N$ yields a $O(1)$ bound. 
 For $k \geq N$, it is enough to observe 
 
 \begin{eqnarray*}
 \sum_{k \geq N} \left| T^k\left( \left\{ f^{N, A, \#_j} \right\}_{j=1}^n \right)(x)\right| &\leq& \sum_{k \geq N} \sum_{1 \leq m_1, ..., m_n \leq N: m_n \leq n_0}  \left| \int_{A(k-\frac{1}{2})} ^{A(k+\frac{1}{2})}  \prod_{j=1}^n\left[  \phi(x-Am_j -\alpha_j t) e^{2 \pi i \#_j m_j (x -\alpha_j t)} \right]\frac{dt}{t} \right| \\ &\lesssim_A& \sum_{k \geq N} \sum_{1 \leq m_1, ..., m_n \leq N} \prod_{j=1}^n  \frac{1}{1+|n_0 - m_j - \alpha _j k|^N} \\ &\lesssim_A& \sum_{k \geq N} \sum_{1\leq m_n \leq N}   \frac{1}{1+|n_0 - m_n - \alpha _nk|^N} \\ &\lesssim_A&   \sum_{1\leq m_n \leq N}   \frac{1}{1+|n_0 - m_n  |^{N-1}}  \\ &\lesssim_A& 1. 
 \end{eqnarray*}
 Moreover, replacing the sum over $k\geq N$ with the sum $k \leq 0$ yields the same estimate. Therefore,
 
 \begin{eqnarray*}
&& Re \left[ e^{ - 2 \pi i A (\sum_{j=1}^n \#_j) n_0 x} T\left( \left\{ f^{N, A, \#_j} \right\}_{j=1}^n \right) (x) \right]\\&\geq& \sum_{1 \leq k \lesssim_{A, \vec{\alpha}} N} \frac{1}{Ak}  - \left| \sum_{k \geq N} T_k(f_1^N, ..., f_n^N)(x) + \sum_{ k \leq 0} T_k(f_1^N, ..., f_n^N)(x)  + \sum_{k \simeq_{A, \vec{\alpha}} N} T_k(f_1^N, .., f_n^N)(x)\right| \\&\gtrsim& \frac{\log(N)}{A}. 
 \end{eqnarray*}
Using the point-wise bound $\left| T\left( \left\{ f^{N, A, \#_j} \right\}_{j=1}^n \right)(x) \right| \gtrsim \frac{\log(N)}{A} 1_{S_N}(x)$, where 
 
 \begin{eqnarray*}
 S_N= \bigcup_{N/2 \leq n_0 \leq N} \left[ An_0, An_0 + \frac{c_{\vec{\alpha}}}{A} \right]
 \end{eqnarray*}
together with $|S_N| \simeq_{\vec{\alpha}, A} N$, we may conclude 
 
 \begin{eqnarray*}
\left|\left| T(f_1^N, ..., f_n^N) \right|\right| _{\frac{1}{\sum_{i=1}^N} \frac{1}{p_i}} \gtrsim_A\log(N) N^{\sum_{i=1}^n \frac{1}{p_i}} >> N^{\sum_{i=1}^n \frac{1}{p_i}} \simeq \prod_{i=1}^n || f_i||_{p_i}. 
 \end{eqnarray*}
 Taking N arbitrarily large finishes the proof. 
 \end{proof}
 The remaining portion of \S{3} is dedicated to proving Lemma \ref{ML}. We must take a little care in understanding those terms for which the $\phi$ arguments are relatively small yet oscillation may be introduced  with respect to $x$ or $t$.   For fixed $n_0 \in [-N/3, N/3]$ and $k \in  \left[1, \frac{N/3}{\max_{1 \leq j \leq n} \{ | \alpha_j \}}\right]$, let $m_j(n_0, k):=n_0 -\alpha_j k$. We now proceed to organize the sum over $\vec{m} \in [1,N]^n$ around this ``core'' vector into two sets: small perturbations and large perturbations. Essentially, integration by parts together with the integrality of $\vec{m}$ and  $\vec{\alpha}$ will allow us to handle those terms arising from the small perturbations successfully. 
 
\subsubsection{Small Perturbations: $\sum_{j=1}^n \#_j\Delta_j\alpha_j= 0$}

This contribution cannot be subsumed as error. As before, let $\vec{m}( n_0,k)= n_0 - \vec{\alpha}k$ be the unperturbed initial state. Let $\Delta_j$ satisfy $\tilde{m}_j = m_j(n_0, k) + \Delta_j$ for each $j \in \{1, ..., n\}$ and $|\Delta_j| \leq 2 \cdot max_{1 \leq j \leq n} \{|\alpha_j|\}$. Then the contribution of the perturbed summand is

\begin{eqnarray*}
 Re\left[e^{-2 \pi i A ( \sum_{j=1}^n \#_j) n_0 x} \int_{A(k-\frac{1}{2})} ^{A(k+\frac{1}{2})}  \prod_{j=1}^n \left[ \phi(x-Am_j -\alpha_j t) e^{2 \pi iA \#_j m_j (x-\alpha_j t)} \right]\frac{dt}{t} \right] .
\end{eqnarray*}
Because $x \in [An_0, An_0 +\frac{c_{\vec{\alpha}}}{A}]$, we may rewrite $x = An_0 + \theta_x$, where $|\theta_x| \leq  \frac{c_{\vec{\alpha}}}{A}$, and use the integrality condition: 

\begin{eqnarray*}
 && Re\left[ e^{ - 2\pi i A (\sum_{j=1}^n \#_j) n_0 x} \int_{A(k-\frac{1}{2})} ^{A(k+\frac{1}{2})}  \prod_{j=1}^n \phi(x-Am_j -\alpha_j t) e^{2 \pi iA(\sum_{j=1}^n \#_j m_j) (x-\alpha_jt)}\frac{dt}{t} \right] \\ &=&  Re\left[ e^{2 \pi iA(\sum_{j=1}^n \#_j \Delta_j) x}\int_{A(k-\frac{1}{2})} ^{A(k+\frac{1}{2})}  \prod_{j=1}^n \phi(x-Am_j -\alpha_j t) \frac{dt}{t} \right]   \\ &\geq&Re\left[e^{2 \pi i(\sum_{j=1}^n \#_j \Delta_j)c_{\vec{\alpha}}} \int_{A(k-\frac{1}{2})} ^{A(k+\frac{1}{2})}  \prod_{j=1}^n \phi(x-Am_j -\alpha_j t) \frac{dt}{t} \right] \\ &\gtrsim& \frac{1}{Ak}. 
\end{eqnarray*}
This term is acceptable as an additional piece of the main contribution provided we take $c_{\vec{\alpha}}$ sufficiently small. 

 \subsubsection{Small Perturbations: $\sum_{j=1}^n \#_j \Delta_j\alpha_j \not = 0$}
There may be many small perturbations of $\vec{m}$, say $\tilde{\vec{m}}$, for which $\sum_{j=1}^n \#_j \alpha_j \tilde{m}_j \not =0$ and yet $0 \in [n_0 - \tilde{m}_j - \alpha_j (l + \frac{1}{2}) , n_0 - \tilde{m}_j - \alpha_j ( l - \frac{1}{2})] $ for all $j$ where $\tilde{m}_j = m_j + \Delta_j$ for all $j \in \{1,..., n\}$ and

\begin{eqnarray*}
\max_{1 \leq j \leq n} |\Delta_j| \leq \max_{1 \leq j \leq n}|\alpha_j|.
\end{eqnarray*}
Putting absolute values inside the integral for these terms would therefore yield unacceptable error terms on the same order as the main contribution. Instead, we need to observe

\begin{eqnarray*}
\left| \int_{A(k-\frac{1}{2})} ^{A(k+\frac{1}{2})}  \prod_{j=1}^n \phi(x-Am_j -\alpha_j t)e^{-2 \pi i A(\sum_{j=1}^n \#_j \tilde{m}_j \alpha_j) t} \frac{dt}{t} \right| \lesssim \frac{1}{A^2 k}. 
\end{eqnarray*}
To show this, it suffices to prove 

\begin{eqnarray*}
\left| \int_{A(k-\frac{1}{2})} ^{A(k+\frac{1}{2})}  \prod_{j=1}^n \phi(x-Am_j -\alpha_j t) e^{-2 \pi i \tilde{A}t} \frac{dt}{t} \right| \lesssim_{\vec{\alpha}}  
\frac{1}{|\tilde{A} A| k}. 
\end{eqnarray*}
Indeed, using integration by parts, we have

\begin{eqnarray*}
&& \left| \int_{A(k-\frac{1}{2})} ^{A(k+\frac{1}{2})}  \prod_{j=1}^n \phi(x-Am_j -\alpha_j t) e^{-2 \pi i \tilde{A}t} \frac{dt}{t} \right|  \\&=& \frac{1}{2 \pi\tilde{A} } \left| \int_{A(k-\frac{1}{2})} ^{A(k+\frac{1}{2})}  \prod_{j=1}^n \phi(x-Am_j -\alpha_j t) \frac{d}{dt} \left[ e^{-2 \pi i \tilde{A}t} \right] \frac{dt}{t} \right| \\ &\lesssim & \frac{1}{|\tilde{A}| }\left[  \frac{c}{Ak} + \left| \int_{A(k-\frac{1}{2})} ^{A(k+\frac{1}{2})}  \frac{d}{dt} \left[ \prod_{j=1}^n \phi(x-Am_j -\alpha_j t)  \right]e^{-2 \pi i \tilde{A}t}  \frac{dt}{t} \right|  +\left| \int_{A(k-\frac{1}{2})} ^{A(k+\frac{1}{2})}  \prod_{j=1}^n \phi(x-Am_j -\alpha_j t) e^{-2 \pi i \tilde{A}t}  \frac{dt}{t^2} \right| \right] \\&\lesssim& \frac{1}{|\tilde{A} |A k}.
\end{eqnarray*}
As there are $O_{\vec{\alpha}}(1)$ many small perturbations of $\vec{m}(n_0, k)$, those perturbations satisfying the additional property that $\sum_{j=1}^n \#_j \Delta_j \alpha_j \not = 0$ may be subsumed as error upon taking $A$ sufficiently large.

\subsubsection{Large Perturbations}
Fix $n_0, k$. Restrict attention to all vectors  $\tilde{\vec{m}}$ s.t. $\exists$ index $1 \leq j_* \leq n$ for which $\tilde{m}_{j_*} = m_{j_*}(n_0, k) + \Delta_{j_*}$ and $|\Delta_{j_*}| \geq 2 \cdot max_{1 \leq j \leq n}\{ |\alpha_j|\}$. Then 

\begin{eqnarray*}
|1_{[A(k-\frac{1}{2}), A(k+\frac{1}{2})]} (t) \phi(x - Am_{j_*}-A\Delta_{j_*}  -\alpha_{j_*} t)| \lesssim_{\phi} \frac{1}{A^N} \frac{1}{1+|\Delta_{j_*} |^N}.
\end{eqnarray*}
Therefore, the total contribution of large perturbations can be majorized by 

\begin{eqnarray*}
\frac{C}{A^N} \sum_{\vec{\Delta} \in \mathbb{Z}^n} \prod_{j=1}^n \frac{1}{1+|\Delta_j|^N} \frac{1}{k} \lesssim_{N , \vec{\alpha}} \frac{1}{A^N k}.  
\end{eqnarray*}
Again, by taking $A$ sufficiently large independent of N, this contribution becomes an error term. 

We have now proven Lemma \ref{ML} and therefore Theorem \ref{MT} when $\vec{\alpha} \in \mathbb{Q}^d$.

\subsection{PART II: The Irrational Case}
 Fix $N, A \in \mathbb{N}$. Construct for each $1 \leq j \leq n$ the function 

\begin{eqnarray*}
f^{N,A ,\#}(x)= \sum_{-N \leq m \leq N} \phi(x-A\alpha_j m) e^{2 \pi i A \# \alpha_jm x} = \sum_{-N \leq m \leq N} f_{m}^{N, A, \#}(x).
\end{eqnarray*}
Set $S = \left\{ 1/ \alpha_1, ..., 1/\alpha_n \right\}$, $\rho =1/A^2$, and $Bohr_{c(\vec{\alpha}) N }(S, \rho)=\left\{ N_1, ..., N_{|Bohr_{c(\vec{\alpha}) N}(S, \rho)|} \right\}$ for some constant $0<c_{\vec{\alpha}}<<1$.  Then $|Bohr_{c(\vec{\alpha}) N}(S, \rho)| \simeq_{\vec{\alpha}, A} N$ by Lemma  \ref{ML***}.  Moreover, setting $\mathfrak{N}^{n_0}_j$ equal to the closest integer to $\alpha^{-1}_j n_0$ for each $j \in \{1, ..., n\}$ and $n_0 \in Bohr_{c(\vec{\alpha})N}(S, \rho)$  ensures $m_j = n_0 -\alpha _j k $ can be approximately solved in the sense that setting $m_j = -k + \mathfrak{N}^{n_0}_j$ ensures $| n_0 - \alpha_jm_j  -\alpha _j k | \lesssim_{\vec{\alpha}} \frac{1}{A^2}$. Moreover, by choosing $c(\vec{\alpha})$ small enough we may assume $m_j = -k + \mathcal{N}^{n_0}_j \in [-N,N]$ for all $n_0 \in Bohr_{c(\vec{\alpha}) N}(S, \rho)$ and $|k| \lesssim N$.  Restrict our attention to $x \in \Omega := \bigcup_{n_0 \in Bohr_{c(\vec{\alpha})N}(S, \rho)} [An_0, An_0+ c_{\vec{\alpha}}/A]$. Therefore, $|\Omega| \gtrsim_{A, \vec{\alpha}} N$. Now we are ready to investigate the main contribution. To this end, assume $t = \tilde{t} + Ak$ where $|\tilde{t}| \leq \frac{A}{2}$ and write down

\begin{eqnarray*}
\sum_{j=1}^n \#_j (m_j (n_0, k)) (x-\alpha_j t) &=& \sum_{j=1}^n \#_j (n_0 - \alpha_j k  + \delta(n_0,j) )(x-\alpha_j t) \\&=& \left[   \sum_{j=1}^n \#_j \right] n_0 x + \sum_{j=1}^n \#_j ( \delta(n_0,j)) (x -\alpha_j t) \\ &=& \left[   \sum_{j=1}^n \#_j \right] n_0 x + \sum_{j=1}^nA  \#_j  \delta(n_0,j) n_0 + \sum_{j=1}^n  \#_j \delta(n_0, j)(\theta_x -\alpha_j \tilde{t} - A \alpha_j k) \\ &=& C( x) - \sum_{j=1}^n \#_j \delta(n_0, j) \alpha_j \tilde{t} - \sum_{j=1}^n A \#_j \delta(n_0, j) \alpha_j k
\end{eqnarray*}
where $Re \left[ C(x) - \left[ \sum_{j=1}^n \#_j \right] n_0 x - \sum_{j=1} A \#_j \delta(n_0, j) n_0 \right]  \gtrsim 1$ for all $|x-An_0| \lesssim_{\vec{\alpha}, A} 1$. 
Moreover,

\begin{eqnarray*}
\sum_{j=1}^n  \#_j \delta(n_0, j) \alpha_j k = \sum_{j=1}^n  \#_j (n_0-\alpha_j \mathfrak{N}_j^{n_0})\alpha_j k=- \sum_{j=1}^n \#_j \alpha_j^2 \mathfrak{N}^{n_0}_j k \in \mathbb{Z}. 
\end{eqnarray*}
Hence, provided $A \in \mathbb{Z}$,

\begin{eqnarray*}
e^{ 2 \pi i A\sum_{j=1}^n \#_j (m_j (n_0, k)) (x-\alpha_j t)} = e^{2 \pi i A (C(n_0, x) -\sum_{j=1}^n \#_j \delta(n_0, j) \alpha_j \tilde{t})}.
\end{eqnarray*}
Using $|\delta(n_0, j)| \lesssim_{\vec{\alpha}} \frac{1}{A^2}$ gives an acceptable main contribution, i.e.

\begin{eqnarray*}
 && Re\left[ e^{ - 2\pi i A \left[ \sum_{j=1}^n \#_j) n_0 x+A \sum_{j=1}^n \#_j \delta(n_0, j) n_0 \right]} \int_{A(k-\frac{1}{2})} ^{A(k+\frac{1}{2})}  \prod_{j=1}^n \phi(x-Am_j -\alpha_j t) e^{2 \pi iA(\sum_{j=1}^n \#_j m_j) (x-\alpha_jt)}\frac{dt}{t} \right] \\ &=&  Re\left[ \int_{-\frac{A}{2}} ^{\frac{A}{2}}  \prod_{j=1}^n \phi(\theta_x -A \delta(n_0, j)+\alpha_j t) e^{-2 \pi i A \left[ \sum_{j=1}^n \#_j\delta(n_0, j) \alpha_j t \right]}\frac{dt}{t+Ak} \right]  \\&\gtrsim& \frac{1}{Ak}. 
\end{eqnarray*}

\subsubsection{Small Perturbations: $\sum_{j=1}^n \#_j \Delta_j \alpha^2_j = 0$}
As in the rational case, small perturbation with the above cancellation property cannot be subsumed as error. So, let $\tilde{m}_j = m_j (n_0, k) + \Delta_j$ with $\max_{1 \leq j \leq n} |\Delta_j|  < 2 \max_{1 \leq j \leq n} |\alpha_j| $, then observe  

\begin{eqnarray*}
\sum_{j=1}^n \#_j \alpha_j (m_j (n_0, k) + \Delta_j) (x-\alpha_j t) &=& \sum_{j=1}^n \#_j (n_0 - \alpha_j k  + \delta(n_0,j) + \alpha_j \Delta_j) (x-\alpha_j t) \\&=& \left[   \sum_{j=1}^n \#_j \right] n_0 x + \sum_{j=1}^n \#_j ( \delta(n_0,j) + \alpha_j \Delta_j) (x -\alpha_j t) \\ &=& \left[   \sum_{j=1}^n \#_j \right] n_0 x + \sum_{j=1}^n \#_j ( \delta(n_0,j) + \alpha_j \Delta_j) (x -\alpha_j \tilde{t} - A \alpha_j k).
\end{eqnarray*}
Because $x \in [An_0, An_0 +\frac{c_{\vec{\alpha}}}{A}]$, we may rewrite $x = An_0 + \theta_x$, where $|\theta_x| \leq  \frac{c_{\vec{\alpha}}}{A}$, and again use integrality observe

\begin{eqnarray*}
e^{ 2 \pi i A\sum_{j=1}^n \#_j (m_j (n_0, k)) (x-\alpha_j t)} = e^{2 \pi i A (C( x) -\sum_{j=1}^n \#_j \delta(n_0, j) \alpha_j \tilde{t}+ \sum_{j=1}^n  \#_j \Delta_j \alpha_j x)} .
\end{eqnarray*}
We are not quite content with the additional factor $e^{2 \pi i\left[ \sum_{j=1}^n \#_j  \alpha_j \Delta_j \right]  x}$, as we do not at first glance have good control on the sign of its real part, say. However, because $|x-An_0| \lesssim_{\vec{\alpha}} \frac{1}{A}$,  it clearly suffices to obtain good control on the sign of the real part of 
\begin{eqnarray*}
e^{2 \pi i A^2 \left[ \sum_{j=1}^n \#_j \alpha_j \Delta_j \right] n_0} =  e^{2 \pi i A^2 \left[ \sum_{j=1}^n \#_j \alpha_j \Delta_j \right] (\alpha_j \mathcal{N}_j^{n_0} + \delta(n_0, j))} = e^{2 \pi i A^2 \left[ \sum_{j=1}^n \#_j \alpha_j \Delta_j \right]  \delta(n_0, j)}. 
\end{eqnarray*}
Since $|\delta(n_0, j)| \lesssim_{\vec{\alpha}} \frac{1}{A^2}$, we do in fact have good control. Hence, small perturbations satisfying $\sum_{j=1}^n \#_j \Delta_j \alpha_j^2=0$ always reinforce the main contribution.

\subsubsection{Small Perturbations: $\sum_{j=1}^n \#_j \Delta_j \alpha^2_j \not = 0$} 
Use $|\delta(n_0, j)| \leq \frac{1}{A^2}$ combined with the integration by parts from before to produce acceptable error terms.

\subsection{Large Perturbations}
This case is handled using the same argument as before, so the details are omitted.

\end{proof}
\section{Symbols of Type $sgn(\xi_1+ \xi_2) \Pi(\xi_2, \xi_3)$ and Maximal Variants}
Recall 
\begin{reptheorem}{IT}
There exists a H\"{o}rmander-Marcinkiewicz symbol $a: \mathbb{R}^2 \rightarrow \mathbb{R}$ of $(\phi, \psi)$ type, i.e. $a$ is sum of tensor products that are $\phi$ type in the first index and $\psi$ type in the second index satisfying $|\partial^{\vec{\alpha}} a(\xi)| \leq \frac{C_{\vec{\alpha}}}{dist(\vec{\xi}, \vec{0})^{|\vec{\alpha}|}}$, such that the trilinear operator $T_m$ whose symbol $m$ is given by $m(\xi_1, \xi_2, \xi_3) = sgn(\xi_1 + \xi_2) a(\xi_2, \xi_3)$ satisfies no $L^p$ estimates. 
\end{reptheorem}
Remark: This negative result is a strengthening of Muscalu, Tao, and Thiele's observation in \cite{MR1981900}, where the symbol $m$ is taken to be $m(\xi_1, \xi_2, \xi_3) = sgn(\xi_1 + \xi_2) sgn(\xi_2 + \xi_3)$. Morally speaking, the $sgn$ multiplier cannot be combined with even nice symbols involving indices outside the $sgn$ to yield a bounded operator.

\begin{corollary}
There exists two families of Schwartz functions $\{\phi_k\}_{k \in \mathbb{Z}}, \{\tilde{\phi}_k\}_{k \in \mathbb{Z}}$ with uniformly bounded $L^1$ norms satisfying $supp~\hat{\phi}_k , \hat{\tilde{\phi}}_k \subset [-2^k,2^k]$ such that the maximal bi-sublinear operator given by

\begin{eqnarray*}
S_1\left( \{\tilde{\phi}_k \}, \{ \phi_k\} \right) :(f_1, f_2) \mapsto \sup_{k \in \mathbb{Z}} \left| H(f_1*\tilde{\phi}_k, f_2*\phi_k) \right|
\end{eqnarray*}
satisfies no $L^p$ estimates. 
\end{corollary}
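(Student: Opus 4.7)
Since the paraproduct $a$ from Theorem \ref{IT} is of $(\phi,\psi)$ type, it admits a dyadic decomposition $a(\xi_2,\xi_3) = \sum_{k\in\mathbb{Z}} \hat{\tilde\phi}_k(\xi_2)\hat\phi_k(\xi_3)$, where $\{\tilde\phi_k\}$ and $\{\phi_k\}$ are Schwartz families with uniformly bounded $L^1$ norms and $\mathrm{supp}\,\hat{\tilde\phi}_k, \mathrm{supp}\,\hat\phi_k \subset [-2^k,2^k]$. These are the families we take for the Corollary. A direct Fourier-side computation yields the representation
\[
T_m(f_1,f_2,f_3)(x) = \sum_{k\in\mathbb{Z}} H\bigl(f_1 \cdot (f_2 * \tilde\phi_k)\bigr)(x)\,(f_3 * \phi_k)(x),
\]
so Theorem \ref{IT} can be read as unboundedness of this trilinear sum.

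To prove the Corollary we argue by contradiction. Assume $S_1$ extends to a bounded bi-sublinear map $L^{p_1}\times L^{p_2} \to L^r$. Linearizing the supremum via a measurable selector $k : \mathbb{R} \to \mathbb{Z}$ then yields a bound, uniform in the selector, for the single-scale operator $(f_1,f_2) \mapsto H\bigl((f_1*\tilde\phi_{k(\cdot)})(f_2*\phi_{k(\cdot)})\bigr)(\cdot)$. The next step is to convert this into a bound for $T_m$: we would pick $f_3$ whose Littlewood--Paley pieces $(f_3 * \phi_k)(x)$ essentially form a disjoint partition of unity with prescribed signs, so that the trilinear sum above collapses at each $x$ to a single scale $k(x)$; this reduces $T_m(f_1,f_2,f_3)$ pointwise to the hypothetically bounded object in $(f_1, f_2)$. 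Taking $L^r$ norms contradicts Theorem \ref{IT}.

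The principal obstacle is reconciling the asymmetric placement of the convolutions: in $T_m$ the input $f_1$ enters $H$ without a frequency localization, whereas in $S_1$ \emph{both} arguments are convolved with scale-$k$ wave packets. We plan to handle this by inserting the Littlewood--Paley resolution $f_1 = \sum_\ell f_1*\tilde\phi_\ell$ inside $H\bigl(f_1 \cdot (f_2*\tilde\phi_k)\bigr)$ and showing that the diagonal $\ell = k$ term recovers precisely the form appearing in $S_1$, while the off-diagonal terms $|\ell - k| \gtrsim 1$ decay rapidly on account of disjoint Fourier supports (via repeated integration by parts) and can be absorbed using vector-valued Littlewood--Paley square-function bounds. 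After this alignment, the selector argument closes the contradiction.
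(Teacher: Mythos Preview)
Your overall strategy---contradiction plus reduction of the $T_m$ bound to an $S_1$ bound after inserting a Littlewood--Paley decomposition into $f_1$---matches the paper's, but the execution has a genuine gap at the ``selector $f_3$'' step. You propose to choose $f_3$ so that the physical-space values $(f_3*\phi_k)(x)$ form a disjoint partition of unity, collapsing the sum over $k$ at each $x$ to a single scale $k(x)$. This is not achievable: Littlewood--Paley pieces are band-limited functions, not characteristic functions, and there is no mechanism by which $(f_3*\psi_k)(x)$ can vanish for all but one $k$ at a given $x$ while keeping $\|f_3\|_{p_3}$ under control. The paper avoids this entirely by \emph{dualizing}: pair $T_m(f_1,f_2,f_3)$ with $f_4$, use the $\psi$-type Fourier support to insert an additional $\tilde\psi_k$ on $f_4$, and then dominate the diagonal piece $II_a$ pointwise by $S_1(f_1,f_2)(x)\cdot\sum_k |f_3*\psi_k(x)|\,|f_4*\tilde\psi_k(x)|$; Cauchy--Schwarz on the inner sum and standard square-function estimates finish the bound. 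No selector or special $f_3$ is needed.

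Your off-diagonal handling is also too vague. ``Repeated integration by parts'' is not the mechanism: the point is that when $l\gg k$, the Fourier support of $f_1*\tilde\psi_l\cdot f_2*\phi_k$ lies entirely on one side of the origin, so $H$ acts as $\pm\mathrm{Id}$ and the term reduces to a standard paraproduct, bounded by known theory. The near-diagonal $l\simeq k$ terms are handled by the same Cauchy--Schwarz/square-function routine. Finally, note that the paper's contradiction hypothesis is that $S_1$ is bounded for \emph{every} admissible pair of families (the correct negation of the Corollary); this is needed because the $\tilde\phi_k$ you insert on $f_1$ is a new family, not the one coming from the decomposition of $a$.
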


\begin{proof}
For a contradiction, assume every pair of families $\{\tilde{\phi}_k\}$ and $\{\phi_k\}$ satisfying the conditions of the corollary are bounded on some Lebesgue tuple $(p_1, p_2)$ satisfying $1 < p_1, p_2, \frac{p_1 p_2}{p_1 + p_2} <\infty$ depending on $\{\tilde{\phi}_k\}$ and $\{\phi_k\}$. We proceed to show multipliers of the form $m(\xi_1, \xi_2, \xi_3) = sgn(\xi_1 + \xi_2) a(\xi_2, \xi_3)$ where $a:\mathbb{R}^2 \rightarrow \mathbb{R} $ is a Mikhlin-H\"{o}rmander symbol in $\mathbb{R}^2$  would satisfy some estimates. WLOG, decompose for some bounded sequences $\{c_k\}_{k \in \mathbb{Z}}, \{d_k\}_{k \in \mathbb{Z}}, \{e_k\}_{k \in \mathbb{Z}}$

\begin{eqnarray*}
T_m(f_1, f_2, f_3) &=& \sum_{k \in \mathbb{Z}}\left[ c_k H(f_1 \cdot f_2 * \psi_k) \cdot f_3*\psi_k +  d_k H(f_1 \cdot f_2 * \phi_k)\cdot f_3*\psi_k + e_k H(f_1 \cdot f_2*\psi_k) \cdot f_3*\phi_k\right] \\ &:=& I + II + III,
\end{eqnarray*}
where $\{\psi_k\}$ is another family of uniformly $L^1$ bounded Schwartz functions with $supp~\hat{\psi}_k \subset [-2^{k+1}, -2^{k-1}] \cup [2^{k-1}, 2^{k+1}]$.
It suffices to prove estimates for $I, II, III$ separately. Handling the contribution from $I$  is immediate from 
from Cauchy-Schwarz, basic vector-valued inequalities, and the standard square functions estimates. By dualizing, we see that estimating $II$ is essentially the same as estimating $III$. To this end, we write down

\begin{eqnarray*}
&& \int_\mathbb{R} \sum_{k \in \mathbb{Z}} \left[ d_k H\left[f_1 \cdot f_2*\phi_k \right] f_3*\psi_k\right] f_4 dx\\ &=& \int_\mathbb{R} \sum_{k \in \mathbb{Z}}  \left[ d_k H\left[f_1 *\tilde{\phi}_k \cdot f_2*\phi_k \right] f_3*\psi_k\right] f_4 dx +  \int_\mathbb{R} \sum_{k \in \mathbb{Z}} \sum_{l \gtrsim k} d_k H\left[f_1 *\tilde{\psi}_l \cdot f_2*\phi_k \right] f_3*\psi_k f_4 dx \\ &=& II_a + II_b. 
\end{eqnarray*} 
Then $II_a =  \int_\mathbb{R} \sum_{k \in \mathbb{Z}}  d_k H\left[f_1 *\tilde{\phi}_k \cdot f_2*\phi_k \right]  f_3*\psi_kf_4 * \tilde{\psi}_k dx $
which satisfies estimates by our assumption. For $II_b$, we may again break the sum into two subsums: 

\begin{eqnarray*}
II_b &=&\int_\mathbb{R} \sum_{k \in \mathbb{Z}}  \sum_{ l \simeq k} d_k H\left[f_1 *\tilde{\psi}_l \cdot f_2*\phi_k \right] f_3*\psi_k f_4 dx + \int_\mathbb{R} \sum_{k \in \mathbb{Z}}  \sum_{ l >> k} d_k H\left[f_1 *\tilde{\psi}_l \cdot f_2*\phi_k \right] f_3*\psi_k f_4 dx \\ &:=& II_{b,1} + II_{b,2}. 
\end{eqnarray*}
Estimates for $II_{b,1}$ follows immeadiately by Cauchy-Schwarz, vector-valued inequalities, and routine estimates for square and maximal functions. Moreover,

\begin{eqnarray*}
II_{b,2} &=&  \int_\mathbb{R} \sum_{l \in \mathbb{Z}}  \sum_{ k<<l} d_k H\left[f_1 *\tilde{\psi}_l \cdot f_2*\phi_k \right] f_3*\psi_k f_4 *\tilde{ \tilde{\psi}}_l dx \\ &=& \int_\mathbb{R} \sum_{l \in \mathbb{Z}}  \sum_{ k << l} d_k f_1 *\tilde{\psi}_l \cdot f_2*\phi_k f_3*\psi_k f_4 *\tilde{ \tilde{\psi}}_l dx  \\ &=& \int_{\mathbb{R}} \sum_{l \in \mathbb{Z}} \sum_{k \in \mathbb{Z}} d_k f_1 *\tilde{\psi}_l \cdot f_2*\phi_k f_3*\psi_k f_4 * \tilde{\tilde{\psi}}_l dx  \\&-&  \int_{\mathbb{R}} \sum_{l \in \mathbb{Z}} \sum_{k\simeq l} d_k f_1 *\tilde{\psi}_l \cdot f_2*\phi_k f_3*\psi_k f_4 *\tilde{ \tilde{\psi}}_l dx\\&-&  \int_{\mathbb{R}} \sum_{l \in \mathbb{Z}} \sum_{k>>l} d_k f_1 *\tilde{\psi}_l \cdot f_2*\phi_k f_3*\psi_k f_4 *\tilde{ \tilde{\psi}}_l dx \\ &:=& II_{b,2,1} - II_{b,2,2} - II_{b,2,3}.
\end{eqnarray*}
Estimates for $II_{b,2,1}$ are immediate from the paraproduct theory. Bounds for $II_{b,2,2}$ follows from routine estimates for square and maximal functions. Lastly, $II_{b,2,3}=0$ on account of the various frequency supports and the assumption $k >> l$.

\end{proof}

\begin{corollary}
For each tuple $(p_1, p_2)$ such that $1 < p_1 , p_2 , \frac{p_1 p_2}{p_1 + p_2} <\infty$ there exists two families of Schwartz functions $\{\psi_k\}, \{ \tilde{\psi}_k \}$ with uniformly bounded $L^1$ norms satisfying $supp ~\hat{\psi}_k \subset[ -2^{k+1}, -2^{k-2}] \cup [2^{k-2}, 2^{k+1}]$  such that the maximal bi-sublinear operator given by 

\begin{eqnarray*}
S_2\left(\{\phi_k\}, \{\tilde{\phi}_k\}\right):(f_1, f_2) \mapsto \sup_{l \in \mathbb{Z}}\left| \sum_{k \leq l} H(f_1*\tilde{\psi}_k f_2 *\psi_k)\right|
\end{eqnarray*}
does not continuously map $L^{p_1}(\mathbb{R}) \times L^{p_2}(\mathbb{R}) \rightarrow L^{\frac{p_1 p_2}{p_1 + p_2}}(\mathbb{R})$. 
\end{corollary}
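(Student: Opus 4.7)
The plan is to reduce unboundedness of $S_2$ to the unboundedness of $S_1$ established in the preceding corollary, via a telescoping identity that expresses the product $f_1 * \tilde\phi_l \cdot f_2 * \phi_l$ as a partial sum of diagonal paraproducts plus bounded off-diagonal errors. Fix $(p_1, p_2)$ with $1 < p_1, p_2, p_3 := \frac{p_1 p_2}{p_1 + p_2} < \infty$. One first observes that the contradiction argument in the preceding corollary's proof remains valid when attention is restricted to standard Littlewood--Paley low-pass families $\phi_k(x) = 2^k\phi(2^k x)$ with $\hat\phi \equiv 1$ near the origin; hence there exist such LP families $\{\phi_k\}, \{\tilde\phi_k\}$ for which $S_1(\{\tilde\phi_k\}, \{\phi_k\})$ is unbounded on $L^{p_1} \times L^{p_2}$. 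Set $\psi_k := \phi_k - \phi_{k-1}$ and $\tilde\psi_k := \tilde\phi_k - \tilde\phi_{k-1}$; by the LP structure these are band-pass with Fourier supports in $[-2^{k+1}, -2^{k-2}] \cup [2^{k-2}, 2^{k+1}]$, as required by the statement.

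Using the telescoping $\sum_{k \leq l} \psi_k = \phi_l$, one expands
\[
f_1 * \tilde\phi_l \cdot f_2 * \phi_l = \sum_{k_1, k_2 \leq l} f_1 * \tilde\psi_{k_1} \cdot f_2 * \psi_{k_2} = \sum_{k \leq l} f_1 * \tilde\psi_k \cdot f_2 * \psi_k + R_l^+ + R_l^-,
\]
with $R_l^+ = \sum_{k \leq l} f_1 * \tilde\psi_k \cdot f_2 * \phi_{k-1}$ (the $k_1 > k_2$ off-diagonal) and $R_l^- = \sum_{k \leq l} f_1 * \tilde\phi_{k-1} \cdot f_2 * \psi_k$ (the $k_1 < k_2$ off-diagonal). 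Applying $H$ to both sides and then taking the supremum in $l$ produces the pointwise inequality
\[
S_1(\{\tilde\phi_k\}, \{\phi_k\})(f_1, f_2)(x) \leq S_2(\{\tilde\psi_k\}, \{\psi_k\})(f_1, f_2)(x) + \sup_{l \in \mathbb{Z}} |H(R_l^+)(x)| + \sup_{l \in \mathbb{Z}} |H(R_l^-)(x)|.
\]

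The crux is then to show $\sup_l |H(R_l^\pm)|$ maps $L^{p_1} \times L^{p_2}$ into $L^{p_3}$. Focus on $R_l^+$; the other case is treated symmetrically by swapping the roles of $f_1$ and $f_2$. Write $\Pi(f_1, f_2) := \sum_k f_1 * \tilde\psi_k \cdot f_2 * \phi_{k-1}$ for the full paraproduct of $(\psi, \phi)$ type, which is classically bounded on $L^{p_1} \times L^{p_2} \to L^{p_3}$. The band-pass/low-pass structure forces each summand in $R_l^+$ to have Fourier support in $\{|\xi| \sim 2^k\}$, so $\widehat{R_l^+}(\xi) = \hat\Pi(\xi) \cdot \eta_l(\xi)$ where $\eta_l$ is a smooth cutoff at scale $2^l$ inherited from the LP structure; consequently $H(R_l^+) = H(\Pi(f_1, f_2)) * \check\eta_l$ is a smooth Littlewood--Paley truncation of $H(\Pi(f_1, f_2))$. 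Standard maximal inequalities yield $\sup_l |H(R_l^+)(x)| \lesssim M(H(\Pi(f_1, f_2)))(x)$, and the Hardy--Littlewood maximal theorem together with $L^{p_3}$-boundedness of $H$ and the paraproduct bound give $\|\sup_l |H(R_l^+)|\|_{p_3} \lesssim \|f_1\|_{p_1} \|f_2\|_{p_2}$.

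Combining everything, hypothetical boundedness of $S_2$ on $(p_1, p_2)$ would force $\|S_1(f_1, f_2)\|_{p_3} \lesssim \|f_1\|_{p_1} \|f_2\|_{p_2}$, contradicting the choice of $\{\phi_k, \tilde\phi_k\}$. The main technical obstacle is the opening refinement: one must verify that the preceding corollary's proof produces LP families rather than arbitrary low-pass families. This requires inspecting each term $I$, $II_a$, $II_{b,1}$, $II_{b,2,1}$, $II_{b,2,2}$, $II_{b,2,3}$, $III$ in that proof's decomposition of $T_m$ and checking that the paraproduct and square-function estimates used there are compatible with a pre-specified LP choice; since none of those bounds depended on anything finer than uniform $L^1$ control and the standard Fourier support condition, the refinement is routine, albeit tedious.
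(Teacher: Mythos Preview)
Your overall strategy---reduce to the previous corollary by telescoping $\phi_l=\sum_{k\le l}\psi_k$, isolate the diagonal, and show the off-diagonal remainders are bounded---matches the paper's. The gap is in your treatment of $R_l^{\pm}$.

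You assert that each summand $f_1*\tilde\psi_k\cdot f_2*\phi_{k-1}$ has Fourier support in $\{|\xi|\sim 2^k\}$, and hence that $\widehat{R_l^+}=\hat\Pi\cdot\eta_l$ for a smooth cutoff $\eta_l$. This is false. With your construction $\psi_k=\phi_k-\phi_{k-1}$, the support of $\hat{\tilde\psi}_k$ is contained in $\{2^{k-2}\le|\xi|\le 2^k\}$ while $\hat\phi_{k-1}$ is supported in $\{|\xi|\le 2^{k-1}\}$; the Minkowski sum of these two sets contains $0$ (take $\xi_1=2^{k-2}$, $\xi_2=-2^{k-2}$). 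So the summands have nested, not lacunary, Fourier supports, and $R_l^+$ is \emph{not} a frequency truncation of the full paraproduct $\Pi$. Your maximal-function bound $\sup_l|H(R_l^+)|\lesssim M(H\Pi)$ therefore does not follow.

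The paper avoids this by never forming the $(\psi,\phi)$ remainders $R_l^\pm$. Instead it keeps the double sum $\sum_{k_1\ne k_2\le i}H(f_1*\tilde\psi_{k_1}\cdot f_2*\psi_{k_2})$ and splits according to $|k_1-k_2|$. When $|k_1-k_2|\ge 2$ the product $f_1*\tilde\psi_{k_1}\cdot f_2*\psi_{k_2}$ genuinely has Fourier support away from the origin, so $H$ may be removed and the remaining $\sup_i$ is controlled by the maximal partial-sum operator $\mathfrak{M}(f)=\sup_{a,b}\bigl|\sum_{a\le k\le b}f*\psi_k\bigr|$ together with the square function. When $|k_1-k_2|=1$ no such separation is available; the paper instead uses the polarization identity
\[
\tilde\psi_{k}\otimes\psi_{k-1}+\tilde\psi_{k-1}\otimes\psi_{k}
=(\tilde\psi_{k}+\tilde\psi_{k-1})\otimes(\psi_{k}+\psi_{k-1})
-\tilde\psi_{k}\otimes\psi_{k}-\tilde\psi_{k-1}\otimes\psi_{k-1},
\]
which rewrites the nearest-neighbor off-diagonal as a combination of diagonal terms with admissible $\psi$-families, and these are handled by the contradiction hypothesis on $S_2$ itself. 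Your argument can be repaired along these lines, but not via the Fourier-truncation shortcut you propose.
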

\begin{proof}
For a contradiction, suppose $S_2$ were bounded for some Lebesgue type $(p_1, p_2)$ satisfying $1 < p_1, p_2, \frac{p_1 p_2}{p_1 + p_2} <\infty$ for all admissible families $\{\tilde{\psi}_k\}, \{ \psi_k\}$.  It suffices to observe for any decomposition $\mathbb{R} = \bigcup_{i \in \mathcal{I}} C_i$

\begin{eqnarray*}
\int_{\mathbb{R}} \sum_{i \in \mathcal{I}} 1_{C_i} H(f_1 * \tilde{\phi}_i, f_2*\phi_i) f_3~dx  &\simeq& \int_{\mathbb{R}}  \sum_{i \in \mathcal{I}}\sum_{k_1 , k_2 \leq i}  1_{C_i} H(f_1 * \tilde{\psi}_{k_1}, f_2*\psi_{k_2}) f_3~dx,
\end{eqnarray*}
where the two families $\{\tilde{\psi}\}$ and $\{ \psi\}$ satisfy the uniform $L^1$ property in addition to $supp~\hat{\psi}_k ,supp~\hat{\tilde{\psi}}_k \subset[-2^{k+1}, -2^{k-1}] \cup  [2^{k-1}, 2^{k+1}]$. By assumption, the diagonal terms $k_1 = k_2$ will satisfy estimates, so we are left handling 

\begin{eqnarray*}
 \int_{\mathbb{R}}  \sum_{i \in \mathcal{I}}\sum_{k_1 \not = k_2 \leq i}  1_{C_i} H(f_1 * \tilde{\psi}_{k_1}, f_2*\psi_{k_2}) f_3~dx := I. 
\end{eqnarray*}
By the frequency support assumptions, if $|k-l| \geq 2$, then $H(f _1* \tilde{\psi}_k \cdot f_2 * \psi_l) = f_1*\tilde{\psi}_k \cdot f_2 * \psi_l$. Therefore, we may further decompose the above sum into the following parts:

\begin{eqnarray*}
I &=& \int_\mathbb{R}  \sum_{i \in \mathcal{I}} \sum_{ (k_1, k_2): |k_1 - k_2| = 1, k_1, k_2 \leq i }  1_{C_i}\cdot H(f_1 * \tilde{\psi}_{k_1}\cdot f_2*\psi_{k_2})\cdot  f_3~dx  \\&+& \int_\mathbb{R} \sum_{i \in \mathcal{I}}  \sum_{ (k_1, k_2): |k_1 - k_2| \geq 2, k_1, k_2 \leq i}  1_{C_i}\cdot  f_1 * \tilde{\psi}_{k_1} \cdot f_2*\psi_{k_2} \cdot f_3~dx \\ &:=& I_a + I_b. 
\end{eqnarray*}
Estimating $I_b$ is straightforward. Indeed, it is easy to see for $f_3 \in L^{p_3}(\mathbb{R})$ such that $||f_3||_{p_3}=1$ and $\sum_{j=1}^3 \frac{1}{p_i} =1$ that $|I_b| \lesssim || \mathfrak{M} (f_1)||_{p_1} || \mathfrak{M}(f_2)||_{p_2} + ||\mathcal{S}(f_1)||_{p_1} || \mathcal{S}(f_2)||_{p_2}$
where $\mathfrak{M}(f)(x) := \sup_{k_1,k_2} \left| \sum_{k_1 \leq k \leq k_2} f*\psi_k(x) \right|$ and $\mathcal{S}$ is the Littlewood-Paley square function. Moreover, as $\mathfrak{M}$ maps $L^p$ into $L^p$ for all $1 <p<\infty$, it suffices to prove estimates for $I_a$. We may rewrite

\begin{eqnarray*}
& &I_a\\ &=& \int_\mathbb{R}  \sum_{i \in \mathcal{I}}  \sum_{ k_1\leq i }  1_{C_i} \left( \left[ H(f_1 * \tilde{\psi}_{k_1} \cdot f_2*\psi_{k_1-1}) \right]+ H \left[ f_1 * \tilde{\psi}_{k_1-1} \cdot f_2 * \psi_{k_1} \right] \right)f_3~dx \\ &=&  \int_\mathbb{R}  \sum_{i \in \mathcal{I}} \sum_{ k_1\leq i }  1_{C_i} H \left[ f_1 * (\tilde{\psi}_{k_1}+ \tilde{\psi}_{k_1 -1})  f_2*(\psi_{k_1} + \psi_{k_1-1}) \right] dx\\ &-& \int_\mathbb{R} \sum_{i \in \mathcal{I}}  \sum_{ k_1\leq i }  1_{C_i}\left( H \left[ f_1 * \tilde{\psi}_{k_1-1}f_2 * \psi_{k_1} \right] +H \left[ f_1 * \tilde{\psi}_{k_1-1} f_2 * \psi_{k_1-1} \right]  \right) f_3~dx .
\end{eqnarray*}
However, by our hypothesis, each of the three main terms can be bounded, and hence $ \sup_{k \in \mathbb{Z}} \left| H(f_1 * \tilde{\phi}_k \cdot f_2 * \phi_k )\right|$ would satisfy estimates.   
\end{proof}
In fact, it is not hard to prove directly that for specific choices of $\{\phi_k\}$ and $\{\psi_k\}$ obeying the uniform $L^1$ conditions and support restrictions $supp~\hat{\phi}_k \subset [-2^k, 2^k]$ and $supp~\hat{\psi}_k \subset [2^{k-1}, 2^{k+1}] $  the maps 

\begin{eqnarray*}
(f_1, f_2)  \mapsto \sup_{k \in \mathbb{Z}} \left| H( f_1 * \phi_k \cdot f_2 * \psi_k ) \right|
\end{eqnarray*}
as well as
\begin{eqnarray*}
(f_1, f_2) \mapsto \sup_{k \in \mathbb{Z}} \left| \sum_{l \leq k} H \left( f_1 * \psi_k, f_2*\psi_k \right) \right|
\end{eqnarray*}
satisfy no $L^p$ estimates. 
We now prove Theorem \ref{IT}. 
\begin{proof}
Fix $N, M \in \mathbb{N}$. Let $A=2^M$ and choose $\phi \in \mathcal{S}(\mathbb{R})$ with compact Fourier support inside $[-1/4, 1/4]$ such that $\phi (0) \not = 0$. Then construct the functions 

\begin{eqnarray*}
f_1^{N,A}(x) &=& \sum_{1 \leq m \leq N} \phi(x-Am) e^{-2 \pi i 2^m x} := \sum_{1 \leq m \leq N} f^{N,A}_{1,m}(x) \\ 
f_2^{N,A}(x) &=& \sum_{1 \leq m \leq N} \phi(x-Am) e^{2 \pi i 2^m x}:= \sum_{1 \leq m \leq N} f^{N,A}_{2,m}(x) \\ 
f_3^{N,A}(x) &=& \sum_{1 \leq m \leq N} \phi(x-Am) e^{2 \pi i 2^m x} := \sum_{1 \leq m \leq N} f^{N,A}_{3,m}(x) .
\end{eqnarray*}
Let $S_N := \bigcup_{n \in \mathbb{Z} \cap [N/2, 2N/3]} [ An, An+1]$. The claim is that $|T_m(f_1^N, f_2^N, f_3^N)(x)1_{S_N}(x)|  \gtrsim_A \log(N) 1_{S_N}(x)$ for sufficiently large choice of $A$ independent of $N$,  from which we immediately deduce that $||T_m(f_1^N, f_2^N, f_3^N)||_p \gtrsim \log(N) N^{\frac{1}{p_1} + \frac{1}{p_2}}$ whereas $\prod_{i=1}^2 || f_i^N||_{p_i} \simeq N^{\frac{1}{p_1} + \frac{1}{p_2}}$. Taking $N$ arbitrarily large yields the theorem. For each $k \in \mathbb{N}$ choose $\phi_k , \psi_k \in \mathcal{S}(\mathbb{R})$ satisfying $1_{[-2^{k-1}, 2^{k-1}]} \leq \hat{\phi}_k \leq 1_{[-2^{k}, 2^{k}]}$ and $1_{[2^{k-1/4}, 2^{k+1/4}]} \leq \hat{\psi}_k \leq 1_{ [2^{k-\frac{1}{2}}, 2^{k+\frac{1}{2}}]}$ in addition to the properties 

\begin{eqnarray*}
\left| \left[\frac{d}{d\xi}\right]^\alpha \hat{\psi}_k (\xi) \right| , \left| \left[ \frac{d}{d\xi}\right]^\alpha \hat{\phi}_k (\xi) \right|\leq \frac{C_\alpha}{2^{k \alpha}} ~~\forall~\vec{\alpha} \in 0 \cup \mathbb{N}.
\end{eqnarray*}
 Take $a(\xi_2, \xi_3) = \sum_{k \in \mathbb{N}} \hat{\phi}_k(\xi_2) \hat{\psi}_k(\xi_3)$ and observe that for $1 \leq n_2, n_3 \leq N$ whenever $n_3 << n_2$

\begin{eqnarray*}
&&T_m(f_1^A, f_2^A, f_3^A )(x) \\&:=& \sum_{k \in \mathbb{N}}~\sum_{1 \leq m_1, m_2, m_3 \leq N}  \int_{\xi_1 + \xi_2>0} \hat{f}^A_{1,m_1}(\xi_1) \hat{f}^A_{2,m_2}(\xi_2)  \hat{f}^A_{3,m_3}(\xi_3)\hat{\phi}_k(\xi_2) \hat{\psi}_k(\xi_3) e^{2 \pi i x (\xi_1 + \xi_2 + \xi_3)} d\xi_1 d \xi_2 d \xi_3 \\ &:=& \sum_{k \in \mathbb{Z}}~ \sum_{1 \leq m_1, m_2, m_3 \leq N} T_m^k(f^{N,A}_{1,m_1}, f^A_{2, m_2}, f^A_{3, m_3})(x).
\end{eqnarray*}
By construction, $supp~\hat{f}^A_{2,n}, supp~\hat{f}^A_{3,n} \subset [2^{M +n}-1/4, 2^{M+n}+1/4]$. Hence, the only tuples $(m_1, m_2, m_3)$ for which $T_m^k (f^A_{1, m_1}, f^A_{2, m_2}, f^A_{3, m_3}) \not \equiv 0$ must satisfy $m_2 \leq  m_3$. Moreover, $m_2 \leq m_3-2$ together with $T_m^k(f^{N,A}_{1,m_1}, f^A_{2, m_2}, f^A_{3, m_3})(x) \not \equiv 0$ ensures 

\begin{eqnarray*}
T_m^k(f^{N,A}_{1,m_1}, f^{A}_{2, m_2}, f^{A}_{3, m_3})(x) = \left( \int_\mathbb{R} \prod_{j=1}^2 \phi(x-Am_j -t)e^{2\pi i(-1)^j 2^{m_j}t}  \frac{dt}{t} \right) \left( \phi(x-Am_3) e^{2 \pi i  (2^{m_2}+ 2^{m_3}-2^{m_1})x} \right).
\end{eqnarray*}
To save space, say $A << B$ provided $A \leq B-2$. Moreover, it is easy to see that those terms corresponding to $m_2\in \{m_3-1, m_3\}$ and $m_2 \not <\not< m_3$ can be satisfactorily estimated. Indeed, this portion is writable as $\sum_{k \in \mathbb{N}} H(f_1 \cdot f_2*\tilde{\psi}_k) \cdot f_3*\psi_k$, which can again be handled by Cauchy-Schwarz, the Fefferman-Stein inequality, and routine square function estimates. Hence, it suffices to produce a $\log$-type pointwise blow-up for the remaining terms: 
\begin{eqnarray*}
&& \tilde{T}_m(f_1^A, f_2^A, f_3^A)i (x)\\&:=& \sum_{1 \leq m_2 \leq N } \sum_{ m_1 \leq  m_2} \sum_{m_3 >> m_2} \left(  \int_\mathbb{R} \prod_{j=1}^2 \phi(x-Am_j -t)e^{2\pi i (-1)^j2^{m_j}t}  \frac{dt}{t} \right) \left( \phi(x-Am_3) e^{2 \pi i  (2^{m_2}+ 2^{m_3}-2^{m_1})x} \right).
\end{eqnarray*}
\subsection{Main Contribution}
Fix $x \in \left[An_0,An_0+1\right]$. Due to the Schwartz decay of $\phi$, one expects the main term to arise from the cases where $m_3 = n_0$, in which case the corresponding sum over the $(m_1, m_2)$ will be $\sum_{ m_2 << n_0} \sum_{ m_1 \leq m_2}$. Moreover, of these terms, one expects the largest component to arise from the terms where $m_1 \simeq m_2$. Thus, at least heuristically, we are able to produce a quantity $O(\log(N))$ on a set of size $O(N)$ as claimed. 

Now let $x = An_0 + \theta_x$ where $0 \leq \theta_x <1$. The main contribution to $T(f_1^{N,A}, f_2^{N,A}, f_3^{N,A})$ when $m_3 = n_0, m_2=m_1$ is given  by the formula

\begin{eqnarray*}
&&  \sum_{m_1 <<  n_0}   T_m\left(f^{N,A}_{1,m_1}, f^{N,A}_{2,m_1}, f^{N,A}_{3,n_0}\right)(x) \\ &=&\left( \sum_{m_1 <<  n_0}  \int_\mathbb{R}  \phi^2(An_0 + \theta_x-Am_1 -t)  \frac{dt}{t} \right) \left( \phi(\theta_x) e^{2 \pi i 2^{n_0}\theta_x} \right) \\ &=& \left( \sum_{m_1 <<  n_0} \sum_{l \in \mathbb{Z}}   \int_{A(l-\frac{1}{2})}^{A(l+\frac{1}{2})}  \phi^2(An_0 + \theta_x-Am_1 -t)  \frac{dt}{t} \right) \left( \phi(\theta_x) e^{2 \pi i 2^{n_0}\theta_x} \right) \\ &:=&   \sum_{m_1 <<  n_0}  \sum_{l \in \mathbb{Z}} T_m^l\left(f^{N,A}_{1,m_1}, f^{N,A}_{2,m_1}, f^{ A}_{3,n_0}\right)(x).
\end{eqnarray*}
\begin{lemma}
 To prove Theorem \ref{IT}, it  suffices to show that for sufficiently large $M \in \mathbb{Z}$ and each $l \in \mathbb{Z} \cap [2, N/4]$ 

\begin{eqnarray*}
 Re \left[ e^{-2 \pi iA 2^{n_0} x} \sum_{m_1 <<  n_0} T^l_m \left(f^{N,A}_{1,m_1}, f^{N,A}_{2,m_1}, f^{N, A}_{3,n_0}\right)(x) \right]1_{S_N}(x) \gtrsim \frac{1}{Al}.
 \end{eqnarray*}
 \end{lemma}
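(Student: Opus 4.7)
The plan is to reduce the full pointwise analysis of $\tilde{T}_m(f_1^{N,A}, f_2^{N,A}, f_3^{N,A})(x)$ on $S_N$ to the stated model lower bound by isolating the diagonal, main-stationary-phase contribution and absorbing everything else using Schwartz tails and non-stationary-phase integration by parts, in the same spirit as the rational/irrational case analyses of Section 2.

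First, fixing $n_0 \in \mathbb{Z} \cap [N/2, 2N/3]$ and writing $x = An_0 + \theta_x$ with $\theta_x \in [0,1)$, I would split the sum over $m_3$. For $m_3 \neq n_0$ the factor $\phi(x - Am_3) = \phi(\theta_x + A(n_0-m_3))$ has Schwartz size $(1 + A|n_0-m_3|)^{-K}$ for any $K$; combined with the Schwartz localization of the inner $t$-integral $\phi$-factors (which force $t \simeq A(n_0 - m_j)$, and hence $m_j \simeq n_0 - l$ for the $l$-th discretized piece of the singular integral), the contribution from all $m_3 \neq n_0$ is $O_K(A^{-K})$ pointwise on $S_N$ and is absorbable by choosing $M$ sufficiently large.

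Second, within the $m_3 = n_0$ piece I would separate $\sum_{m_1 \leq m_2 \ll n_0}$ into the diagonal $m_1 = m_2$ and the off-diagonal $m_1 < m_2$. The diagonal is exactly the quantity appearing in the lemma. On the off-diagonal the $t$-integrand carries an extra oscillatory factor $e^{2\pi i(2^{m_2}-2^{m_1})t}$ whose phase derivative is $\gtrsim 2^{m_2} \geq 2$; non-stationary integration by parts in $t$, combined with $|\phi^{(r)}| \lesssim_r 1$ and the fact that $|t| \simeq Al \geq 2A$ keeps $1/t$ smooth with derivative bounds $|(1/t)^{(r)}| \lesssim (Al)^{-r-1}$, gains at least a factor $2^{-m_2}$ per integration. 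Summing the resulting bounds over pairs with $m_1 < m_2$ forced by the $\phi$-localization to satisfy $m_1, m_2 \simeq n_0 - l$ produces an extra $O(1/A)$ factor relative to the diagonal, again absorbable.

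Third, I would truncate $l$ as follows: for $l \leq 1$ or $l \gtrsim N$ either the Schwartz tails of the $\phi$-factors in $t$ dominate, or the required $m_j \simeq n_0 - l$ leaves $[1,N]$, and in either case the total contribution summed over such $l$ is $O(1/A)$, dominated by the logarithmic main term. Granted the pointwise bound in the lemma, summation over $l \in [2, N/4]$ yields
\begin{eqnarray*}
|\tilde{T}_m(f_1^{N,A}, f_2^{N,A}, f_3^{N,A})(x)| \gtrsim \sum_{l=2}^{N/4} \frac{1}{Al} \gtrsim \frac{\log N}{A}
\end{eqnarray*}
uniformly on $S_N$, where $|S_N| \simeq N$ by construction. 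Since $\|f_j^{N,A}\|_{p_j} \simeq N^{1/p_j}$ by near-disjointness of the translated wavepackets, one concludes
\begin{eqnarray*}
\|T_m(f_1^{N,A}, f_2^{N,A}, f_3^{N,A})\|_{1/(1/p_1+1/p_2+1/p_3)} \gtrsim \frac{\log N}{A} \cdot N^{1/p_1+1/p_2+1/p_3},
\end{eqnarray*}
which blows up logarithmically relative to $\prod_j \|f_j^{N,A}\|_{p_j}$ as $N \to \infty$, ruling out every $L^p$ estimate for $T_m$ and thereby establishing Theorem \ref{IT}.

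The hard part will be the off-diagonal $m_1 < m_2$ bookkeeping in step two: one must verify that the integration-by-parts gain from the lacunary phase $2^{m_2}-2^{m_1}$ strictly beats the $O(l)$-many admissible pairs $(m_1, m_2)$ of size $\simeq n_0 - l$, uniformly in $l$ and $n_0$. The choice of lacunary carrier frequencies $2^m$ is precisely what makes the gain $\gtrsim 2^{-m_2}$ essentially automatic and summable; this would fail for generic frequencies, which is why the chirp construction with lacunary modulations is essential here.
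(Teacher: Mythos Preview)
Your reduction is sound and follows the same overall architecture as the paper's argument: isolate the diagonal $(m_1=m_2,\,m_3=n_0)$ piece, discretize the singular integral in $l$, sum the model lower bounds $1/(Al)$ over $l\in[2,N/4]$ to produce $\log N/A$ on $S_N$, and absorb everything else. Two of your error estimates, however, differ from the paper and are stated imprecisely.

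For the off-diagonal $m_1<m_2$ terms the paper does \emph{not} work $l$-piece by $l$-piece or integrate by parts. It instead uses the compact Fourier support of $\phi$: since $\widehat\phi\subset[-1/4,1/4]$, the function $g(t)=\phi(x-Am_1-t)\phi(x-Am_2-t)e^{2\pi i(2^{m_1}-2^{m_2})t}$ has Fourier support bounded away from $0$, so $\mathrm{p.v.}\int g(t)\,dt/t$ is a constant multiple of $g(0)=\phi(x-Am_1)\phi(x-Am_2)$. The triple sum of these products is then $O(1)$ by Schwartz decay alone. Your IBP route works too, but your ``$O(l)$-many admissible pairs'' count is wrong: the $\phi$-localization pins $m_1,m_2$ to $n_0-l+O(1)$, so there are only $O(1)$ pairs per $l$-piece, and at least one $\phi$ factor already carries $A^{-K}$ decay since $m_1\neq m_2$; IBP and the lacunary gain $2^{-m_2}$ are unnecessary.

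Your claim that the total $m_3\neq n_0$ contribution is $O_K(A^{-K})$ uniformly in $N$ is not correct. On the diagonal $m_1=m_2$ the inner integral becomes $H[\phi^2](x-Am_1)$, which has only $1/t$ decay; summing over $m_1$ gives size $\sim\log N/A$, so the total (the paper's $E_{II}$) is $O(A^{-2}+A^{-3}\log N)$. This is still $\ll\log N/A$ for $A$ large, so your conclusion survives, but the bound is not uniform in $N$ and you should track the $\log N$ factor explicitly. Similarly, the $l\in\{0,1\}$ pieces are $O(1)$ (using that $\phi^2$ is Lipschitz for the $l=0$ principal value), not $O(1/A)$; this is still absorbable once $N$ is large.
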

Indeed, assuming the claim, it follows that $\forall x \in S_N$
\begin{eqnarray*}
&& Re \left[ e^{-2 \pi i A 2^{n_0}x}  \sum_{m_1 <<  n_0}   T_m\left(f_{1,m_1}^{N, A}, f^{N,A}_{2,m_1}, f^{N,A}_{3,n_0}\right)(x)   \right]  \\&=&  Re \left[ e^{-2 \pi iA 2^{n_0}x}  \sum_{m_1 <<  n_0}  \sum_{l \in \mathbb{Z}}  T^l_m \left(f^{N,A}_{1,m_1}, f^{N,A}_{2,m_1}, f^{N, A}_{3,n_0}\right)(x)   \right] \\ &=&Re \left[ e^{-2 \pi i A2^{n_0}x}  \sum_{m_1 <<  n_0}~  \sum_{l \in \mathbb{Z}\cap [2, N/4] }  T^l_m \left(f^{N,A}_{1,m_1}, f^{N,A}_{2,m_1}, f^{N,A}_{3,n_0}\right)(x)   \right] \\&+&Re \left[ e^{-2 \pi i A2^{n_0}x}  \sum_{m_1 <<  n_0} ~ \sum_{l \in \mathbb{Z} \cap [2, N/4]^c}  T^l_m \left(f^{N,A}_{1,m_1}, f^{N,A}_{2,m_1}, f^{N,A}_{3,n_0}\right)(x)   \right] \\ &\geq &  C_{A, \alpha} \log(N)- \sum_{m_1 <<  n_0} \left|  \sum_{l \in \mathbb{Z} \cap [2, N/4]^c}  T^l_m \left(f^{N,A}_{1,m_1}, f^{N,A}_{2,m_1}, f^{N,A}_{3,n_0}\right)(x)  \right|.
 \end{eqnarray*}
We further break apart the last sum as follows: 

\begin{eqnarray*}
\sum_{m_1 <<  n_0} \left|  \sum_{l \in \mathbb{Z} \cap [\alpha, N/4]^c}  T^l_m \left(f^{N,A}_{1,m_1}, f^{N,A}_{2,m_1}, f^{ A}_{3,n_0}\right)(x)  \right| &=& \sum_{m_1 <<  n_0} \left|  \sum_{l \in \mathbb{Z} \cap (N/4, 100N] }  T^l_m \left(f^{N,A}_{1,m_1}, f^{N,A}_{2,m_1}, f^{N,A}_{3,n_0}\right)(x)  \right| \\&+& \sum_{m_1 <<  n_0} \left|  \sum_{l \in \mathbb{Z} \cap (100N, \infty) }  T^l_m \left(f^{N,A}_{1,m_1}, f^{N,A}_{2,m_1}, f^{N,A}_{3,n_0}\right)(x)  \right| \\ &+& \sum_{m_1 <<  n_0} \left|  \sum_{l \in \mathbb{Z} \cap [0, 2)}  T^l_m \left(f^{N,A}_{1,m_1}, f^{N,A}_{2,m_1}, f^{ A}_{3,n_0}\right)(x)  \right| \\&+& \sum_{m_1 <<  n_0} \left|  \sum_{l \in \mathbb{Z} \cap (-\infty , 0)}  T^l_m \left(f^{N,A}_{1,m_1}, f^{N,A}_{2,m_1}, f^{ A}_{3,n_0}\right)(x)  \right|  \\ &:=& I + II + III + IV. 
\end{eqnarray*}
To bound $I(x)$, note $|T^l_m \left(f^{N,A}_{1,m_1}, f^{N,A}_{2,m_1}, f^{N, A}_{3,n_0}\right)(x)| \lesssim \frac{1}{(1+A^2 |n_0 - m_1-l|^2) l}$  which ensures 

\begin{eqnarray*}
 |~I(x)~| \lesssim \sum_{m_1 <<  n_0} \sum_{l \in \mathbb{Z} \cap (N/4, 100N] }    \frac{1}{(1+A^2 |n_0 - m_1-l|^2) \cdot l} \lesssim 1. 
\end{eqnarray*}
For $II(x)$, note that by the restrictions placed on $n_0$ and $m_1$ 

\begin{eqnarray*}
|~II(x)~| \lesssim \sum_{m_1 <<  n_0} ~\sum_{l \in \mathbb{Z} \cap (100N, \infty) }   \frac{1}{1+A^3 |n_0 - m_1-l|^3 } \lesssim \sum_{1 \leq m_1 \leq N} \frac{1}{1+A^3 |n_0-m_1|^2} \lesssim 1.
\end{eqnarray*}
Term $III(x)$ is estimated by $T^l_m \left(f^{N,A}_{1,m_1}, f^{N,A}_{2,m_1}, f^{N, A}_{3,n_0}\right)(x)| \lesssim 1$. Indeed, it is trivial for $l \not = 0$. If $l=0$, then one only needs
\begin{eqnarray*}
|T_m \left(f^{N,A}_{1,m_1}, f^{N,A}_{2,m_1}, f^{ A}_{3,n_0}\right)(x) | \lesssim \int_{-\frac{A}{2}}^{\frac{A}{2}} \left| \phi^2(An_0 + \theta_x - Am_1 +t) - \phi^2(An_0 + \theta_x - Am_1 -t) \right|  \frac{dt}{t} \lesssim_A \frac{1}{1+A^2|n_0 -m_1|^2}
\end{eqnarray*}
since $\phi^2 \in \mathcal{S}(\mathbb{R})$ hence its derivative is uniformly bounded and $\phi^2$ is Lipschitz. 
Lastly, it is clear that

\begin{eqnarray*}
|~IV(x)~| \lesssim \sum_{m_1 <<  n_0}~ \sum_{l \in \mathbb{Z} \cap (-\infty , 0)}   \frac{1}{A^3 |n_0 - m_1-l|^3 } \lesssim \sum_{m_1 <<  n_0} \frac{1}{A^3 |n_0 - m_1|^2 } \lesssim 1.
\end{eqnarray*}
Hence, to prove Theorem \ref{IT} assuming the claim, it  is enough to handle two remaining error terms $E_I(x)$ and $E_{II}(x)$ where

\begin{eqnarray*}
E_I(x)&:=&  \sum_{1 \leq m_3 \leq N} ~  \sum_{m_2 << m_3}~ \sum_{ m_1 <m_2 }T_m(f^{N,A}_{1,m_1}, f_{2,m_2}^{N,A}, f_{3,m_3}^{N,A})(x)\\ E_{II}(x)&:=& \sum_{m_3 \not = m_0}~ \sum_{ m_2 << m_3} ~\sum_{m_1 = m_2} T_m(f^{N,A}_{1,m_1}, f_{2,m_2}^{N,A}, f_{3,m_3}^{N,A})(x).
\end{eqnarray*}
We estimate $E_I(x), E_{II}(x)$ separately:

\begin{eqnarray*}
&& |~E_I(x)~| \\ &\leq&\sum_{1  \leq m_3 \leq N} ~\sum_{ m_2 <<m_3}   ~\sum_{m_1 < m_2 }  \left| \left(  \int_\mathbb{R} \phi(x-Am_1 -t) \phi(x-Am_2 -t) e^{2 \pi i (2^{m_1}-2^{m_2}) t} \frac{dt}{t} \right) (\phi(x-Am_3)) \right| \\&\leq& \sum_{1 \leq m_3 \leq N} \sum_{m_2 << m_3} \sum_{ m_1 < m_2} \phi(x-Am_1) \phi(x-Am_2) \phi(x-Am_3) \\ &\lesssim& \sum_{1 \leq m_3 \leq N} \sum_{m_1< m_2} \sum_{m_2 << m_3}  \frac{1}{1+A^2 |n_0-m_1|^2} \frac{1}{1+A^2 |n_0-m_2|^2} \frac{1}{1+A^2 |n_0-m_3|^2} \\ &\lesssim 1. 
\end{eqnarray*}
The remaining error will require A to be sufficiently large: 

\begin{eqnarray*}
&&|~E_{II} (x)~| \\ &=& \left| \sum_{m_3 \not = n_0} \sum_{m_1 = m_2} \sum_{m_2 << m_3} \left(  \int_\mathbb{R} \left[ \prod_{j=1}^2 \phi(x-Am_j -t) \right]e^{-2 \pi i  2^{m_1} (x -t)} e^{2 \pi i  2^{m_2} (x-t)} \frac{dt}{t} \right) \left( \phi(x-Am_3) e^{2 \pi i  2^{m_3} x}\right) \right| \\ &=& \left| \sum_{m_3 \not = n_0} \sum_{m_1 << m_3 }H \left[ \phi^2 \right] (x-Am_1) \cdot \phi(x-Am_3)e^{2 \pi i 2^{m_3}x} \right| \\ &\lesssim& \sum_{m_3 \not = n_0} \sum_{1 \leq m_1 \leq N} \frac{1}{1+A|n_0-m_1|} \frac{1}{1+A^2 |n_0-m_3|^2} \\ &\lesssim& \frac{\log(N)}{A^{3}}. 
\end{eqnarray*}
We can therefore choose $M \in \mathbb{N}$ large enough to achieve the desired point-wise lower bound for the main contribution.

Lastly, to prove the claim, write down $\forall~x \in S_N$ and $\forall ~l \in \mathbb{Z} \cap [2, N/4]$

\begin{eqnarray*}
  Re \left[ e^{-2 \pi i 2^{n_0} x} \sum_{m_1 <<  n_0} T^l_m \left(f^{N,A}_{1,m_1}, f^{N,A}_{2,m_1}, f^{N,A}_{3,n_0}\right)(x) \right] \gtrsim Re \left[ \left( \sum_{m_1 << n_0}  \int_{A(l-\frac{1}{2})}^{A(l+\frac{1}{2})}  \phi^2(An_0 + \theta_x-Am_1 -t)  \frac{dt}{t} \right) \right] .
\end{eqnarray*}
Assuming $n_0 - N/4 \leq m_1 \leq n_0-2$ and $l = n_0 -m_1$ note that  $\int_{A(l-\frac{1}{2})}^{A(l+\frac{1}{2})}  \phi^2(An_0 + \theta_x-Am_1 -t)  \frac{dt}{t} \gtrsim  \frac{1}{Al}. $ As $n_0 \in [N/2, 2N/3]$,  $1 \leq m_1 \leq N$, $m_1 << n_0$, $l \in [2, N/4]$ satisfy our constraints, and the remaining terms in the sum are all positive, the claim is true and, hence, Theorem \ref{IT} has been shown. 


\end{proof}

\section{Unboundedness for Hyperplane Symbols in Dimension $n \geq 5$}

\begin{definition}
Fix $\Phi, \Psi \in \mathcal{S}(\mathbb{R})$ such that $1_{[-1/2,1/2]} \leq \Phi \leq 1_{[-1,1]}$ and $1_{[2, \infty)} \leq \Psi \leq 1_{[1,\infty)}$.  For every pair of distinct hyperplanes $(\Gamma^{\vec{\alpha}}, \Gamma^{\vec{\beta}}) \subset \mathbb{R}^n$ and symbol $m : \mathbb{R}^n \rightarrow \mathbb{C}$, the $(\Phi, \Psi)$-localization of $m$ near $\Gamma^{\vec{\alpha}}$ away from $\Gamma^{\vec{\beta}}$ is the symbol defined by 

\begin{eqnarray*}
m[\vec{\alpha}, \vec{\beta}, \Phi, \Psi] (\vec{\xi}) = \Phi(\vec{\alpha} \cdot \vec{\xi}) \Psi(\vec{\beta} \cdot \vec{\xi})m (\vec{\xi})~~\forall\vec{\xi} \in \mathbb{R}^n.
\end{eqnarray*}
By construction, $m[\vec{\alpha}, \vec{\beta}, \Phi, \Psi] (\vec{\xi}) \in \mathcal{M}_{\Gamma^{\vec{\alpha}}}(\mathbb{R}^n) \cap \mathcal{M}_{\Gamma^{\vec{\beta}}}(\mathbb{R}^n)  $ is supported inside $\{dist(\vec{\xi}, \Gamma^{\vec{\alpha}}) \lesssim 1\}$
 and if $\vec{\xi} \in \{ dist(\vec{\xi}, \Gamma^{\vec{\alpha}}) \lesssim 1\} \cap \{dist(\vec{\xi}, \Gamma^{\vec{\beta}})  \gtrsim dist(\vec{\xi}, \Gamma^{\vec{\alpha}})\}$, then $m_{\Gamma^{\vec{\alpha}}, \Gamma^{\vec{\beta}}}(\vec{\xi})= m(\vec{\xi})$.
\end{definition}


Our main result in this section is
\begin{theorem}\label{LocThm}
Fix $n \geq 5$. Let $\vec{\alpha} , \vec{\beta} \in \mathbb{R}^n$ satisfy $\alpha_j = \beta_j z_j$ for some $\vec{z} \in \mathbb{Z}^n$. 
Assume there exists $\vec{\#} \in \mathbb{R}^n$ s.t.

\begin{eqnarray*}
\sum_{j=1}^n \#_j \alpha_j = \sum_{j=1}^n \#_j \alpha_j^2 = \sum_{j=1}^n \#_j \beta_j = \sum_{j=1}^n \#_j \beta_j^2 &=&0 \\
\sum_{j=1}^n \#_j \alpha_j \beta_j  &\not =& 0  
\end{eqnarray*}
with the additional property that $ \#_j \alpha^2_j \in \mathbb{Q}$ for all $1 \leq j \leq n$. 
Moreover, suppose $K_1, K_2$ are two real-valued kernels for which $\hat{K}_1, \hat{K}_2 \in \mathcal{M}_{\{0\}}(\mathbb{R})$, $K_1, K_2$ are odd, there exist $C_1, C_2 >0$ so that

\begin{eqnarray*}
 K_1(\xi) \geq 0~~~~~~~~\forall \xi >0 \\ 
 K_1(\xi) \geq C_1/ \xi~~~\forall \xi \geq 1\\
\liminf_{s \rightarrow \infty} Im [\check{K}_2 ](s) = C_2.
\end{eqnarray*}
Then every $(\Phi, \Psi)$-localization of $ \hat{K}_1(\vec{\alpha} \cdot \vec{\xi}) \hat{K}_2(\vec{\beta} \cdot \vec{\xi}) : \mathbb{R}^2 \rightarrow \mathbb{C}$ gives rise to a multiplier $T_{ \hat{K}_1(\vec{\alpha} \cdot \vec{\xi}) \hat{K}_2(\vec{\beta} \cdot \vec{\xi} )[\vec{\alpha}, \vec{\beta}, \Phi, \Psi]}$ satisfying no $L^p$ estimates.
\end{theorem}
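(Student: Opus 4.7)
The strategy is to adapt the chirp construction of Theorem \ref{MT**} to this two-kernel setting. For each large $N$ and a fixed auxiliary integer $A$, chosen independent of $N$ and sufficiently large with $A^2 c \in \mathbb{Z}$, where $c := \sum_j \#_j \alpha_j \beta_j$ (which lies in $\mathbb{Q}$ since the rationality hypothesis $\#_j \alpha_j^2 \in \mathbb{Q}$ combined with $\alpha_j = \beta_j z_j$, $z_j \in \mathbb{Z}$, forces $\#_j \beta_j^2 \in \mathbb{Q}$ and hence $c = \sum_j (\#_j \beta_j^2) z_j \in \mathbb{Q}$), I build chirps
\[
f_j^{N,A}(x) = \sum_{|m|\leq N} \phi(x-Am)\,e^{2\pi i A \#_j m x}
\]
using the hypothesized $\vec{\#}$. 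Since $\prod_j \|f_j^{N,A}\|_{p_j} \simeq N^{\sum 1/p_j}$, a pointwise lower bound $|T_{m[\vec\alpha,\vec\beta,\Phi,\Psi]}(\vec f^{N,A})(x)| \gtrsim \log N$ on a set $S_N$ of measure $\gtrsim_A N$ forces failure of every $L^p$ estimate as $N \to \infty$. Writing $m[\vec\alpha,\vec\beta,\Phi,\Psi](\vec\xi) = h_1(\vec\alpha\cdot\vec\xi)\,h_2(\vec\beta\cdot\vec\xi)$ with $h_1 = \Phi \hat K_1$ and $h_2 = \Psi \hat K_2$, the operator has the integral representation $T(\vec f)(x) = \iint \check h_1(t_1) \check h_2(t_2) \prod_j f_j(x - \alpha_j t_1 - \beta_j t_2)\,dt_1\,dt_2$.

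After substituting the chirps, discretizing each $t_i$ into cells of width $A$ centered at $t_i = Ak_i$, and for $x = An_0 + \theta_x$ with $\theta_x$ in a small pocket $[0, c_{\vec\alpha,\vec\beta}/A]$ isolating the \emph{core} vector $m_j(n_0, k_1, k_2) = n_0 - \alpha_j k_1 - \beta_j k_2$, the four orthogonalities $\sum \#_j \alpha_j = \sum \#_j \alpha_j^2 = \sum \#_j \beta_j = \sum \#_j \beta_j^2 = 0$ collapse the core phase to $e^{2\pi i A(\sum\#_j) n_0 x} \cdot e^{2\pi i A c (k_1 t_2 + k_2 t_1)}$. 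The Fourier support of the chirps combined with $\mathrm{supp}\,\Phi \subset [-1,1]$ forces $k_2 = 0$ (since $\vec\alpha\cdot\vec\xi$ lies near $-Ack_2$), and $\mathrm{supp}\,\Psi \subset [1,\infty)$ restricts $k_1$ to one definite sign with $|k_1| \gtrsim 1/(A|c|)$. With $k_2 = 0$ the residual $t_1$-phase vanishes, and only the $t_2$-phase $e^{2\pi i A c k_1 t_2}$ survives. The inner $t_2$-integral then equals, by Parseval, $(h_2 * \widehat{\Phi_\tau})(-Ack_1)$ where $\widehat{\Phi_\tau}$ is the Schwartz Fourier transform in $t_2$ of $\prod_j \phi(\theta_x - \alpha_j \tau_1 - \beta_j\cdot)$; for large $|Ack_1|$ the $\Psi$-cutoff is unity and the hypothesis $\liminf \mathrm{Im}[\check K_2] = C_2$ extracts a pointwise factor $\sim iC_2 \prod_j \phi(\theta_x - \alpha_j \tau_1)$ up to Schwartz convolutional remainders. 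The outer $t_1$-integral and $k_1$-sum then reduce to essentially the analysis of Lemma \ref{ML}: $\check h_1(Ak_1 + \tau_1) = (\check\Phi * K_1)(Ak_1 + \tau_1) \gtrsim C_1/(Ak_1)$ for large $k_1$ by the hypothesis $K_1(\xi) \geq C_1/\xi$ together with smoothing by $\check\Phi$, and summing over $k_1 \in [1/(A|c|), N/C]$ gives a harmonic-type divergence of order $(C_1 C_2 / A) \log N$.

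Perturbations of the core split (as in Lemma \ref{ML}) into small perturbations $\vec\Delta$ satisfying $\sum \#_j \Delta_j \alpha_j = \sum \#_j \Delta_j \beta_j = 0$, which reinforce the main term after shrinking $c_{\vec\alpha,\vec\beta}$; other small perturbations killed by integration by parts exploiting the integrality of $A^2 c$, $A \#_j \alpha_j$, and $A \#_j \beta_j$; and large perturbations killed by Schwartz decay of $\phi$ after $A$ is chosen sufficiently large. The irrational case, where $\vec\alpha$ or $\vec\beta$ may have irrational entries, is handled via Bohr sets (Lemma \ref{ML***}) restricting $n_0$ to $\mathrm{Bohr}_{cN}(\{1/\alpha_j, 1/\beta_j\}_j, 1/A^2)$, of cardinality $\gtrsim_A N$, so that the core lattice conditions hold up to error $O(1/A^2)$. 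The principal technical obstacle is rigorously extracting the $\liminf$-asymptotic constant $C_2$ from the convolution $h_2 * \widehat{\Phi_\tau}$ uniformly in $\tau_1$ across the effective support of $\prod_j \phi(\theta_x - \alpha_j \tau_1 - \beta_j \cdot)$: one must control both the Schwartz remainder terms and the effect of the $\Psi$-cutoff on the high-frequency limit, confirming that the regime $|Ack_1| \to \infty$ is compatible with $\tau_1$ ranging over a set of positive measure in the $\phi$-support uniformly in $k_1$.
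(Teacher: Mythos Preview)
Your proposal is correct and follows essentially the same route as the paper: the same chirp functions, the same discretization in $(t_1,t_2)$, the same core vector $m_j(n_0,k_1,k_2)=n_0-\alpha_j k_1-\beta_j k_2$, the same phase collapse via the four orthogonality relations forcing $k_2=0$ and leaving only the $e^{2\pi i A c k_1 t_2}$ oscillation, the same harmonic $\log N$ divergence, the same small/large perturbation taxonomy, and the same Bohr-set modification for the irrational case. The ``principal technical obstacle'' you flag---uniform extraction of the asymptotic constant $C_2$ from the oscillatory $t_2$-integral across the $\phi$-support in $\tau_1$---is precisely what the paper isolates and proves as Lemmas~\ref{ML*} and~\ref{ML7} via direct integration by parts; your Parseval/convolution formulation $(h_2*\widehat{\Phi_\tau})(-Ack_1)$ is an equivalent repackaging of that computation.
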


\begin{lemma}\label{IRL2}
Let $n \geq 5$ and $\vec{\alpha} \in \mathbb{R}^n$ satisfy  $\alpha_j^{-1} =q_j \alpha + q_j^2$ for some $\vec{q} \in \mathbb{Q}^n$ with distinct, non-zero entries such that $q_j \alpha \not = -1$ for all $1 \leq j \leq n$.  Moreover, let $\beta_j = \alpha_j q^{-1}_j $. Then there exists a non-trivial solution $\vec{\#} \in \mathbb{R}^n$ to the system 
\end{lemma}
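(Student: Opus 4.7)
The plan is to mirror the proof of Lemma \ref{IRL} by introducing the substitution $\tilde{\#}_j := \#_j \alpha_j^2$, which converts the five conditions into a system of purely rational linear constraints on the $\tilde{\#}_j$. First I would express each quantity in the statement using $\alpha_j^{-1} = q_j\alpha + q_j^2$ together with $\beta_j = \alpha_j q_j^{-1}$. A direct computation gives $\#_j\alpha_j = \tilde{\#}_j(q_j\alpha + q_j^2)$, $\#_j\alpha_j^2 = \tilde{\#}_j$, $\#_j\beta_j = \tilde{\#}_j(\alpha + q_j)$, $\#_j\beta_j^2 = \tilde{\#}_j q_j^{-2}$, and $\#_j\alpha_j\beta_j = \tilde{\#}_j q_j^{-1}$. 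Substituting into the four equalities and using $\sum_j \tilde{\#}_j = 0$ (coming from the $\alpha_j^2$-condition) to eliminate the irrational parameter $\alpha$ from the $\alpha_j$- and $\beta_j$-conditions, the full system collapses to
\begin{equation*}
\sum_{j=1}^n \tilde{\#}_j = \sum_{j=1}^n \tilde{\#}_j q_j = \sum_{j=1}^n \tilde{\#}_j q_j^2 = \sum_{j=1}^n \tilde{\#}_j q_j^{-2} = 0, \qquad \sum_{j=1}^n \tilde{\#}_j q_j^{-1} \neq 0,
\end{equation*}
whose coefficients all lie in $\mathbb{Q}$.

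The second step is to exhibit a nontrivial $\tilde{\#} \in \mathbb{Q}^n$ solving this reduced system. My approach is to show that the five vectors in $\mathbb{Q}^n$ given by $(q_j^k)_{1 \leq j \leq n}$ for $k \in \{-2,-1,0,1,2\}$ are linearly independent. Granting this, the common $\mathbb{Q}$-kernel of the four equality functionals has dimension at least $n - 4 \geq 1$ and strictly contains the common $\mathbb{Q}$-kernel of all five functionals, so some nontrivial rational $\tilde{\#}$ satisfying the four equalities must violate $\sum_j \tilde{\#}_j q_j^{-1} = 0$. To verify the linear independence, I would multiply any relation $\sum_{k=-2}^{2} c_k q_j^k = 0$ by $q_j^2$, obtaining a polynomial of degree four that vanishes at the $n \geq 5$ distinct nonzero values $q_j$, which forces all $c_k$ to be zero.

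Setting $\#_j := \tilde{\#}_j / \alpha_j^2$ then recovers the desired $\vec{\#} \in \mathbb{R}^n$, with $\#_j \alpha_j^2 \in \mathbb{Q}$ automatic by construction. The only genuine care needed is in the initial bookkeeping step, namely confirming that both $\beta_j$-conditions really do decouple cleanly from $\alpha$ once the $\tilde{\#}$-substitution is in place, and checking that the hypotheses $q_j \neq 0$, the $q_j$ distinct, and $q_j \alpha \neq -1$ collectively legitimize all the divisions above and the Vandermonde-style counting argument. Beyond that, the existence of $\vec{\#}$ follows formally from the two steps outlined above.
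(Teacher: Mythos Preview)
Your proposal is correct and follows essentially the same approach as the paper: the substitution $\tilde{\#}_j=\#_j\alpha_j^2$ reduces the system to $\sum_j\tilde{\#}_jq_j^m=0$ for $m\in\{-2,0,1,2\}$ together with $\sum_j\tilde{\#}_jq_j^{-1}\neq 0$, and then one finds a rational $\tilde{\#}$ by a dimension count. Your Vandermonde argument makes explicit what the paper compresses into the phrase ``by the Gram--Schmidt process,'' but the underlying idea is identical.
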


\begin{eqnarray*}
\sum_{j=1}^n \#_j \alpha_j =  \sum_{j=1}^n \#_j \alpha_j^2 = \sum_{j=1}^n \#_j \beta_j =  \sum_{j=1}^n \#_j \beta_j^2 &=& 0 \\ 
\sum_{j=1}^n \#_j \alpha_j \beta_j &\not =& 0 
\end{eqnarray*}
with the property that $\#_j \alpha_j^2 \in \mathbb{Q}$ for all $1 \leq j \leq n$. 

\begin{proof}
Set $\tilde{\#}_j = \#_j \alpha_j^2$ for all $1 \leq j \leq n$. We seek non-trivial  $\vec{\tilde{\#}}\in \mathbb{Q}^n$ satisfying

\begin{eqnarray*}
\sum_{j=1}^n \tilde{\#}_j \alpha^{-1}_j = \sum_{j=1}^n \tilde{\#}_j (q_j \alpha + q^2_j)= 0 ; ~~\sum_{j=1}^n \tilde{\#}_j  &=& 0 \\ 
\sum_{j=1}^n\tilde{ \#}_j \alpha_j^{-2} \beta_j = \sum_{j=1}^n \tilde{\#}_j (q_j \alpha + q_j^2)q^{-1}_j = 0 ;~~ \sum_{j=1}^n \tilde{\#}_j \alpha_j^{-2} \beta_j^2 = \sum_{j=1}^n \tilde{\#}_j q_j^{-2}&=& 0 \\
\sum_{j=1}^n\tilde{ \#}_j \alpha_j^{-1} \beta_j  = \sum_{j=1}^n \tilde{\#_j}q^{-1}_j & \not =& 0.  
\end{eqnarray*}
Requiring $\sum_{j=1}^n \tilde{\#}_j q_j^m =0$ for all $m \in \{ -2, 0, 1, 2\}$, we may for $n \geq 5$ find $\vec{\tilde{\#}}_j \in \mathbb{Q}^n$ by the Gram-Schmidt process for which $\sum_{j=1}^n \tilde{\#}_j q_j^{-1} \not = 0$.

 \end{proof}

Combining Theorem \ref{LocThm} and Lemma \ref{IRL2}, we obtain
\begin{reptheorem}{MT*}
Let $n \geq 5$ and $\vec{\alpha} \in \mathbb{R}^n$ satisfy $\alpha_j^{-1} = q_j  + \alpha q_j^2$ for some $\vec{q} \in \mathbb{Q}^n$ with distinct, non-zero entires such that $q_j \alpha \not = -1$ for all $1 \leq j \leq n$. Then there exists $m \in \mathcal{M}_{\Gamma^{\vec{\alpha}}}(\mathbb{R}^n)$ supported in $\left\{ \vec{\xi}: dist(\vec{\xi}, \Gamma^{\vec{\alpha}}) \lesssim 1\right\}$ such that $T_m$ satisfies no $L^p$ estimates. 
\end{reptheorem}

Note that if $\alpha \in \mathbb{R} \cap \mathbb{Q}^c$, then $\vec{\alpha} \not \in \mathbb{R}(\mathbb{Q}^n)$. Setting $\alpha =0$ yields that for any $\vec{q} \in \mathbb{Q}^n$ with distinct, non-zero entries, there exists $m \in \mathcal{M}_{\Gamma^{\vec{q}}}(\mathbb{R}^n)$ such that $T_m$ satisfies no $L^p$ estimates. Hence, we have

\begin{corollary}[Generic $n-$linear Hilbert transform satisfies no $L^p$ estimates]
Let $n \geq 5$ and $\alpha_j = j$ for all $1 \leq j \leq n$. Then there exists $m \in \mathcal{M}_{\Gamma^{\vec{\alpha}}}(\mathbb{R}^n)$ supported in $\left\{ \vec{\xi}: dist(\vec{\xi}, \Gamma^{\vec{\alpha}}) \lesssim 1\right\}$ such that $T_m$ satisfies no $L^p$ estimates. 
\end{corollary}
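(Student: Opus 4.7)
The plan is to deduce this statement as an immediate specialization of Theorem \ref{MT*}, with no additional analytic work beyond checking compatibility of hypotheses. The parametrization in Theorem \ref{MT*} demands $\vec{\alpha}$ satisfy $\alpha_j^{-1} = q_j + \alpha q_j^2$ for some $\alpha \in \mathbb{R}$ and some $\vec{q} \in \mathbb{Q}^n$ whose entries are distinct and non-zero, together with the side condition $q_j \alpha \neq -1$ for every $j$. The whole content of the corollary is that the classical $n$-linear Hilbert transform hyperplane $\Gamma^{\vec{\alpha}}$ with $\alpha_j = j$ is covered by this parametrization.

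First I would select $\alpha = 0$, which collapses the defining relation to $\alpha_j^{-1} = q_j$. Setting $\alpha_j = j$ then forces $q_j = 1/j$ for $1 \leq j \leq n$. Next I would verify the remaining requirements on $\vec{q} = (1, 1/2, \ldots, 1/n)$: the entries are rational, distinct, and non-zero because the integers $1, 2, \ldots, n$ are; and the side condition $q_j \alpha = 0 \neq -1$ is trivial. All hypotheses of Theorem \ref{MT*} are therefore in force for this choice of $\vec{\alpha}$.

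Finally I would invoke Theorem \ref{MT*} directly, producing a symbol $m \in \mathcal{M}_{\Gamma^{\vec{\alpha}}}(\mathbb{R}^n)$ supported in $\{\vec{\xi} : \mathrm{dist}(\vec{\xi}, \Gamma^{\vec{\alpha}}) \lesssim 1\}$ such that $T_m$ satisfies no $L^p$ estimates. This is exactly the conclusion of the corollary. There is no real obstacle: the deep work has already been carried out in Theorem \ref{LocThm} (the localization machinery based on Gaussian chirps and Riesz kernels) and in the rational Bohr-set extension of Lemma \ref{IRL2}, and the present corollary is simply the observation that the most natural degenerate hyperplane $\sum_{j=1}^n j \xi_j = 0$ is one of the hyperplanes produced by the trivial rational specialization $\alpha = 0$, $q_j = 1/j$.
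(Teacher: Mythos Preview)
Your proof is correct and is essentially identical to the paper's own derivation: the paper remarks just before the corollary that setting $\alpha = 0$ in Theorem \ref{MT*} gives the result for any $\vec{q} \in \mathbb{Q}^n$ with distinct non-zero entries, and the corollary is then the specialization $q_j = 1/j$. One small wording issue: the hyperplane $\sum_{j=1}^n j\xi_j = 0$ is \emph{non}-degenerate in the sense of Definition~1, not degenerate; and since $\alpha = 0$ lands you in the rational case, the Bohr-set machinery you mention is not actually invoked here.
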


\begin{proof}\textit{(Theorem \ref{LocThm})}. 

\subsection{PART 1: The Rational Case} Assuming $\vec{\alpha}, \vec{\beta} \in \mathbb{Q}^n$, we may always dilate $\vec{\alpha}, \vec{\beta}$  by sufficiently large integers to guarantee $\vec{\alpha} , \vec{\beta} \in \mathbb{Z}^n$. Then our assumption is that there exists $\vec{\#} \in \mathbb{Z}^n$ solving the system 

\begin{eqnarray*}
\sum_{j=1}^n \#_j \alpha_j = \sum_{j=1}^n \#_j \alpha_j^2 = \sum_{j=1}^n \#_j \beta_j = \sum_{j=1}^n \#_j \beta_j^2 &=&0 \\
\sum_{j=1}^n \#_j \alpha_j \beta_j  &\not =& 0.   
\end{eqnarray*}
For $A \in \mathbb{Z}^+$ and $j \in \{1, ..., d\}$ , construct for each $1 \leq j \leq n$ the function

\begin{eqnarray*}
f^{N, A, \#_j}(x) = \sum_{-N \leq m \leq N} \phi(x-A m) e^{2 \pi i A \#_j  m x},
\end{eqnarray*} 
where we now wish to choose $\phi \in \mathcal{S}(\mathbb{R})$ satisfying $supp~ \hat{\phi} \subset [-1,1]$, $\phi \geq 0$, $\phi(0) \not = 0$, and $\phi$ is symmetric about the origin. This is easily done by taking one's favorite non-trivial real-valued non-negative smooth function, symmetric about the origin with compact support in $[-\frac{1}{2},\frac{1}{2}]$, say $\Phi$. Then one need only set $\hat{\phi} = \Phi*\Phi $ to observe 

\begin{eqnarray*}
\phi := \mathcal{F}^{-1} (\Phi* \Phi)= \check{\Phi}^2 \geq 0
\end{eqnarray*}
with $\phi(0) = || \Phi||_{L^1}^2 >0$ in addition to the desired Fourier support and symmetry properties. Moreover, we want to choose $m \in \mathcal{M}_{\vec{\alpha}}(\mathbb{R}^d)$ to satisfy the property that is identically equal to $1_{ \vec{\xi} \cdot \vec{\alpha} \geq 0}1_{\vec{\xi} \cdot \vec{\beta} \leq 0}$ in some neighborhood of the hyperplane $\Gamma^{\vec{\alpha}}$ away from the singularity $\Gamma^{\vec{\beta}}$. That is, $m$ is supported inside $\{dist(\vec{\xi}, \Gamma^{\vec{\alpha}}) \lesssim 1\}$ and if $\vec{\xi} \in \{ dist(\vec{\xi}, \Gamma^{\vec{\alpha}}) \lesssim 1\} \bigcap \{dist(\vec{\xi}, \Gamma^{\vec{\beta}})  \gtrsim dist(\vec{\xi}, \Gamma^{\vec{\alpha}})\}$, then $m(\vec{\xi})= 1_{\vec{\xi} \cdot \vec{\alpha} \geq 0} (\vec{\xi})1_{\vec{\xi} \cdot \vec{\beta} \leq  0}(\vec{\xi})$. The parameter A is to be taken sufficiently large to give us sparseness in frequency and time, which enables to assume that the only $\vec{m}$ which may appears in the integrand of the kernel representation of T are \emph{precisely} those for which both $\sum_{j=1}^n \#_j m_j \alpha_j =0$ and $\sum_{j=1}^n \#_j m_j \beta_j < 0$. Moreover, assuming $\sum_{j=1}^n \#_j m_j \beta_j < 0$ for each such $\vec{m}$, the multiplier $m(\vec{\xi})$  is identically $1_{\vec{\xi} \cdot \vec{\alpha} \geq 0} 1_{\vec{\xi} \cdot \beta \leq 0}$ when restricted to the domain $\prod_{j=1}^n [A\#_j m_j-1, A\#_j m_j +1]$. 
Recall that the n-linear operator given by
\begin{eqnarray*}
T(f_1, , ..., f_n)(x) :=p.v.  \int_{\mathbb{R}^2} \prod_{j=1}^n f_j(x-\alpha_j t - \beta_j s) K_1(t) K_2(s) ds ~dt
\end{eqnarray*}
 has the Fourier representation (defined on Schwartz functions, say) given by

\begin{eqnarray*}
 \int_{\mathbb{R}^n}\hat{K}_1(\vec{\alpha} \cdot \vec{\xi}) \hat{K}_2(\vec{\beta} \cdot \vec{\xi}) \left[ \prod_{j=1}^n \hat{f}_j(\xi_j)  e^{2 \pi i \xi_j x}  \right] d\vec{\xi}.
\end{eqnarray*}
Thus, we may rewrite $T_{ \hat{K}_1(\vec{\alpha} \cdot \vec{\xi}) \hat{K}_2(\vec{\beta} \cdot \vec{\xi})[\vec{\alpha}, \vec{\beta}, \Phi, \Psi]}$ acting on $\vec{f}^{N,A , \vec{\#}}$ as 

\begin{eqnarray*}
&& T_{ \hat{K}_1(\vec{\alpha} \cdot \vec{\xi}) \hat{K}_2(\vec{\beta} \cdot \vec{\xi})[\vec{\alpha}, \vec{\beta}, \Phi, \Psi]}\left(\vec{f}^{N,A , \vec{\#}}\right) (x)\\&=& \sum_{(k,l) \in \mathbb{Z}^2} \sum_{\vec{m} \in \mathbb{M}} \int_{A(k-\frac{1}{2})} ^{A(k+\frac{1}{2})} \int_{A(l-\frac{1}{2})} ^{A(l + \frac{1}{2})}\prod_{j=1}^n \left[\phi(x-Am_j - \alpha_j t - \beta_j s)  e^{2 \pi i A  \#_j m_j (x-\alpha_j t - \beta_j s)} \right]K_1(s) K_2(t) ds dt,
\end{eqnarray*}
where
\begin{eqnarray*}
\mathbb{M} &:=& \left\{ \vec{m}  \in \mathbb{Z}^n:-N \leq m_j \leq N~\forall~j \in \{1, ...,n\}, ~\sum_{j=1}^n m_{j} \#_j \alpha_j  = 0,~ \sum_{j=1}^n m_j \#_j \beta_j < 0\right\} .
\end{eqnarray*}

\begin{lemma}\label{RdL2}
Let 
\begin{eqnarray*}
T^{k,0}_{ \hat{K}_1(\vec{\alpha} \cdot \vec{\xi}) \hat{K}_2(\vec{\beta} \cdot \vec{\xi})[\vec{\alpha}, \vec{\beta}, \Phi, \Psi]}  \left(\left\{ f_{m_j}^{N,A , \#_j} \right\}_{j=1}^n\right) (x) := \int_{A(k-\frac{1}{2})}^{A(k+\frac{1}{2})}  \int_{-\frac{A}{2}}^{\frac{A}{2}}\left[  \prod_{j=1}^n f^{N, A, \#_j}_{ m_j}(x-\alpha_j t -\beta_j s) \right] K_1(s) K_2(t)~ ds dt.
\end{eqnarray*}
To prove Theorem \ref{MT*}, it suffices to show $\exists c_{\vec{\alpha}, \vec{\beta}} >0$ such that for every $x \in \left[ An_0, An_0 + \frac{c_{\vec{\alpha}, \vec{\beta}}}{A} \right]$, $k \in \left[ k_0, \frac{N}{3 \max_{1 \leq j \leq n} \{ |\alpha_j| \} } \right]$, and $n_0 \in [-N/3,N/3]$, the following estimate holds:

\begin{eqnarray*}\label{Rd2}
\sum_{1 \leq k \leq \frac{N}{C_{\vec{\alpha}, \vec{\beta}}}} Im\left[ e^{-2 \pi i A\left(\sum_{j=1}^n \#_j\right) n_0 x} T^{(k,0)}_{ \hat{K}_1(\vec{\alpha} \cdot \vec{\xi}) \hat{K}_2(\vec{\beta} \cdot \vec{\xi})[\vec{\alpha}, \vec{\beta}, \Phi, \Psi]}  \left( \left\{ f_{m_j}^{N,A , \#_j} \right\}_{j=1}^n\right) (x)\right]  \gtrsim \frac{\log(N)}{A}.
\end{eqnarray*}

\end{lemma}
Before proving the lemma, we verify the lower bound in the above display.

\subsection{Main Contribution}

The condition for the term corresponding to $\vec{m}$ to be in our sum is $0 \leq A\sum_{j=1}^n \#_j m_j \alpha_j \lesssim 1$, which gives that $\sum_{j=1}^n \#_j m_j \alpha_j =0$ by the integrality of $\vec{\#}, \vec{\alpha}$ and $\vec{m}$. Now choose any $c_{\vec{\alpha}, \vec{\beta}} \in \mathbb{N}$ such that $c_{\vec{\alpha}, \vec{\beta}} > 10 ( max_{j \in \{1, ..., n\}} \{|\alpha_j|\}+ max_{j \in \{1, ..., n\}} \{|\beta_j|\} )$ and fix $x \in \left[An_0, An_0+\frac{c_{\vec{\alpha}, \vec{\beta}}}{A}\right]$ for some $n_0 \in [-N/3, N/3]$. We expect the largest contribution to come from the terms where $n_0 - m_j - \alpha_j k - \beta_j l =0$ for each $j \in \{1, ..., n\}$. Of course, by the frequency restrictions imposed by $0 \leq \vec{\xi} \cdot \vec{\alpha} \lesssim 1$ and $\vec{\xi} \cdot \vec{\beta}  \leq0$, we have that $l=0$ and $k >1$ for A sufficiently large.  To this end, construct $\vec{m}_{k,l} := n_0 - \vec{\alpha} k - \vec{\beta} l$ for each $1 \leq k \leq \frac{N}{C_{\vec{\alpha}, \vec{\beta}}}$. Then, observe

\begin{eqnarray*}
\sum_{j=1}^n \#_j m_j \alpha_j = \sum_{j=1}^n \#_j (n_0-\alpha_j k - \beta_j l  )\alpha_j =0
\end{eqnarray*}
must be satisfied, which, provided $\sum_{j=1}^n \alpha_j \beta_j \#_j \not = 0$, requires $l = 0$. For the condition $\sum_{j=1}^n \#_j m_j \beta_j <0$, we require $k \geq  1$. (Take A sufficiently large to guarantee that all and only those $k \geq 1$ arise in the in sum over $k,l$ corresponding to $m_j (k,l)= n_0 - \alpha_j k - \beta_j l$.) The main contribution can be expressed in this new notation as 

\begin{eqnarray*}
\sum_{1 \leq k \leq \frac{N}{C_{\vec{\alpha}, \vec{\beta}}}} T^{k,0}_{ \hat{K}_1(\vec{\alpha} \cdot \vec{\xi}) \hat{K}_2(\vec{\beta} \cdot \vec{\xi})[\vec{\alpha}, \vec{\beta}, \Phi, \Psi]}  \left( \left\{ f_{m_j(n_0, k,0)}^{N,A ,\#_j} \right\}_{j=1}^n\right) (x)
\end{eqnarray*}
 Let $\sum_{j=1}^n \#_j \alpha_j \beta_j=: \mathfrak{C}>0$. Fix a single term in the above sum, set $C( x)=e^{-2 \pi i A(\sum_{i=1}^n \#_i)n_0  x}$ and compute using Lemma \ref{ML7}

\begin{eqnarray*}
&& Im\left[C(x) \cdot T^{k,0}_{ \hat{K}_1(\vec{\alpha} \cdot \vec{\xi}) \hat{K}_2(\vec{\beta} \cdot \vec{\xi})[\vec{\alpha}, \vec{\beta}, \Phi, \Psi]}\left( \left\{ f_{ m_j(n_0, k,0)}^{N,A , \#_j} \right\}_{j=1}^n\right) (x)\right] \\&=& \left. Im \left[ C( x) \cdot \int_{A(k-\frac{1}{2})} ^{A(k+\frac{1}{2})} \int_{A(l-\frac{1}{2})} ^{A(l + \frac{1}{2})} \prod_{j=1}^n \left[ \phi(x-Am_j- \alpha_j t - \beta_j s)  e^{2 \pi i A \#_j m_j (x-\alpha_j t - \beta_j s)} \right]\ K_1(s) K_2(t) ds dt\right]   \right|_{m_j = n_0 -\alpha_j  k} \\&=& Im\left[  \int_{-\frac{A}{2}} ^{\frac{A}{2}} \int_{-\frac{A}{2}} ^{\frac{A}{2}} \prod_{j=1}^n \left[ \phi(\theta_x-\alpha_j t - \beta_j s) \right] e^{2 \pi i  A\mathfrak{C} ks}K_1(s) K_2(t + Ak) ds dt\right] \\ &\gtrsim& \frac{D_{\phi, \vec{\alpha}, A, C_1, C_2 }}{Ak} \\&\gtrsim& \frac{D_{\phi, \vec{\alpha}, \infty, C_1, C_2}}{Ak}.
\end{eqnarray*}
Summing over $1 \leq k \leq \frac{N}{c_{\vec{\alpha}, \vec{\beta}}}$ yields a total contribution  $\sim \frac{\log(N)}{A}$. Therefore, setting 

\begin{eqnarray*}
\mathfrak{R}(n_0)= \left[ \bigcup_{ k \in \left[1, \frac{N}{3 \max_{1\leq j \leq n} \{ |\alpha_j| \} } \right]} \{k\} \times \{0\} \times \left\{\vec{m}(n_0, k,0) \right\} \right]^c \bigcap  \left( \mathbb{Z} \times \mathbb{Z} \times \mathbb{M} \right),
\end{eqnarray*} we are left with satisfactorily estimating 

\begin{eqnarray*}
Im \left[ e^{-2 \pi i A (\sum_{j=1}^n \#_j) n_0 x} \sum_{(k,l, \vec{m}) \in \mathfrak{R}(n_0)} T^{k,l}_{ \hat{K}_1(\vec{\alpha} \cdot \vec{\xi}) \hat{K}_2(\vec{\beta} \cdot \vec{\xi})[\vec{\alpha}, \vec{\beta}, \Phi, \Psi]}\left( \left\{ f_{m_j}^{N,A , \#_j} \right\}_{j=1}^n \right) (x) \right].
\end{eqnarray*}
 The rest of this section is dedicated to proving Lemma \ref{RdL2}.

\subsection{Small Perturbations}
\subsubsection{$1 \leq k \lesssim N$}
For each $n_0 \in [N/2,2 N/3]$, $k \in \left[ 1, \frac{N}{3 \max_{1 \leq j \leq n} \{ |\alpha_j| \} } \right]$, and $l \in \mathbb{Z}$ consider those $\vec{m} \in \mathbb{M}$ satisfying $m_j= m_j (n_0, k,l) + \Delta_j$ where $0< \sup_{1 \leq j \leq n} |\Delta_j| \leq max_{1 \leq j \leq n} \{ |\alpha_j|+ |\beta_j|\}$ along with $\sum_{j=1}^n \#_j m_j \alpha_j =0$ and $\sum_{j=1}^n \#_j m_j \beta_j <0$. For each $(k,l) \in \mathbb{Z} \times \mathbb{Z}$, denote this collection by $\mathbb{M}^{S}_{n_0, k,l}$ and define the collection of large perturbations by $\mathbb{M}^L_{n_0, k,l}$ by the relation

\begin{eqnarray*}
\mathbb{M} &=& \mathbb{M}^S_{n_0, k,l} \coprod \mathbb{M}^L_{n_0, k,l}\coprod \left(  \left\{ \vec{m}(n_0, k,0) \right\} \cap \mathbb{M} \right) 
\end{eqnarray*}
Hence, for every $n_0 \in \mathbb{Z}$

\begin{eqnarray*}
\mathbb{Z} \times \mathbb{Z} \times \mathbb{M}= \left[ \bigcup_{(k,l)  \in \mathbb{Z}^2}  \{ k\} \times \{l\} \times \mathbb{M}^S_{n_0, k,l} \right]  \coprod \left[ \bigcup_{(k,l) \in \mathbb{Z}^2} \{ k\} \times \{l\} \times \mathbb{M}_{n_0, k,l}^{L} \right] \coprod \left[ \bigcup_{k \in \mathbb{Z}} \{k\} \times \{0\}\times  \{ \vec{m}(n_0, k,0) \cap \mathbb{M}\}\right] .
\end{eqnarray*}
Now observe for every $\vec{m} \in \mathbb{M}_{n_0, k,l}^S$
\begin{eqnarray*}
\sum_{j=1}^n \#_j m_j (x-\alpha_j t - \beta_j s)&=& \sum_{j=1}^n \#_j m_j (x-\beta_j s) \\ &=&  \sum_{j=1}^n \#_j m_j (n_0, k,l)(x - \beta_j s) + \sum_{j=1}^n \#_j \Delta_j  (x- \beta_j s) \\ &=& \left(\sum_{j=1}^n \#_j \alpha_j \beta_j \right)  ks + \left( \sum_{j=1}^n \#_j (\Delta_j+n_0) \right) x  - \left( \sum_{j=1}^n \#_j \Delta_j \beta_j \right) s. 
\end{eqnarray*}
Therefore, setting $C(x, \vec{\Delta}) = e^{-2 \pi i A \left[ \sum_{j=1}^n \#_j (\Delta_j+n_0)\right] x}$, it follows that
\begin{eqnarray*}
&& C_{\vec{\Delta}}( x) \cdot T^{k,l}_{ \hat{K}_1(\vec{\alpha} \cdot \vec{\xi}) \hat{K}_2(\vec{\beta} \cdot \vec{\xi})[\vec{\alpha}, \vec{\beta}, \Phi, \Psi]}\left( \left\{ f_{j,m_j}^{N,A , \#_j} \right\}_{j=1}^n \right)  (x)
\\&=&\int_{A(k-\frac{1}{2})} ^{A(k+\frac{1}{2})} \int_{A(l-\frac{1}{2})} ^{A(l+\frac{1}{2})}\left[ \prod_{j=1}^n \left[\phi(x-Am_j - \alpha_j t - \beta_j s)  \right]  \right] e^{2 \pi i A\left[  \sum_{j=1}^n \#_j (\alpha_j k-\Delta_j)  \beta_j \right] s}  K_1(s) K_2(t) ds dt. 
\end{eqnarray*}
Fix $\epsilon>0$ and choose $c_{\vec{\alpha}, \vec{\beta}}$ sufficiently small to ensure that $\forall x \in \left[An_0 , An_0 + \frac{c_{\vec{\alpha}, \vec{\beta}}}{A} \right]$

\begin{eqnarray*}
Re\left[ e^{2 \pi i A \sum_{j=1}^n \#_j \Delta_j x} \right] = Re\left[  e^{2 \pi I A \left( \sum_{j=1}^n \#_j \Delta_j \right)\theta_x} \right]  > \gamma~for~some~\gamma: |\gamma - 1|<\epsilon << 1.
\end{eqnarray*}
Then $Im \left[ e^{2 \pi i A \left( \sum_{j=1}^n \#_j \Delta_j \right)x} \right] <\sqrt{1-\gamma^2} $. Fixing $n_0, k$ and summing the quantity
 \begin{eqnarray*}
 && Im \left[ e^{-2 \pi i A \sum_{j=1}^n \#_j n_0 x} T^{(k,l)}_{ \hat{K}_1(\vec{\alpha} \cdot \vec{\xi}) \hat{K}_2(\vec{\beta} \cdot \vec{\xi})[\vec{\alpha}, \vec{\beta}, \Phi, \Psi]}  (\vec{f}_{\vec{m}}^{N,A , \vec{\#}}) (x)   \right]\\ \geq && \gamma Im \left[  \int_{A(k-\frac{1}{2})} ^{A(k+\frac{1}{2})} \int_{A(l-\frac{1}{2})} ^{A(l+\frac{1}{2})}\left[ \prod_{j=1}^n \phi(x-Am_j-\alpha_j t-\beta_j s)   e^{2 \pi i A \#_j (\alpha_j k - \Delta_j)\beta_j  s}  \right] K_1(s) K_2(t) ds dt \right] \\-&& \sqrt{1-\gamma^2} \left| \int_{A(k-\frac{1}{2})} ^{A(k+\frac{1}{2})} \int_{A(l-\frac{1}{2})} ^{A(l+\frac{1}{2})} \left[\prod_{j=1}^n\phi(x-Am_j - \alpha_j t - \beta_j s)  \right] e^{2 \pi i A\left[ \sum_{j=1}^n \#_j (\alpha_jk - \Delta_j ) \beta_j \right] s} K_1(s) K_2(t) ds dt \right| \\ \geq &&-  \sqrt{1-\gamma^2} \left| \int_{A(k-\frac{1}{2})} ^{A(k+\frac{1}{2})} \int_{A(l-\frac{1}{2})} ^{A(l+\frac{1}{2})} \left[\prod_{j=1}^n\phi(x-Am_j - \alpha_j t - \beta_j s)  \right] e^{2 \pi i A\left[ \sum_{j=1}^n \#_j (\alpha_jk - \Delta_j ) \beta_j \right] s} K_1(s) K_2(t) ds dt \right| 
\end{eqnarray*}
over all $l \in \mathbb{Z}$ and $\vec{m} \in \mathbb{M}^S_{n_0, k,l}$ yields for all sufficiently large $k$ and sufficiently small $\epsilon(A)$ a lower bound of $-\frac{1}{A^2k}$. Indeed, this is true for the following $3$ reasons: Lemma \ref{ML7} guarantees that $\gamma Im\left[ \cdot \right] \geq 0$;  for fixed $(n_0,k,l)$, $ \# \left[ \mathbb{M}^S_{n_0, k, l} \right] = O(1)$;  for fixed $(n_0, k)$, the collection $\mathbb{M}^S_{n_0, k,l}$ is empty for all but $O(1)$ distinct $l \in \mathbb{Z}$. To observe this last claim, write down $\vec{m} = \vec{m} (n_0, k,l) + \vec{\Delta} \in \mathbb{M}$. Then $\sum_{j=1}^n \#_j m_j \alpha_j =0$ implies $-\mathfrak{C}l + \sum_{j=1}^n \#_j \alpha_j \Delta_j=0$.   Therefore, $|l| \lesssim_{\vec{\alpha}} 1$, and the small perturbations $\mathbb{M}^S_{n_0,k,l}$ produce an acceptable error when $1 \leq k \lesssim N$. 

\subsubsection{$k <<0$}
If $\sum_{j=1}^n \#_j m_j \beta_j <0$ and $\vec{m} \in \mathbb{M}^S_{n_0, k,l}$ for some $k <<0$, then

\begin{eqnarray*}
0 > \sum_{j=1}^n \#_j m_j \beta_j = - \mathfrak{C} k + \sum_{j=1}^n \#_j \Delta_j \beta_j = -\mathfrak{C} k + O(1).
\end{eqnarray*}
Therefore, $\mathbb{M}^S_{n_0, k,l} = \emptyset$ for all $k <<0$.  

\subsubsection{$|k| \lesssim 1$}
The total contribution is $O(1)$, which is acceptable error in light of the main contribution $O(\log(N))$. 
\subsubsection{$k \simeq N$}
The total contribution is $O(1)$, which is acceptable error in light of the main contribution. 
\subsubsection{$k >> N$}
If $\sum_{j=1}^n \#_j m_j \beta_j <0$ and $\vec{m} \in \mathbb{M}^S_{n_0, k,l}$ for some $k >>N$, then 

\begin{eqnarray*}
m_j = m_j(n_0, k,l) + \Delta_j = n_0 - \alpha_j k - \beta_j l+\Delta_j. 
\end{eqnarray*}
From previous considerations, $|l| =O(1).$ Therefore, $|m_j| \geq |\alpha_j k| - n_0 -|\beta_j l| >>N$, which contradicts $\vec{m} \in \mathbb{M}$. Hence, $\mathbb{M}^S_{n_0, k,l} = \emptyset$ for all $ k >>N.$

\subsection{Large Perturbations}

It now suffices to bound the contribution of $\mathbb{M}_{n_0, k,l}^L$. 
\subsubsection{$k < 0$}

For each $(k,l) \in \mathbb{Z} \times \mathbb{Z}, ~\exists j_* \in \{1, ..., n\}$ satisfying 

\begin{eqnarray*}
|m_{j_*} - n_0 - \alpha_{j_*} k- \beta_{j_*} l| \geq \frac{|l|}{5 n \cdot \sup_{1 \leq j \leq n} |\#_j \alpha_j|}.
\end{eqnarray*}
The proof is an easy contradiction argument. If $|m_j - n_0 - \alpha_{j} k - \beta_j l| < \frac{|l|}{5n \cdot \sup_{1 \leq j \leq n} |\#_j \alpha_j|}$ for each $j \in \{1, ..., n\}$, 

\begin{eqnarray*}
\sum_{j=1}^n \#_j m_j \alpha_j = \sum_{j=1}^n \#_j (m_j - n_0 - \alpha_j k - \beta_j l) \alpha_j + \sum_{j=1}^n \#_j (n_0 + \alpha_j k + \beta_j l) \alpha_j := I + II. 
\end{eqnarray*}
By assumption, $\left| ~I~\right| \leq n \left[ \sup_{1 \leq j \leq n} |\#_j \alpha_j| \right] \frac{|l|}{5n \cdot \sup_{1 \leq j \leq n} |\#_j \alpha_j|} = \frac{|l|}{5}.$ Moreover, $II \geq l$. Therefore, $\sum_{j=1} ^n \#_j m_j \alpha_j \geq \frac{4|l|}{5}$, which contradicts the frequency restrictions imposed by the multiplier. Therefore, one is free to extract the decay $\frac{1}{l^{\tilde{C}}}$ for  any $C>>1$. Similarly,  for each $(k,l) \in \mathbb{Z}^- \times \mathbb{Z}, ~\exists j_* \in \{1, ..., n\}$ such that
\begin{eqnarray*}
 |m_{j_*} - n_0 - \alpha_{j_*} k- \beta_{j_*}l| \geq \frac{|k|}{5n\cdot \sup_{1 \leq j \leq n} |\#_j \beta_j|}. 
\end{eqnarray*} Indeed, suppose $|m_j - n_0 - \alpha_j k| < \frac{|k|}{5n}$ for all $j \in \{1, ..., n\}$. Then $\sum_{j=1}^n m_j \#_j \beta_j \leq 0$ and yet 

\begin{eqnarray*}
\left( \sum_{j=1}^n \#_j m_j \beta_j\right) \geq -\frac{4k}{5} \left( \sum_{j=1}^n \#_j \alpha_j \beta_j \right)  >0
\end{eqnarray*}
This contradicts the restriction that $\sum_{j=1}^n \#_j m_j \beta_j < 0$. If $k=0$, then Hence, summing over all $\vec{m} \in \mathbb{M}^L_{n_0, k,l}$ along with $(k,l) \in \mathbb{Z}^- \times \mathbb{Z}$ yields an acceptable error term, i.e.

\begin{eqnarray*}
&& \sum_{k \leq 0} \sum_{l \in \mathbb{Z}} \sum_{\vec{m} \in \mathbb{M}^L_{n_0, k,l}} \left|  T^{k,l}_{ \hat{K}_1(\vec{\alpha} \cdot \vec{\xi}) \hat{K}_2(\vec{\beta} \cdot \vec{\xi})[\vec{\alpha}, \vec{\beta}, \Phi, \Psi]}\left(\vec{f}_{\vec{m}}^{N,A , \vec{\#}}\right) (x) \right|\\
 &\leq& \sum_{k \leq 0} \sum_{l \in \mathbb{Z}}\sum_{\vec{m} \in \mathbb{M}} \left| \int_{A(k-\frac{1}{2})} ^{A(k+\frac{1}{2})} \int_{A(l-\frac{1}{2})} ^{A(l+\frac{1}{2})}\prod_{j=1}^n \left[\phi(x-Am_j - \alpha_j t - \beta_j s)  e^{2 \pi i A  \#_j m_j (x-\alpha_j t - \beta_j s)} \right] K_1(s) K_2(t) ds dt \right| \\& \lesssim & \sum_{k,l \in \mathbb{Z}} \sum_{\vec{n} \in \mathbb{Z}^n} \frac{1}{(1+|l|^{\tilde{C}} )(1+|k|^{\tilde{C}})} \prod_{j=1}^n \frac{1}{1+|n_j|^{\tilde{C}}} \\&\lesssim& 1. 
 \end{eqnarray*}

\subsubsection{$1\leq k \lesssim N$}
This contribution is slightly more delicate and will feature the same $\log(N)$ growth as the main contribution. Note that $\sum_{j=1}^n \#_j \alpha_j m_j = 0$ requires the existence of some index $j _*\in \{ 1, ..., n\}$ for which 

\begin{eqnarray*}
\left|  m_{j_*} - n_0- \alpha_{j_*} k - \beta_{j_*} l \right| \geq  \frac{|~l~|}{5n \cdot \sup_{1 \leq j \leq n} |\#_j \alpha_j|}. 
\end{eqnarray*}
Hence, the total contribution will be $O\left( \frac{\log(N)}{A^M} \right)$, which is acceptable in light of the $O\left(\frac{\log(N)}{A}\right)$ contribution of the main terms by taking a large enough absolute constant $A$. 

\subsubsection{$k \simeq N$}
As before, we shall have $O\left( \frac{1}{1+l^{\tilde{C}}}\right)$ decay. The summation over $k \simeq N$ is harmless owing to $\sum_{k \simeq N}\frac{1}{k} \simeq 1$. 

\subsubsection{$k >>N$}
The decay is $O\left( \frac{1}{l^{\tilde{C}}} \right) \cdot O\left( \frac{1}{|k|^{\tilde{C}}}\right)$.  This concludes the estimates for 
\begin{eqnarray*}
Im \left[ e^{-2 \pi i A\left( \sum_{j=1}^ n\#_j \right)n_0 x } T^{k,l}_{ \hat{K}_1(\vec{\alpha} \cdot \vec{\xi}) \hat{K}_2(\vec{\beta} \cdot \vec{\xi})[\vec{\alpha}, \vec{\beta}, \Phi, \Psi]}\left(\vec{f}_{\vec{m}}^{N,A , \vec{\#}}\right) (x) \right]
\end{eqnarray*}
and hence the proofs of Lemma \ref{ML} and Theorem \ref{MT*} in the rational case. 
\subsection{PART 2: The Irrational Case}

Let $\vec{\alpha}, \vec{\beta} \in \mathbb{R}^n$ satisfy $\beta_j =  z_j \alpha_j$ for all $1 \leq j \leq n$ where $\vec{z} \in \mathbb{Z}^n$. Assume there exists  $\vec{\#} \in \mathbb{R}^n$ such that

\begin{eqnarray*}
\sum_{j=1}^n \#_j \alpha_j = \sum_{j=1}^n \#_j \alpha_j^2 = \sum_{j=1}^n \#_j \beta_j = \sum_{j=1}^n \#_j \beta_j^2 &=&0 \\
\sum_{j=1}^n \#_j \alpha_j \beta_j  & \not =& 0
\end{eqnarray*}
in addition to the condition $\#_j \alpha_j^2  \in \mathbb{Q}$. By dilating $\vec{\#}$, we may assume $\#_j \alpha_j^2 \in \mathbb{Z}$ for all $ 1\leq j \leq n$ and hence $\#_j \alpha_j \beta_j, \#_j \beta_j^2 \in \mathbb{Z}$ for all $ 1 \leq j \leq n$. 
For $A \in \mathbb{Z}^+$ and $j \in \{1, ..., d\}$ , construct the functions 

\begin{eqnarray*}
f^{N, A, \#, \alpha}(x) = \sum_{-N \leq m \leq N} \phi(x-A \alpha m) e^{2 \pi i A \# \alpha m x}.
\end{eqnarray*} 
Moreover, set $S(\vec{\alpha}) = \{ 1/\alpha_1,...,  1/ \alpha_n\}, \rho=1/A^2$, and $Bohr_{c(\vec{\alpha} )N}(S(\vec{\alpha}), \rho) = \{ N_1, ..., N_{|Bohr_{c(\vec{\alpha}) N}(S(\alpha), \rho)|} \}$. Just as before, $\left| Bohr_{c(\vec{\alpha})N}(S(\vec{\alpha}), \rho)  \right|\simeq_A N$. Fix $n_0 \in Bohr_{c(\vec{\alpha})N}(S(\vec{\alpha}), \rho)$ and set $\mathcal{N}^{n_0}_j$ equal to the closest integer to $\alpha_j^{-1} n_0$ for each $1 \leq j \leq n$. Set 

\begin{eqnarray*}
\Omega := \bigcup_{n_0 \in  Bohr_{c(\vec{\alpha})N}(S(\vec{\alpha}), \rho)} [ An_0, An_0 + c_{\vec{\alpha}, \vec{\beta}}/A].
\end{eqnarray*}
Then the theorem in the irrational case will follow from the pointwise estimate

\begin{eqnarray*}
\left| T_{\hat{K}_1(\vec{\alpha} \cdot) \hat{K}_2(\vec{\beta} \cdot)[\vec{\alpha}, \vec{\beta}, \Phi, \Psi]} \left( \left\{ f_{m_j}^{N, A, \#_j ,\alpha_j } \right\}_{j=1}^n \right)(x)\right| \gtrsim_A \log(N) 1_{\Omega}(x)~\forall~x \in \mathbb{R}.
\end{eqnarray*}
To justify the claim, let us first calculate the main contribution. To this end, assume $t = \tilde{t} +Ak$, where $|\tilde{t}| \leq A/2$ and observe for $1 \leq k \leq \frac{N}{C_{\vec{\alpha}, \vec{\beta}}}$

\begin{eqnarray*}
&& \sum_{j=1}^n \#_j\alpha_j (-k + \mathcal{N}^{n_0}_j)(x-\alpha_j t - \beta_j s) \\ &=& \sum_{j=1}^n \#_j (n_0 - \alpha_j k + \delta(n_0, j)) (x- \alpha_j \tilde{t} - A \alpha_j k - \beta_j s) \\ &=& \sum_{j=1}^n \#_j  n_0 x + \sum_{j=1}^n A \#_j \delta(n_0, j) n_0 + \sum_{j=1}^n \#_j \delta(n_0, j) (\theta_x - \alpha_j \tilde{t} - A\alpha_j k - \beta_j s)+ \mathfrak{C} ks \\&=& D(x) -\sum_{j=1}^n \#_j \delta(n_0, j) \alpha_j \tilde{t} + \left[ \mathfrak{C} k - \sum_{j=1}^n \#_j \delta(n_0, j) \beta_j \right]s - A \sum_{j=1}^n \#_j \delta(n_0, j) \alpha_j k,
\end{eqnarray*}
where $D(x) :=  \sum_{j=1}^n \#_j n_0 x + \sum_{j=1}^n A \#_j \delta(n_0, j) n_0 + \sum_{j=1}^n \#_j \delta(n_0, j)\theta_x$. Moreover, 

\begin{eqnarray*}
A \sum_{j=1}^n \#_j \delta(n_0, j) \alpha_j k = A \sum_{j=1}^n \#_j\alpha_j (n_0 - \alpha_j \mathcal{N}^{n_0}_j) \in \mathbb{Z}. 
\end{eqnarray*}
Therefore, letting $m_j(n_0, k) = -k + \mathcal{N}^{n_0}_j$ and $\sum_{j=1}^n \#_j \alpha_j \beta_j = \mathcal{C}$, 
\begin{eqnarray*}
e^{ 2 \pi i A \sum_{j=1}^n \#_j \alpha_j m_j(n_0, k) (x-\alpha_j t - \beta_j s)} = e^{2 \pi iA  D(n_0, x)} e^{- 2\pi i A \sum_{j=1}^n \#_j \delta(n_0, j) \alpha_j  \tilde{t}} e^{2 \pi i A\left[ \mathfrak{C} k - \sum_{j=1}^n \#_j \delta(n_0, j) \beta_j \right]s  }.
\end{eqnarray*}
Using $|\delta(n_0, j)| \leq_{\vec{\alpha}} \frac{1}{A^2}$, we may deduce for every $|x- n_0| = |\theta_x| \lesssim_{A, \{\vec{\alpha}\}, \{\vec{\beta}\}} 1$ the lower bound

\begin{eqnarray*}
Im \left[ e^{-2 \pi i A D(x)} T^{k,0}_{ \hat{K}_1(\vec{\alpha} \cdot \vec{\xi}) \hat{K}_2(\vec{\beta} \cdot \vec{\xi})[\vec{\alpha}, \vec{\beta}, \Phi, \Psi]}\left(\left\{ f_{m_j(n_0,k)}^{N,A , \#_j , \alpha_j} \right\}_{j=1}^n \right) (x) \right] \gtrsim \frac{1}{Ak}. 
\end{eqnarray*}

\subsection{Small Perturbations: $1 \leq k \lesssim N$}
For each $n_0 \in [c_AN, N] \cap Bohr_N(S, \rho(\vec{\alpha}))$ and $k \in \left[ 1, d_{\vec{\alpha}, A}N \right]$, Letting $m_j (n_0, k,l) =-k + z_j l + \mathcal{N}^{n_0}_j$ and using $\sum_{j=1}^n \#_j \alpha_j^2 m_j =0$ for all $\vec{m} \in \mathbb{M}$ yields for all $\vec{m} = \vec{m}_j (n_0, k,l) + \Delta_j \in \mathbb{M}$

\begin{eqnarray*}
&& \sum_{j=1}^n \#_j \alpha_j  (m_j(n_0, k,l) +\Delta_j) ( x- \alpha_j t - \beta_j s) \\ &=& \sum_{j=1}^n \#_j \alpha_j (-k - z_j l+ \mathcal{N}^{n_0}_j) (x  - \beta_j \tilde{s}- A \beta_j l ) + \sum_{j=1}^n \#_j \alpha_j \Delta_j (x- \beta_j \tilde{s}- A \beta_j l ) \\ &=& \sum_{j=1}^n \#_j \alpha_j \left[  \mathcal{N}^{n_0}_j - z_j l + \Delta_j \right] x + \left[ \mathcal{C}k  - \sum_{j=1}^n \#_j \alpha_j (\Delta_j+\mathcal{N}^{n_0}_j  )\beta_j \right] \tilde{s}  + Z \\&=& I + II + Z, 
\end{eqnarray*}
where $Z \in \mathbb{Z}$. For term $I$, we may note $\sum_{j=1}^n \#_j \alpha_j z_j =\sum_{j=1}^n \#_j \beta_j =0$ as well as 

\begin{eqnarray*}
\sum_{j=1}^n \#_j \alpha_j \Delta_j x =   \sum_{j=1}^n \#_j \alpha_j \Delta_j (A (\alpha_j \mathcal{N}^{n_0}_j + \delta(n_0, j))+ \theta_x) = Z_1 +  \sum_{j=1}^n \#_j \alpha_j \delta_j (A  \delta(n_0, j) + \theta_x), 
\end{eqnarray*}
where $Z_1 \in \mathbb{Z}$. This remainder is acceptable using $A |\delta(n_0,j)| , |\theta_x| \lesssim \frac{1}{A}$. To handle term $II$, rewrite

\begin{eqnarray*}
\sum_{j=1}^n \#_j \alpha_j \mathcal{N}^{n_0}_j \beta_j \tilde{s} = \sum_{j=1}^n \#_j (n_0+ \delta(n_0, j)) \beta_j \tilde{s} = \sum_{j=1}^n \#_j \delta(n_0, j) \beta_j \tilde{s}.
\end{eqnarray*}
Therefore, $II = [\mathfrak{C} k + O(1/A^2)] \tilde{s}$, 
and applying Lemma \ref{ML} gives a satisfactory lower bound of $\frac{C}{Ak}$ for
\begin{eqnarray*}
 Im\left[ e^{- 2 \pi i A \sum_{j=1}^n \#_j \alpha_j \mathcal{N}^{n_0}_j x} \int_{A(k-\frac{1}{2})} ^{A(k+\frac{1}{2})} \int_{A(l-\frac{1}{2})}^{A(l+\frac{1}{2})} \prod_{j=1}^n \left[ \phi(x - Am_j - \alpha_j t - \beta_j s) e^{2 \pi i A\#_j m_j (x-\alpha_j t - \beta_j s)} \right] K_1(s) K_2(t) ds dt \right].
\end{eqnarray*}

\subsection{Large Perturbations}
\subsubsection{$k <0, l \in \mathbb{Z}, m \in \mathbb{M}$}
For each $(k,l) \in \mathbb{Z} \times \mathbb{Z},~ \exists j_* \in \{1, ..., n\}$ satisfying 

\begin{eqnarray*}
|\alpha_{j_*}m_{j_*} - n_0 - \alpha_{j_*} k - \beta_{j_*}l| \geq \frac{|l|}{5n \cdot \sup_{1 \leq j \leq n} |\#_j \alpha_j|}.
\end{eqnarray*}
Indeed, if not, then $|\alpha_{j_*}m_{j_*} - n_0 - \alpha_{j_*} k - \beta_{j_*}l| \leq \frac{|l|}{5n \cdot \sup_{1 \leq j \leq n} |\#_j \alpha_j|}$ for all $1 \leq j \leq n$, and so 

\begin{eqnarray*}
\sum_{j=1}^n \#_j \alpha_j^2 m_j &=& \sum_{j=1}^n \#_j \alpha_j (\alpha_jm_j - n_0 - \alpha_j k - \beta_j l) + \sum_{j=1}^n \#_j \alpha_j (n_0 - \alpha_j k - \beta_j l) \\ &=& I + II. 
\end{eqnarray*}
By assumption, $|I| \leq \frac{|l|}{5}$. Moreover, $II = \sum_{j=1}^n \#_j \alpha_j (n_0 + \alpha_j k + \beta_jl) \geq l,$ which contradicts the restriction $\sum_{j=1}^n \#_j \alpha_j^2 m_j =0$ for all $\vec{m} \in \mathbb{M}$. Similarly, if $k<0$, $\exists j_* \in \{1, ..., n\}$ such that

\begin{eqnarray*}
|\alpha_{j_*} m_{j_*} -n_0 - \alpha_{j_*} k - \beta_{j_*} l| \geq \frac{|k|}{5n \cdot \sup_{1 \leq j \leq n} |\#_j \beta_j|}. 
\end{eqnarray*}
The interested reader may easily check that the collection of large perturbations yields an acceptable error to the main contribution.

\end{proof}

\subsection{Lemma}

 \begin{lemma}\label{ML*}
 Fix $\phi \in \mathcal{S}(\mathbb{R})$, $A>0$ and let $K_1, K_2$ satisfy the usual conditions. Then there exists $k_0$ such that for all $k \geq k_0(\phi, \alpha, \beta,A, K_1, K_2)$, 
 
 \begin{eqnarray*}
 Im \left[ \int_{-A}^A \int_{-A}^A \phi(\alpha t + \beta s) e^{2 \pi i A sk} K_1(t+Ak) K_2(s) ds dt \right] \gtrsim \frac{D_{ \phi, \alpha, A, C_1, C_2}}{Ak},
 \end{eqnarray*}
where $D_{ \phi, \alpha, A, C_1, C_2}:=C_1\cdot  C_2 \cdot  \int_{-A}^A \phi(\alpha t) dt$. 
 \end{lemma}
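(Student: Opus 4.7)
The plan is to isolate the ``separated'' approximation
\[
M_k := K_1(Ak)\Bigl(\int_{-A}^A \phi(\alpha t)\,dt\Bigr)\Bigl(\int_{-A}^A K_2(s)\,e^{2\pi i A s k}\,ds\Bigr)
\]
to the integral $I_k$, show $M_k$ supplies the claimed lower bound, and show the remainder $I_k - M_k$ is $o(1/(Ak))$ for $k\geq k_0$. To that end, I would decompose $K_1(t+Ak) = K_1(Ak)+r_k(t)$ and $\phi(\alpha t + \beta s) = \phi(\alpha t) + R(t,s)$; by the Mikhlin decay $|K_1'(\xi)| \lesssim 1/\xi^2$ we have $|r_k(t)| \lesssim 1/(Ak^2)$ uniformly on $t \in [-A,A]$ (once $Ak \geq A+1$), and Taylor's theorem gives $R(t,0)=0$, so that $R(t,s)/s$ extends to a bounded smooth function on $[-A,A]^2$.

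For $M_k$, $K_2$ being real and odd forces $\int_{-A}^A K_2(s)e^{2\pi i A s k}\,ds$ to be purely imaginary with imaginary part $2\int_0^A K_2(s)\sin(2\pi A k s)\,ds = Im[\check{K}_2](Ak) - 2\int_A^\infty K_2(s)\sin(2\pi A k s)\,ds$. The first piece satisfies $Im[\check{K}_2](Ak) \geq C_2 - \epsilon$ for $k$ sufficiently large by the $\liminf$ hypothesis, while the tail is $O(1/(Ak))$ after a single integration by parts that exploits $|K_2'(s)| \lesssim 1/s^2$. Combined with $K_1(Ak) \geq C_1/(Ak)$ and positivity of $\int_{-A}^A \phi(\alpha t)\,dt$, this yields $Im(M_k) \gtrsim C_1 C_2\int_{-A}^A \phi(\alpha t)\,dt\,/(Ak)$. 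The $(r_k,\phi(\alpha t))$ residual contributes $|r_k(t)| \lesssim 1/(Ak^2)$ integrated against an $s$-integral uniformly bounded in $k$ (by the $\check{K}_2$ asymptotics just invoked), giving $O(1/k^2)$. The $(K_1(Ak),R)$ residual, for each fixed $t$, integrates the smooth function $R(t,s)K_2(s) = [R(t,s)/s]\cdot[sK_2(s)]$ against $e^{2\pi i A s k}$; one integration by parts in $s$ gains a factor $1/(Ak)$, contributing $O(1/(Ak)^2)$ overall. The $(r_k,R)$ cross term is smaller still, and taking $k_0$ large enough ensures every error is dominated by half of the main lower bound.

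The main obstacle is the principal-value bookkeeping for the integrations by parts against $K_2$: one must verify that the boundary contributions at $s = \pm A$ are genuinely $O(1/(Ak))$ with constants independent of $k$, and that derivatives of the form $\partial_s\bigl[(\text{smooth})(s)\cdot K_2(s)\bigr]$ remain integrable in the principal-value sense after pairing with the oscillatory factor. The factorization $R(t,s)K_2(s) = [R(t,s)/s]\cdot[sK_2(s)]$ is the key algebraic identity that turns the singular $K_2$ near the origin into a smooth factor, so this bookkeeping is routine once the decomposition is set up, and the lemma follows by combining the main-term lower bound with the three error estimates for $k \geq k_0$.
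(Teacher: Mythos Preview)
Your proposal is correct and follows the same strategy as the paper: isolate the separated main term $K_1(Ak)\cdot\int\phi(\alpha t)\,dt\cdot Im[\check K_2(Ak)]$ and control the cross terms by Taylor expansion and integration by parts. The paper's implementation differs in one technical respect: rather than expanding $\phi$ and $K_1$ directly, it first rescales $s\mapsto s/(Ak)$ and then splits the resulting $s$-integral at $|s|=1$, performing IBP only on the outer pieces where the rescaled derivative $\frac{1}{(Ak)^2}K_2'(s/(Ak))$ is $O(1/s^2)$ and hence absolutely integrable. In your setup the factorization $RK_2=(R/s)(sK_2)$ makes $RK_2$ \emph{bounded} but not $C^1$, since $\partial_s(sK_2)=K_2+sK_2'$ is only $O(1/|s|)$; a single IBP therefore leaves a residual integrand with a $1/|s|$ singularity at the origin, and you need an additional split (say at $|s|=1/(Ak)$) to close the estimate, at the cost of a harmless $\log(Ak)$ factor. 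This is exactly the ``principal-value bookkeeping'' you flag, and once patched the two arguments are equivalent.
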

 
 \begin{proof}
 The proof is a straightforward application of elementary decay estimates and integration by parts. First, note that is suffices to assume $C_1 = K_1(Ak)$ and $C_2= Im \left[ \check{K}_2(Ak) \right]$ and then prove
 
\begin{eqnarray*}
\lim_{k \rightarrow \infty}\left| \left|  Im\left[ \int_{-A^2k}^{A^2k} \phi\left(\alpha t + \frac{\beta s}{Ak} \right) e^{2 \pi i s} \frac{1}{Ak}K_2\left( \frac{s}{Ak} \right) ds \right] - Im \left[ \check{K}_2 (A k) \right] \phi(\alpha t) \right| \right|_{L^\infty_t\left( [-A, A]\right)}=0.
\end{eqnarray*} 
Assuming the claim, choose $k_0(A)$ large enough to ensure the relevant difference is uniformly bounded by $O(1/A^2)$. Then \begin{eqnarray*}
&&\left| Im \left[ \int_{-A}^A \int_{-A^2k}^{A^2k} \phi\left(\alpha t + \frac{\beta s}{Ak} \right) e^{2 \pi i s} \frac{1}{Ak} K_2\left( \frac{s}{Ak} \right) ds~ K_1(t+Ak) dt - \frac{D_A}{Ak} \right] \right| \\&\leq& \left| Im \left[ \int_{-A}^A\left( \int_{-A^2k}^{A^2k} \phi\left(\alpha t + \frac{\beta s}{Ak} \right) e^{2 \pi i s}\frac{1}{Ak}K_2\left( \frac{s}{Ak} \right) ds- C_2  \phi(\alpha t)  \right)K_1(t+Ak) dt \right] \right|  \\ &+& \left| Im \left[ C_2 \left( \int_{-A}^A \phi(\alpha t) \left( K_1(t+Ak) - K_1(Ak) \right) dt \right) \right] \right|.
\end{eqnarray*}
We have easy estimates for both $|~I~| \lesssim \frac{1}{A^2 k}$ and 
\begin{eqnarray*}
|~II~| \leq c \int_{-A}^A |\phi(\alpha t)|  \frac{|t|}{(Ak)^2}  dt \lesssim \frac{1}{(Ak)^2}.
\end{eqnarray*}
Hence, the lemma follows once we show the claim. To this end,  break up the interior integral into three pieces: 

\begin{eqnarray*}
Im\left[ \int_{-A^2k}^{A^2k}  \phi\left( \alpha t + \frac{\beta  s}{Ak} \right) e^{2 \pi i s} \frac{1}{Ak} K_2\left( \frac{s}{Ak} \right)ds\right]  &=&  Im \left[ \left( \int_{-A^2k}^{-1} + \int_{-1}^1 + \int_1^{A^2k} \right) ... \right] \\ &:=& I_a^{k,t} + I_b^{k,t} + I_c^{k,t}. 
\end{eqnarray*}
It is easy to see that $I_b^{k,t} =Im \left[  \int_{-1}^1  \phi\left( \alpha t + \frac{\beta s}{Ak} \right) e^{2 \pi i s}\frac{1}{Ak} K_2\left( \frac{s}{Ak} \right) ds\right] \rightarrow  \phi(\alpha t) Im \left[ \int_{-1}^1 e^{2 \pi i s}\frac{1}{Ak} K_2 \left( \frac{s}{Ak} \right) ds\right]$ as $k \rightarrow \infty$ uniformly in $t \in [-A,A]$ as desired. Moreover, a simple integration by parts argument on terms $I_a^{k,t}$ and $I_c^{k,t}$ yields

\begin{eqnarray*}
I_c^{k,t} &=& Im \left[ \frac{1}{2 \pi i } \int_1^{A^2k} \phi \left( \alpha t + \frac{ \beta  s}{Ak} \right) \frac{d}{ds} \left( e^{2 \pi i s} \right) \frac{1}{Ak} K_2 \left( \frac{s}{Ak} \right)ds \right] \\ &=&Im \left[  \frac{1}{2 \pi i } \left[ \frac{ \phi( \alpha t + \beta A) K_2(A)}{Ak} - \phi \left( \alpha t + \frac{\beta }{Ak} \right)\frac{K_2\left( \frac{1}{Ak} \right)}{Ak} \right.\right. \\  &-& \left.\left. \frac{\beta}{Ak} \int_1^{A^2k} \phi^\prime \left( \alpha t + \frac{\beta s}{Ak} \right) e^{2 \pi i s} \frac{1}{Ak} K_2\left( \frac{s}{Ak} \right) ds- \int_1^{A^2k} \phi\left( \alpha t + \frac{\beta s}{Ak} \right) e^{2 \pi i s} \frac{1}{(Ak)^2} K_2^\prime \left( \frac{s}{Ak} \right) ds\right] \right].
\end{eqnarray*}
However, 

\begin{eqnarray*}
\lim_{k \rightarrow \infty} \left| \left| \int_{A^2 k} ^{\infty} \phi\left( \alpha t + \frac{\beta s}{Ak} \right) e^{2 \pi i s} \frac{1}{(Ak)^2} K_2^\prime \left( \frac{s}{Ak} \right) \right| \right|_{L^\infty_t([-A, A])} =0
\end{eqnarray*}
from the pointwise estimate $\left| \frac{1}{(Ak)^2} K_2^\prime \left( \frac{s}{Ak} \right)\right| \lesssim \frac{1}{s^2}$. Moreover, using $\left|\phi(\alpha t + \frac{\beta s}{Ak} )- \phi(\alpha t)\right| \lesssim \left(\frac{\beta s}{Ak} \right)^{\frac{1}{2}}$ yields

\begin{eqnarray*}
\lim_{k \rightarrow \infty} \left| \left|  \int_1^{\infty} \left[ \phi\left( \alpha t + \frac{\beta s}{Ak} \right) - \phi\left(\alpha t \right) \right] e^{2 \pi i s} \frac{1}{(Ak)^2} K_2^\prime \left( \frac{s}{Ak} \right) ds  \right| \right|_{L^\infty_t([-A, A])}=0.
\end{eqnarray*}
Again using the uniform smoothness of $\phi$ and oddness of $K_2$, 

\begin{eqnarray*}
\lim_{k \rightarrow \infty} \left| \left|  I^{k,t}_c + Im\left[ \frac{1}{2 \pi i } \left[ \phi(\alpha t) \frac{K_2(\frac{1}{Ak})}{Ak} +  \phi(\alpha t)\int_1^\infty e^{2 \pi i s} \frac{1}{(Ak)^2} K_2^\prime \left( \frac{s}{Ak} \right) ds \right] \right] \right| \right|_{L^\infty_t ([-A, A])}  &=&0 \\ \lim_{k \rightarrow \infty} \left| \left|  I^{k,t}_a + Im\left[\frac{1}{2 \pi i } \left[  \phi(\alpha t) \frac{K_2(\frac{1}{Ak})}{Ak} + \phi(\alpha t) \int_{-\infty}^{-1} e^{2 \pi i s} \frac{1}{(Ak)^2} K_2^\prime \left( \frac{s}{Ak} \right) ds \right]\right] \right| \right|_{L^\infty_t ([-A, A])}  &=&0
 \end{eqnarray*}
Therefore, it is enough to show

\begin{eqnarray*}
 Im \left[ \frac{1}{2 \pi i } \left[ -\frac{ 2 K_2\left( \frac{1}{Ak} \right)}{Ak}   - \int_{\mathbb{R}\cap [-1,1]^c} e^{2 \pi i s} \frac{1}{(Ak)^2} K_2^\prime \left( \frac{s}{Ak} \right) ds \right] +\int_{-1}^1 e^{2 \pi i s} \frac{1}{Ak} K_2 \left( \frac{s}{Ak} \right)ds- \check{K_2}(Ak)\right ] =0.
\end{eqnarray*}
This is immediate via integrating by parts $\int_{\mathbb{R}-[-1,1]} e^{2 \pi i s} \frac{1}{(Ak)^2} K_2^\prime \left( \frac{s}{Ak} \right) ds$ over its two disjoint regions to rewrite the LHS as $\int_\mathbb{R} e^{2 \pi i s} \frac{1}{Ak} K_2 \left( \frac{s}{Ak} \right) ds - \check{K_2}(Ak)$ and then perform the change of variable $s \mapsto Ak s$.

 \end{proof}
 
The following statement is an immediate corollary of Lemma \ref{ML*}:
\begin{lemma}\label{ML7}
Fix $\phi \in \mathcal{S}(\mathbb{R})$, $A>0$, and $K_1, K_2$ satisfying the usual conditions. Then there exists $k_0$ such that for all $k \geq k_0(\phi, \vec{\alpha}, \vec{\beta}, A, K_1, K_2)$

\begin{eqnarray*}
Im \left[ \int_{-A}^A \int_{-A}^A \prod_{j=1}^n \phi(\alpha _j t + \beta_j s) e^{2 \pi i A sk} K_1(s) K_2(t + Ak)  ds dt \right] \gtrsim \frac{D_{\phi, \vec{\alpha}, A, C_1, C_2}}{Ak}
\end{eqnarray*}
where $D_{\phi, \vec{\alpha}, A, C_1, C_2}:= C_1 \cdot C_2 \left[ \int_{-A}^A \prod_{j=1}^n \phi(\alpha_j t)  dt\right]$.
\end{lemma}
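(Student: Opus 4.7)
Lemma \ref{ML7} is a direct generalization of Lemma \ref{ML*}: the only structural change is that the single Schwartz factor $\phi(\alpha t + \beta s)$ appearing in the integrand of \ref{ML*} is replaced by the product $\Phi(t,s) := \prod_{j=1}^n \phi(\alpha_j t + \beta_j s)$, with the labels $K_1, K_2$ also interchanged (which is merely a relabeling of hypotheses). The plan is therefore to rerun the proof of Lemma \ref{ML*} verbatim with $\Phi$ in place of $\phi(\alpha t + \beta s)$, after first verifying that $\Phi$ possesses the relevant analytic properties of $\phi(\alpha t + \beta s)$ that are actually invoked.

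Specifically, the proof of Lemma \ref{ML*} uses only three properties of the factor $\phi(\alpha t + \beta s)$: (i) uniform $L^\infty$ boundedness on $[-A,A]^2$; (ii) uniform $L^\infty$ boundedness of its $s$-partial derivative (needed for the integration-by-parts step on the two tails); and (iii) the uniform H\"{o}lder-type convergence $\phi(\alpha t + \beta s/(Ak)) \to \phi(\alpha t)$ on $t \in [-A,A]$ at rate $O((|s|/(Ak))^{1/2})$. Each of these properties is stable under finite products of Schwartz functions: by the product rule one has
\[
\partial_s \Phi(t,s) \;=\; \sum_{j=1}^n \beta_j\, \phi'(\alpha_j t + \beta_j s) \prod_{i \neq j} \phi(\alpha_i t + \beta_i s),
\]
so $\|\Phi\|_\infty, \|\partial_s \Phi\|_\infty \lesssim_{\phi,\vec{\alpha},\vec{\beta}} 1$, and the uniform convergence $\Phi(t, s/(Ak)) \to \Phi(t,0) = \prod_j \phi(\alpha_j t)$ on $[-A,A]$ follows factor by factor from the corresponding uniform convergence for each $\phi(\alpha_j t + \beta_j s/(Ak))$.

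With these properties secured, I would rerun the three-region decomposition of the inner $s$-integral into $[-A^2k, -1] \cup [-1,1] \cup [1, A^2k]$ and the associated integration-by-parts and dominated-convergence arguments from Lemma \ref{ML*} line by line, obtaining the uniform limit
\[
\lim_{k\to\infty}\left\|Im\left[\int_{-A^2k}^{A^2k}\Phi\bigl(t,\tfrac{s}{Ak}\bigr)\,e^{2\pi is}\,\tfrac{1}{Ak}K_{\mathrm{osc}}(s/(Ak))\,ds\right]-Im[\check{K}_{\mathrm{osc}}(Ak)]\,\Phi(t,0)\right\|_{L^\infty_t([-A,A])}=0,
\]
where $K_{\mathrm{osc}}$ denotes whichever of $K_1, K_2$ is paired with the oscillatory $s$-variable. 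Combining this uniform limit with the pointwise lower bound $\gtrsim 1/(Ak)$ available on the other kernel at the large argument $t + Ak$ (from the ``usual conditions'' hypothesis) and integrating in $t$ yields the claimed lower bound $\frac{C_1 C_2 \int_{-A}^A \prod_j \phi(\alpha_j t)\,dt}{Ak} = \frac{D_{\phi,\vec{\alpha},A,C_1,C_2}}{Ak}$. The only obstacle is bookkeeping: no new analytic ideas are needed beyond those already present in Lemma \ref{ML*}, which is why the paper legitimately labels this statement an immediate corollary.
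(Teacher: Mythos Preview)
Your proposal is correct and matches the paper's approach exactly: the paper's proof of Lemma \ref{ML7} consists of the two words ``Same as before,'' indicating precisely the observation you make---that the product $\prod_j \phi(\alpha_j t + \beta_j s)$ enjoys the same boundedness, differentiability, and uniform-convergence properties used in the proof of Lemma \ref{ML*}, so that argument carries over verbatim (modulo the harmless $K_1 \leftrightarrow K_2$ relabeling you already flagged).
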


\begin{proof}
Same as before. 
\end{proof}

\section{Symbols Adapted to Subspaces $\Gamma \subset \mathbb{R}^n$ with $dim~ \Gamma  \geq  \frac{ n}{2}+\frac{3}{2}$}

\begin{definition}
Fix $\Phi, \Psi \in \mathcal{S}(\mathbb{R})$ such that $1_{[-1/2, 1/2]} \leq \Phi \leq 1_{[-1,1]}$ and $1_{[2, \infty)} \leq \Psi \leq 1_{[1, \infty)}$. Let $\Gamma\left(\{\vec{\alpha}^m\}_{m=1}^d\right)= \bigcap_{m=1}^d \left\{ \vec{\xi} \cdot \vec{\alpha}^m =0 \right\}$ and $\Gamma\left( \{\vec{\beta}^m\}_{m=1}^d\right)= \bigcap_{m=1}^d \left\{ \vec{\xi} \cdot \vec{\beta}^m =0 \right\}$. For every symbol $m : \mathbb{R}^n \rightarrow \mathbb{C}$, the $(\Phi, \Psi)-$localization of $m$ near $\Gamma\left( \{\vec{\alpha}^m\}_{m=1}^d\right)$ away from $\Gamma\left(\{\beta^m\}_{m=1}^d\right)$ is the symbol defined by 

\begin{eqnarray*}
m[\{ \vec{\alpha}^m\}, \{\vec{\beta^m}\}, \Phi, \Psi](\vec{\xi}) =\left[ \prod_{m}  \Phi(\vec{\alpha}^m \cdot \vec{\xi} )\right] \left[ \prod_m^\prime  \Psi(\vec{\beta}^m\cdot \vec{\xi}) \right]m (\vec{\xi})~~\forall\vec{\xi} \in \mathbb{R}^n,
\end{eqnarray*}
where the primed product means one multiplies only over those $m \in \{1, ..., d\}$ for which $\vec{\beta}^m \not = \vec{0}$. 

\end{definition}
Our main result in this section is 
\begin{theorem}\label{TechThm}
Let $n \geq 5, d \geq 1$. Let $\left\{ \alpha_j ^m\right\}_{1 \leq j \leq n; 1 \leq m \leq d} ,\left\{ \beta_j ^m\right\}_{1 \leq j \leq n; 1 \leq m \leq d} \in \mathbb{R}^{nd} $ be given. Suppose there exists a vector $\vec{a} \in \mathbb{R}^n$ such that $\alpha_j^m = a_jq_j^m$ and $\beta_j^1= a_j r_j$ where $q_j^m, r_j \in \mathbb{Q}$ for all $1 \leq j \leq n$ and $1 \leq m \leq d$.  Furthermore, assume there are $\vec{\#} \in \mathbb{R}^n$ and $\mathfrak{C} >0$ such that

\begin{eqnarray*}
\sum_{j=1}^n \#_j \alpha_j^n =\sum_{j=1}^n \#_j \beta_j^n  &=& 0~~~~~~~~~~~~~\forall~ n \in \{1, ..., d\} \\ 
\sum_{j=1}^n \#_j \alpha_j^n \alpha_j^m =\sum_{j=1}^n \#_j \beta_j^n \beta_j^m &=& 0~~~~~~~~~~~~~\forall~n,m \in \{1, ..., d\}\\ 
\sum_{j=1}^n \#_j \alpha_j^n \beta_j^m &=&\mathfrak{C}(\vec{\#}) \cdot \delta_{n,1} \delta _{m,1} ~~~~\forall~n,m \in \{1, ..., d\},
\end{eqnarray*}
where $\delta: \{ 1, ..., d\} \times \{1, ..., d\} \rightarrow \{0,1\}$ is the Kronecker delta function, $\mathfrak{C}>0$, and $\#_j a_j^2 \in \mathbb{Q}$ for all $1 \leq j \leq n$. 
Moreover, let $K_d(s)=\frac{s_1}{|\vec{s}|^{d+1}}$ be the first $d-$dimensional Riesz kernel. Then every $(\Phi, \Psi)$-localization of $\hat{K}_d(A\vec{\xi}) \hat{K}_d(B \vec{\xi}): \mathbb{R}^n \rightarrow \mathbb{C}$ gives rise to a multilinear multiplier 

\begin{eqnarray*}
T_{\hat{K}_d(A \vec{\xi}) \hat{K}_d(B \vec{\xi})[\{\vec{\alpha}^m\}, \{\vec{\beta}^m\}, \Phi, \Psi]}
\end{eqnarray*}
which satisfies no $L^p$ estimates.

\end{theorem}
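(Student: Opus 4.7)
The plan is to adapt the argument of Theorem \ref{LocThm} to this higher-codimension setting, with the two scalar singular integrals replaced by $d$-dimensional Riesz-kernel singular integrals. First reduce to the rational case by dilating so that $\vec{a}$, the $q_j^m$, $r_j$, and $\vec{\#}$ are integers, and defer the irrational case to a Bohr-set argument. Test the multiplier against the same Gaussian chirps
\[
f^{N,A,\#_j}(x) = \sum_{-N \leq m \leq N} \phi(x-Am)\, e^{2\pi i A \#_j m x},
\]
with $\phi \geq 0$ symmetric and Fourier-supported in a tiny neighborhood of $0$, and pass to the position-space representation
\[
T_{m_\Gamma}(\vec{f})(x) = \int\!\!\int \prod_{j=1}^n f_j\!\Bigl(x - \sum_{m=1}^d \alpha_j^m t_m - \sum_{m=1}^d \beta_j^m s_m\Bigr) K_d(\vec{t})\, K_d(\vec{s})\, d\vec{t}\, d\vec{s}.
\]
Choose $A$ large so that the $\Phi$, $\Psi$ cutoffs together with the narrow Fourier support of $\phi$ restrict the contributing $\vec{m} \in \mathbb{Z}^n$ to precisely the natural higher-codimensional analogue of the set $\mathbb{M}$ from Section 4.

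\textbf{Main diagonal.} Discretize the $\vec{t}$- and $\vec{s}$-integrals over cubes of sidelength $A$ indexed by $\vec{k},\vec{l} \in \mathbb{Z}^d$, and introduce the core vector $m_j(n_0,\vec{k},\vec{l}) = n_0 - \sum_m \alpha_j^m k_m - \sum_m \beta_j^m l_m$. The orthogonality conditions $\sum_j \#_j \alpha_j^{m'} = \sum_j \#_j \alpha_j^{m'}\alpha_j^{m} = 0$ and $\sum_j \#_j \alpha_j^n \beta_j^m = \mathfrak{C}\,\delta_{n,1}\delta_{m,1}$ give $\sum_j \#_j m_j \alpha_j^{m'} = -\mathfrak{C}\, l_1\, \delta_{m',1}$, so that the $\Phi$-cutoff forces $l_1 = 0$ on the main diagonal by integrality; the analogous computation yields $\sum_j \#_j m_j \beta_j^{m'} = -\mathfrak{C}\, k_1\, \delta_{m',1}$, matched against the $\Psi$-cutoff by choosing the sign of $\mathfrak{C}$. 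All remaining phases in $x$ and in the $\vec{t}$-variables are killed by the other orthogonality relations, leaving as main contribution (up to a unimodular factor in $x$)
\[
\prod_{j=1}^n \phi\!\Bigl(\theta_x - \sum_m \alpha_j^m \tilde t_m - \sum_m \beta_j^m \tilde s_m\Bigr) \cdot \int\!\!\int e^{2\pi i A \mathfrak{C} k_1 s_1}\, K_d(\vec{s})\, K_d(\vec{t}+A\vec{k})\, d\vec{s}\, d\vec{t}.
\]

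\textbf{Key oscillatory lemma.} The principal obstacle is establishing the $d$-dimensional analogue of Lemma \ref{ML7} for Riesz kernels: that for $\vec{k}$ in a suitable sector (essentially $k_1 \simeq \cdots \simeq k_d$), the imaginary part of the above double integral is bounded below by $\gtrsim 1/(A|\vec{k}|^d)$. Following Lemma \ref{ML*}, rescale $\vec{s} \mapsto \vec{s}/(Ak_1)$ to isolate $\operatorname{Im}[\check K_d(A k_1 \vec{e}_1)]$, which is $\sim 1$ because the Fourier symbol $\xi_1/|\vec{\xi}|$ of $K_d$ is identically $1$ on the positive $\vec{e}_1$-axis, and combine with the pointwise bound $K_d(A\vec{k}) \sim 1/(A|\vec{k}|)^d$ in the comparable-coordinate region; integration by parts in $s_1$ controls the tails, and the $\phi$-factor contributes the positive constant $\int \prod_j \phi(\sum_m \alpha_j^m \tilde t_m)\, d\vec{t}$. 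Summing the resulting $1/(A|\vec{k}|^d)$ over $\vec{k}$ in this sector with $|\vec{k}| \lesssim N^{1/d}$ yields the desired $\log(N)/A$ blow-up on the set $\bigcup_{n_0 \in [N/2,2N/3]} [An_0, An_0 + c_{\vec{\alpha},\vec{\beta}}/A]$.

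\textbf{Perturbations and the irrational case.} The remaining terms are handled by mirroring the perturbation analysis in the proof of Theorem \ref{LocThm}. Small perturbations $\vec{m} = \vec{m}(n_0,\vec{k},\vec{l}) + \vec{\Delta}$ with $|\vec{\Delta}| = O_{\vec{\alpha},\vec{\beta}}(1)$ split into a resonant subclass satisfying $\sum_j \#_j \Delta_j \alpha_j^1 \beta_j^1 = 0$ that reinforces the main term once $c_{\vec{\alpha},\vec{\beta}}$ is taken small enough to control the $\theta_x$-phases, and a non-resonant subclass that is killed by one integration by parts in $s_1$ gaining an extra factor $1/A$. The constraints $\sum_j \#_j \alpha_j^{m'} m_j = 0$ and $\sum_j \#_j \beta_j^{m'} m_j < 0$ (for those $m'$ with $\vec{\beta}^{m'} \neq 0$) restrict $\vec{l}$ and $\vec{\Delta}$ to bounded sets, and large perturbations decay by Schwartz regularity of $\phi$ together with summability of the $|\vec{k}|^{-d}, |\vec{l}|^{-d}$ weights. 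For the irrational case use $\#_j a_j^2 \in \mathbb{Q}$ together with the rationality of the $q_j^m$ and $r_j$ to arrange that every phase $e^{2\pi i A \sum_j \#_j m_j(\cdots)}$ evaluated at the core vector lies in $\mathbb{Z}$ up to an $O(1/A^2)$ error, exactly as in Part II of Theorem \ref{MT**}. Lemma \ref{ML***} then supplies a Bohr set $Bohr_{c_{\vec{a}} N}(\{1/a_j\},1/A^2)$ of size $\gtrsim_{A,\vec{a}} N$ on which the pointwise lower bound $|T_{m_\Gamma}(\vec{f})(x)| \gtrsim_A \log(N)$ holds, and the standard scaling argument then precludes every $L^p$ estimate.
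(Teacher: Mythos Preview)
Your proposal is essentially correct and follows the same architecture as the paper's proof: the same Gaussian-chirp test functions, the same discretization of the $(\vec{t},\vec{s})$-integrals into $A$-cubes indexed by $(\vec{k},\vec{l})$, the same use of the orthogonality relations to reduce the phase on the core vector to $e^{2\pi i A\mathfrak{C}k_1 s_1}$, the same rescaling $\vec{s}\mapsto\vec{s}/(Ak_1)$ and uniform limit isolating $\hat{K}_d(\vec{e}_1)$, and the same Bohr-set reduction for the irrational case.

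Two minor discrepancies are worth noting. First, your organization of the $\vec{k}$-sum differs: you restrict to a diagonal sector $k_1\simeq\cdots\simeq k_d$ and sum $1/(A|\vec{k}|)^d$ there, whereas the paper fixes $k_1$, sums over $|k_2|,\dots,|k_d|\lesssim k_1$ to obtain $\gtrsim 1/(A^d k_1)$, and then sums in $k_1$. Both produce the $\log N$ blow-up, though your range $|\vec{k}|\lesssim N^{1/d}$ should read $|\vec{k}|\lesssim N$ (the constraint $m_j\in[-N,N]$ only forces $|\vec{\alpha}_j\cdot\vec{k}|\lesssim N$); the logarithm survives either way. Second, your small-perturbation split by the condition $\sum_j\#_j\Delta_j\alpha_j^1\beta_j^1=0$ does not match what actually occurs: for $\vec{m}\in\mathbb{M}$ the constraint $\sum_j\#_j\Delta_j\alpha_j^p=0$ is forced for all $p$, so there is no non-resonant subclass at $\vec{l}=\vec{0}$ to kill by integration by parts. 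The paper instead separates by $\vec{l}=\vec{0}$ (which reinforces the main term after controlling the $\theta_x$-phase) versus $\vec{l}\neq\vec{0}$ (where two integrations by parts in $s_1$ give $O(1/k_1^2)$). This is a detail you would discover upon writing out the phases, not a structural gap.
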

Remark: The notation $q_j^m$ does not mean $(q_j)^m$. We should consider $q_j^m$ as an doubly-indexed quantity.  In the following lemma, $q_j^m = (q_j)^m$!



\begin{lemma}\label{IRL3}
Let $n \geq 5$, $d \geq 1$, and $2d+3 \leq n$. Suppose $\vec{a} \in \mathbb{R}^n$ satisfies $a_j ^{-1}= q_j + \alpha q_j^{2}$ and $\beta_j = a_j q_j^{-1}$ for some $\alpha \in \mathbb{R}$ and $\vec{q} \in \mathbb{Q}^n$ with distinct, non-zero entries such that $q_j \alpha \not =-1$ for all $1 \leq j \leq n$.  Let $\alpha_j^n=a_j q_j^{n}$ for $2 \leq n \leq d$ and $\alpha_j^1 = a_j$. Then there are $\vec{\#} \in \mathbb{R}^n$ and $\mathfrak{C}>0$ such that 

\begin{eqnarray*}
\sum_{j=1}^n \#_j \alpha_j^n=  \sum_{j=1}^n \#_j \alpha_j^n \alpha_j^m =
\sum_{j=1}^n \#_j \beta_j^1 = 
\sum_{j=1}^n \#_j \left[ \beta_j^1 \right]^2 &=& 0 \\ 
\sum_{j=1}^n  \#_j \alpha_j^m \beta_j^1 &=& \mathfrak{C} \delta_{1,m}
\end{eqnarray*}
with the additional property that $\#_j a_j^2 \in \mathbb{Q}$ for all $1 \leq j \leq d$. 
\end{lemma}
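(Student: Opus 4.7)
My strategy mirrors the proof of Lemma \ref{IRL2}: perform the substitution $\tilde{\#}_j := \#_j a_j^2$ so that the rationality requirement $\#_j a_j^2 \in \mathbb{Q}$ becomes $\tilde{\#}_j \in \mathbb{Q}$, and then rewrite every constraint of the lemma as a $\mathbb{Q}$-linear equation in $\vec{\tilde{\#}}$ whose coefficients are integer powers of the $q_j$.  Using the hypotheses $a_j^{-1}=q_j+\alpha q_j^2$ and $\beta_j^1 = a_j q_j^{-1}$, one checks directly that
\begin{eqnarray*}
\#_j\alpha_j^m &=& \tilde{\#}_j\bigl(q_j^{m+1}+\alpha\, q_j^{m+2}\bigr), \qquad \#_j\beta_j^1 \;=\; \tilde{\#}_j(1+\alpha q_j),\\
\#_j\alpha_j^m\alpha_j^{m'} &=& \tilde{\#}_j\, q_j^{\mu(m,m')}, \qquad \#_j(\beta_j^1)^2 \;=\; \tilde{\#}_j\, q_j^{-2}, \qquad \#_j\alpha_j^m\beta_j^1 \;=\; \tilde{\#}_j\, q_j^{m-1},
\end{eqnarray*}
where $\mu(m,m')=m+m'$ for $m,m'\ge 2$, $\mu(1,m')=m'$ for $m'\ge 2$, and $\mu(1,1)=0$. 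Whenever $\alpha$ appears, it sits in a rational binomial $X+\alpha Y$; since $X,Y\in\mathbb{Q}$ once $\vec{\tilde{\#}}\in\mathbb{Q}^n$, I will enforce $X=0=Y$ separately, a sufficient condition that is independent of whether $\alpha$ is rational.

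Collecting all the resulting identities, the lemma reduces to the purely rational linear-algebra problem of finding $\vec{\tilde{\#}}\in\mathbb{Q}^n$ such that
\begin{eqnarray*}
\sum_{j=1}^n \tilde{\#}_j\, q_j^k = 0 \quad \text{for all } k \in \{-2,0,1,2,\ldots,2d\}, \qquad \sum_{j=1}^n \tilde{\#}_j\, q_j^{-1}\ne 0;
\end{eqnarray*}
the constant $\mathfrak{C}$ is then defined as the value of the last sum, with the sign of $\vec{\tilde{\#}}$ flipped if necessary to guarantee $\mathfrak{C}>0$. The exponent $k=-1$ is, crucially, the unique integer in $[-2,2d]$ left unconstrained by the homogeneous part, so it is the natural slot to host the nonzero ``main-contribution'' functional $\mathfrak{C}$.

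To solve this system I will verify that the $2d+3$ functionals $\vec{\tilde{\#}}\mapsto \sum_j\tilde{\#}_j q_j^k$ indexed by $k\in\{-2,-1,0,1,\ldots,2d\}$ are linearly independent on $\mathbb{Q}^n$; this is a Laurent--Vandermonde argument. Indeed, if $\sum_{k=-2}^{2d}c_k q_j^k = 0$ for every $j$, then multiplying by $q_j^2$ shows the polynomial $Q(x)=\sum_{k=-2}^{2d} c_k x^{k+2}$, of degree $\le 2d+2$, vanishes at the $n\ge 2d+3$ distinct nonzero points $q_1,\ldots,q_n$, which forces $Q\equiv 0$ and hence every $c_k=0$. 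The induced linear map $\mathbb{Q}^n\to\mathbb{Q}^{2d+3}$ is therefore surjective, so I may prescribe the target to be $\mathfrak{C}$ at the $k=-1$ slot and $0$ at every other slot; setting $\#_j:=\tilde{\#}_j/a_j^2$ recovers the required $\vec{\#}\in\mathbb{R}^n$. The only genuine difficulty is the bookkeeping needed to confirm that the exponent set forced to zero is exactly $\{-2,0,1,\ldots,2d\}$ with $k=-1$ as the lone survivor; once the constraint-counting is correct, everything else is routine Gram--Schmidt in $\mathbb{Q}^n$, exactly as in the closely analogous Lemma \ref{IRL2}.
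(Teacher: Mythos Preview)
Your proposal is correct and follows essentially the same route as the paper: the substitution $\tilde{\#}_j=\#_j a_j^2$, the reduction of every constraint to a condition $\sum_j\tilde{\#}_jq_j^{k}=0$ for $k\in\{-2,0,1,\ldots,2d\}$ while $\sum_j\tilde{\#}_jq_j^{-1}\ne 0$, and the dimension count $n\ge 2d+3$ are exactly what the paper does. One minor slip: your displayed formulas $\#_j\alpha_j^m=\tilde{\#}_j(q_j^{m+1}+\alpha q_j^{m+2})$ and $\#_j\alpha_j^m\beta_j^1=\tilde{\#}_jq_j^{m-1}$ are only valid for $m\ge 2$ (for $m=1$ one has $\alpha_j^1=a_j$, giving $\tilde{\#}_j(q_j+\alpha q_j^2)$ and $\tilde{\#}_jq_j^{-1}$ respectively), though your stated final exponent set and your $\mu$-function show you handled the $m=1$ case correctly in practice.
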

\begin{proof}
It suffices to prove there is a rational solution $\vec{\tilde{\#}} \in \mathbb{Q}^n$ to the system 
 
 \begin{eqnarray*}
\sum_{j=1}^n \#_j \alpha^1_j=\sum_{j=1}^n\tilde{\#}_j  a_j^{-2} a_j= \sum_{j=1}^n \tilde{\#}_j ( q_j + \alpha q_j^2)&=& 0 \\
 \sum_{j=1}^n \#_j \alpha_j^m = \sum_{j=1}^n\#_j a_j q_j^m =\sum_{j=1}^n  \tilde{\#}_j a_j^{-1} q^{m}_j=\sum_{j=1}^n \tilde{\#}_j (q_j + \alpha q_j^2) q_j^m&=&0 ~~~~ ( 2\leq m \leq d)\\ 
 \sum_{j=1}^n \#_j \beta^1_j  = \sum_{j=1}^n \tilde{\#}_j a_j^{-2} a_j q_j^{-1} = \sum_{j=1}^n \tilde{\#}_j ( q_j + \alpha q_j^2) q_j^{-1} &=& 0 \\ 
\sum_{j=1}^n \#_j \left[ \alpha_j^1\right]^2= \sum_{j=1}^n\#_j a_j^2 = \sum_{j=1}^n \tilde{\#}_j &=& 0 \\ 
\sum_{j=1}^n \#_j \alpha_j^m \alpha_j^n= \sum_{j=1}^n \#_j a_j^2 q_j^m q_j^n  = \sum_{j=1}^n \tilde{\#}_j q_j^{m+n}&=&0~~~~(2 \leq n,m \leq d)\\
 \sum_{j=1}^n \#_j \alpha_j^1 \alpha_j^m = \sum_{j=1}^n \#_j a_j^2 q_j^m = \sum_{j=1}^n \tilde{\#}_j q_j^m &=& 0 ~~~~~~(2 \leq m \leq d)\\ 
\sum_{j=1}^n \#_j \left[ \beta_j^1\right]^2= \sum_{j=1}^n \tilde{\#}_j q_j^{-2} = 0\\
\\
 \sum_{j=1}^n \#_j \beta_j^1 \alpha_j^m = \sum_{j=1}^n \#_j a_j^2 q_j^m q_j^{-1} = \sum_{j=1}^n \tilde{\#}_j q_j^{m-1} &=& 0 ~~~~~~(2 \leq m \leq d) \\ \sum_{j=1}^n \#_j \alpha_j^1 \beta_j^1=  \sum_{j=1}^n \tilde{\#}_j q_j^{-1} \not = 0.
 \end{eqnarray*}
Because $q_i \not = q_j$ whenever $i \not = j$ and $2d+3 \leq n$, one can choose non-trivial $\tilde{\vec{\#}} \in \mathbb{Q}^n$ to ensure $\sum_{j=1}^n \tilde{\#}_j q_j^m =0$ for all $m \in \{-2, 0, 1, 2, ..., 2d\}$ and $\sum_{j=1}^n \tilde{\#}_j q_j^{-1} \not = 0$. 
 \end{proof}

Using Theorem \ref{TechThm} and Lemma \ref{IRL3}, we obtain
\begin{theorem}\label{ExThm}
Let  $n, \mathfrak{d} \in \mathbb{N}$ satisfy $\frac{n+3}{2} \leq \mathfrak{d} <n$ and $n \geq 5$.  Furthermore, let $a_j^{-1} = q_j + \alpha q_j^2$ for some $\alpha \in \mathbb{R}$ and $\vec{q} \in \mathbb{Q}^n$ with distinct, non-zero entries such that $q_j \alpha \not = -1$ for all $1 \leq j \leq n$. Let $\alpha_j^n = a_j q_j^n$ for $2 \leq n \leq d$ and $\alpha_j^1 = a_j$. Moreover, let 

\begin{eqnarray*}
\Gamma = \bigcap _{m=1}^d \left\{ \vec{\xi} \cdot \vec{\alpha}^m =0 \right\} \subset \mathbb{R}^n.
\end{eqnarray*}
Then there exists a symbol $m_\Gamma$ adapted to $\Gamma$ in the Mikhlin-H\"{o}rmander sense and supported in $\left\{ \vec{\xi} : dist(\vec{\xi}, \Gamma) \lesssim 1 \right\}$ for which the associated multilinear multiplier $T_{m_\Gamma}$ is unbounded. 
\end{theorem}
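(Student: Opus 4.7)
The plan is a direct combination of Theorem \ref{TechThm} and Lemma \ref{IRL3}, in exact analogy with how Theorem \ref{MT*} was deduced from Theorem \ref{LocThm} and Lemma \ref{IRL2} in the hyperplane setting. The subspace $\Gamma$ has codimension $d = n - \mathfrak{d}$, so the hypothesis $\mathfrak{d} \geq \frac{n+3}{2}$ reads $2d + 3 \leq n$, which is exactly the arithmetic condition under which Lemma \ref{IRL3} applies. I would first record that the $d$ equations $\vec{\xi}\cdot \vec{\alpha}^m = 0$ are linearly independent because the coefficient matrix $(a_j q_j^m)_{j,m}$ has Vandermonde-type structure in the $q_j$ (which are distinct and non-zero), so $\Gamma$ genuinely has dimension $\mathfrak{d}$.

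Next I would make the choice $\vec{\beta}^1 := (a_j q_j^{-1})_{j=1}^n$ and $\vec{\beta}^m := \vec{0}$ for $2 \leq m \leq d$. With this choice, all the orthogonality conditions that appear in the hypothesis of Theorem \ref{TechThm} involving $\vec{\beta}^m$ with $m \geq 2$ are vacuously satisfied; the only non-trivial relations that remain are precisely
\begin{eqnarray*}
\sum_{j=1}^n \#_j \alpha_j^n = 0 \quad (\forall n), \qquad \sum_{j=1}^n \#_j \alpha_j^n \alpha_j^m = 0 \quad (\forall n, m), \qquad \sum_{j=1}^n \#_j \beta_j^1 = \sum_{j=1}^n \#_j (\beta_j^1)^2 = 0,
\end{eqnarray*}
together with $\sum_j \#_j \alpha_j^n \beta_j^1 = \mathfrak{C}\,\delta_{n,1}$ and the rationality requirement $\#_j a_j^2 \in \mathbb{Q}$. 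These are exactly the relations output by Lemma \ref{IRL3}, so a suitable $\vec{\#}\in \mathbb{R}^n$ and $\mathfrak{C}>0$ exist.

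With the algebraic setup in place, I would invoke Theorem \ref{TechThm} to produce a $(\Phi,\Psi)$-localization $m_\Gamma$ of $\hat{K}_d(A\vec{\xi}) \hat{K}_d(B\vec{\xi})$ whose associated multilinear multiplier $T_{m_\Gamma}$ satisfies no $L^p$ estimates. By construction the $\prod_m \Phi(\vec{\alpha}^m \cdot \vec{\xi})$ factor forces $\mathrm{supp}\,m_\Gamma \subset \{\vec{\xi} : dist(\vec{\xi}, \Gamma)\lesssim 1\}$, and standard differentiation together with the $\Psi$-factor separating $\vec{\xi}$ away from $\Gamma(\{\vec{\beta}^m\})$ shows $m_\Gamma \in \mathcal{M}_\Gamma(\mathbb{R}^n)$.

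The genuine mathematical content lives inside Theorem \ref{TechThm} (the Gaussian-chirp / Riesz-kernel unboundedness argument at higher codimension) and Lemma \ref{IRL3} (the Vandermonde-style linear algebra, where the hypothesis $2d+3 \leq n$ is essential to leave at least one degree of freedom for the final non-vanishing condition $\sum_j \tilde{\#}_j q_j^{-1} \neq 0$). Thus the only real obstacle in the present step is the bookkeeping verification that the chosen $\vec{\alpha}^m, \vec{\beta}^m$ and the resulting $\vec{\#}$ meet the hypotheses of Theorem \ref{TechThm} precisely, and that the $(\Phi,\Psi)$-localization retains the Mikhlin-H\"ormander character relative to $\Gamma$. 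Once these routine checks are carried out, the theorem follows. Uncountability of the collection $\mathfrak{C}$ (as in Theorem \ref{MT}) then comes for free by letting $\alpha$ range over $\mathbb{R}$ and observing that distinct irrational $\alpha$ produce distinct subspaces $\Gamma$.
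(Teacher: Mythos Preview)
Your proposal is correct and follows exactly the approach the paper takes: the paper simply records ``Using Theorem \ref{TechThm} and Lemma \ref{IRL3}, we obtain'' Theorem \ref{ExThm}, and you have spelled out the bookkeeping (the identification $d=n-\mathfrak{d}$ so that $2d+3\le n$, the choice $\vec{\beta}^1=(a_jq_j^{-1})$ with $\vec{\beta}^m=\vec{0}$ for $m\ge2$, and the verification that the $(\Phi,\Psi)$-localization lies in $\mathcal{M}_\Gamma$ with the required support) that the paper leaves implicit. Your final remark on uncountability is extraneous here---that belongs to Theorem \ref{MT} via Propositions \ref{prop1} and \ref{prop2}, not to Theorem \ref{ExThm}---but it does not affect the argument.
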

\begin{prop}\label{prop1}
Let $n \geq d+1$. Fix $\vec{q} \in \mathbb{Q}^n$ and $\alpha \in \mathbb{R}$. Let $a_j^{-1} = q_j + \alpha q_j^2$ with distinct, non-zero entries such that $q_j \alpha \not = -1$ for all $1 \leq j \leq n$. Let $\alpha_j^n = a_j q_j^n$ for $2 \leq n \leq d$ and $\alpha_j^1 = a_j$. Let

\begin{eqnarray*}
\Gamma(\alpha, \vec{q}) := \bigcap _{m=1}^d \left\{\left. \vec{\xi} \in \mathbb{R}^n ~~\right|~ \vec{\xi} \cdot \vec{a}^m(\alpha, \vec{q} )=0 \right\}.
\end{eqnarray*}
Then the number of distinct subspaces in the collection $\left\{ \Gamma(\alpha, \vec{q}) \right\}_{|\alpha| \leq \epsilon}$ is uncountable for every $\epsilon >0$. 
\end{prop}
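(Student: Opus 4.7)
The plan is to encode each $\Gamma(\alpha, \vec{q})$ projectively via the Pl\"{u}cker coordinates of the dual $d$-dimensional space $W(\alpha) := \operatorname{span}\{\vec{\alpha}^m(\alpha, \vec{q})\}_{m=1}^d$, and then to show those coordinates depend on $\alpha$ through an injective M\"{o}bius transformation. Let $M(\alpha)$ be the $d \times n$ matrix whose rows are the $\vec{\alpha}^m(\alpha,\vec{q})$. Setting $p_1 = 0$, $p_m = m$ for $m \geq 2$, and $a_j(\alpha) := [q_j(1+\alpha q_j)]^{-1}$, one has $M(\alpha)_{m,j} = a_j(\alpha)\, q_j^{p_m}$, so $M(\alpha) = N \cdot D(\alpha)$ with $N_{m,j} = q_j^{p_m}$ independent of $\alpha$ and $D(\alpha) = \operatorname{diag}(a_j(\alpha))$. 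Consequently every Pl\"{u}cker coordinate factors cleanly:
\begin{eqnarray*}
P_J(\alpha) := \det M(\alpha)_{:, J} = c_J \prod_{j \in J} a_j(\alpha), \qquad c_J := \det N_{:, J},
\end{eqnarray*}
with $c_J$ constant in $\alpha$.

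Next I would verify two structural facts about $N$. First, $N$ has full rank $d$: any linear dependence of its rows produces a polynomial $b_1 + b_2 q^2 + \cdots + b_d q^d$ of degree at most $d$ vanishing at the $n \geq d+1$ distinct points $q_j$, hence identically zero. Second, $N$ has no zero column, because its first row is $(1,1,\ldots,1)$. These two facts together guarantee the existence of \emph{adjacent} non-vanishing Pl\"{u}cker coordinates: fix any basis $J^* \subset \{1,\dots,n\}$ with $c_{J^*} \neq 0$; if every swap $(J^* \setminus \{i_0\}) \cup \{j\}$, for $j \notin J^*$, had vanishing determinant, then the expansion of $N_{:,j}$ in the basis $\{N_{:,i}\}_{i \in J^*}$ would have all its coefficients zero, forcing $N_{:,j} = \vec{0}$, contradicting the no-zero-column property.

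With adjacent bases $J_1 = J' \cup \{i_0\}$ and $J_2 = J' \cup \{i_1\}$ in hand, the Pl\"{u}cker ratio
\begin{eqnarray*}
\frac{P_{J_1}(\alpha)}{P_{J_2}(\alpha)} = \frac{c_{J_1}}{c_{J_2}} \cdot \frac{a_{i_0}(\alpha)}{a_{i_1}(\alpha)} = \frac{c_{J_1}\, q_{i_1}}{c_{J_2}\, q_{i_0}} \cdot \frac{1 + \alpha q_{i_1}}{1 + \alpha q_{i_0}}
\end{eqnarray*}
is a M\"{o}bius transformation in $\alpha$ of determinant $q_{i_1} - q_{i_0} \neq 0$, hence injective on its domain. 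Since $W(\alpha) = W(\alpha')$ is equivalent to proportionality of the projective Pl\"{u}cker vectors, which in particular forces this ratio to agree at $\alpha$ and $\alpha'$, cross-multiplying yields $(\alpha - \alpha')(q_{i_1} - q_{i_0}) = 0$, i.e.\ $\alpha = \alpha'$. Thus $\alpha \mapsto \Gamma(\alpha, \vec{q})$ is injective on $(-\epsilon, \epsilon) \setminus \{-q_j^{-1} : 1 \leq j \leq n\}$, which is a cofinite, hence uncountable, subset of $(-\epsilon, \epsilon)$; this yields the claimed uncountable collection.

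The main obstacle I expect is the combinatorial step of producing two adjacent non-vanishing $c_J$'s: since the exponent set $\{0, 2, 3, \ldots, d\}$ skips $1$, the familiar positivity of ordinary Vandermonde minors is unavailable and some individual $c_J$ may genuinely vanish for certain choices of $\vec{q}$. Treating the question matroid-theoretically via the basis-exchange observation above, rather than by explicit generalized-Vandermonde computation, is what sidesteps this difficulty in a uniform fashion across all admissible $\vec{q}$.
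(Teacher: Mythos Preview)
Your proof is correct and takes a genuinely different route from the paper's. The paper argues geometrically and only proves uncountability, not injectivity: it computes $\Gamma(0,\vec q)_\perp=\operatorname{Span}\{\vec q^{\,-1},\vec q,\vec q^{\,2},\ldots,\vec q^{\,d-1}\}$, observes $\left.\tfrac{d}{d\alpha}\vec a(\alpha)\right|_{\alpha=0}\propto\vec 1\notin\Gamma(0,\vec q)_\perp$ (using $n\geq d+1$), and hence $\vec a(\epsilon)\notin\Gamma(0,\vec q)_\perp$ for small $\epsilon$; then the continuous function $D_\epsilon(\alpha)=\operatorname{dist}(\vec a(\epsilon),\Gamma(\alpha,\vec q)_\perp)$ satisfies $D_\epsilon(0)\neq 0=D_\epsilon(\epsilon)$, so by the intermediate value theorem it assumes a continuum of values. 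Your approach is algebraic: the factorization $M(\alpha)=N\cdot D(\alpha)$ makes each Pl\"ucker coordinate $P_J(\alpha)=c_J\prod_{j\in J}a_j(\alpha)$, and the ratio $P_{J_1}/P_{J_2}$ for adjacent $J_1,J_2$ is a nondegenerate M\"obius function of $\alpha$, forcing $\alpha\mapsto\Gamma(\alpha,\vec q)$ to be injective on its entire domain. Your conclusion is strictly stronger (global injectivity rather than uncountability on small intervals) and bypasses any continuity or compactness considerations; the paper's argument is shorter and avoids Pl\"ucker machinery but is intrinsically local. One expositional point: in your adjacent-basis step the quantifier on $i_0$ should be made explicit---you need that \emph{every} swap $(J^*\setminus\{i_0\})\cup\{j\}$ over \emph{all} $i_0\in J^*$ vanishes (for some fixed $j\notin J^*$) before Cramer's rule kills every coefficient of $N_{:,j}$; as written, the phrase ``every swap $(J^*\setminus\{i_0\})\cup\{j\}$, for $j\notin J^*$'' could be misread as ranging only over $j$.
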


\begin{proof}
Because $\Gamma(0,\vec{q}) = \left[Span\left\{\vec{q}^{-1}, \vec{q}, \vec{q} \wedge \vec{q}, ..., \wedge^{d-1} \vec{q} \right\}\right]_{\perp}$, 

\begin{eqnarray*}
\Gamma(0,\vec{q}) _{\perp}= \left[ \left[Span\left\{\vec{q}^{-1}, \vec{q}, \vec{q} \wedge \vec{q}, ..., \wedge^{d-1} \vec{q} \right\} \right]_{\perp} \right]_{\perp} = Span\left\{\vec{q}^{-1}, \vec{q}, \vec{q} \wedge \vec{q}, ..., \wedge^{d-1} \vec{q} \right\} .
\end{eqnarray*}
As $n \geq d+1$ and $\vec{q} \in \mathbb{Q}^n$ has distinct non-zero entries, $\vec{1} \not \in Span\left\{\vec{q}^{-1}, \vec{q}, \vec{q} \wedge \vec{q}, ..., \wedge^{d-1} \vec{q} \right\}$ so that $d := dist(\vec{1}, \Gamma(0,\vec{q})_{\perp}) >0$.  Therefore, $\frac{d}{d \alpha} \vec{a}(\alpha)=-\frac{ q_j^2}{(q_j + \alpha q_j^2)^2}$. Therefore, $\left. \frac{d}{d \alpha}\vec{a}(\alpha) \right|_{\alpha = 0} \propto \vec{1}$. It follows that for small enough $\epsilon$ depending on $\vec{q}$, $\vec{a}(\epsilon) \not \in \Gamma(0,\vec{q})_{\perp}$. Therefore, letting

\begin{eqnarray*}
D_\epsilon(\alpha) :=dist(\vec{a}(\epsilon), \Gamma(\alpha, \vec{q})_{\perp}),
\end{eqnarray*}
 $D_\epsilon(0) \not = 0, D_\epsilon(\epsilon) =0$. Therefore, $Im \left[ D_\epsilon([0, \epsilon)]) \right]\supset [0, \delta)$ for some $\delta>0$. In particular, $D_\epsilon$ takes on uncountably many values, and so $\{\Gamma(\alpha, \vec{q})_{\perp} \}_{|\alpha| \leq  \epsilon}$ and $\left\{\Gamma(\alpha, \vec{q})\right\}_{|\alpha| \leq \epsilon}$ must be uncountable.

\end{proof}
\begin{prop} \label{prop2}
Let $n \geq d+1$. Fix $\vec{q} \in \mathbb{Q}^n$ and $\alpha \in \mathbb{R}$. Let $a_j^{-1} = q_j + \alpha q_j^2$ with distinct, non-zero entries such that $q_j \alpha \not = -1$ for all $1 \leq j \leq n$. Let $\alpha_j^n = a_j q_j^n$ for $2 \leq n \leq d$ and $\alpha_j^1 = a_j$. Let

\begin{eqnarray*}
\Gamma(\alpha, \vec{q}) := \bigcap _{m=1}^d \left\{\left. \vec{\xi} \in \mathbb{R}^n ~~\right|~ \vec{\xi} \cdot \vec{a}^m(\alpha, \vec{q} )=0 \right\}.
\end{eqnarray*}
Then, for each $\vec{q} \in \mathbb{Q}^n$ with distinct non-zero entries,  there exists $\epsilon>0$ such that $\Gamma(\alpha, \vec{q})$ is non-degenerate for all $|\alpha| \leq \epsilon$ in the sense of Muscalu, Tao, and Thiele \cite{MR1887641}, i.e. 
\begin{eqnarray*}
\tilde{\Gamma} := \left\{ (\xi_1, ,..., \xi_{n+1})\in \mathbb{R}^{n+1} : (\xi_1, ..., \xi_n) \in \Gamma, \sum_{j=1}^{n+1} \xi_{j}=0 \right\} 
\end{eqnarray*}
 is a graph over the variables $\xi_{i_1}, ..., \xi_{i_{n-d}}$ for every $1 \leq i_1 < i_2 < ... < i_{n-d} \leq n+1$. 

\end{prop}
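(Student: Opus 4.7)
The plan is to reformulate non-degeneracy of $\tilde\Gamma(\alpha,\vec q)$ as the non-vanishing of all maximal minors of the defining matrix $M(\alpha)\in\mathbb{R}^{(d+1)\times(n+1)}$, compute these minors explicitly at $\alpha=0$ by reducing to Vandermonde and Schur determinants, and finish via a continuity argument in $\alpha$. Writing the augmenting equation $\sum_{j=1}^{n+1}\xi_j=0$ as the top row of $M(\alpha)$ and the defining equations $\vec\xi\cdot\vec\alpha^m=0$ as rows $m=1,\ldots,d$, one has $\tilde\Gamma(\alpha,\vec q)=\ker M(\alpha)$, with column $n+1$ equal to $(1,0,\ldots,0)^T$ and the entries in columns $1,\ldots,n$ given by rational functions of $\alpha$ with denominators $(1+\alpha q_j)$; by the hypothesis $q_j\alpha\neq-1$ these denominators are regular near $\alpha=0$. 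Non-degeneracy is the requirement that every $(d+1)\times(d+1)$ minor of $M(\alpha)$ is non-zero, and since each minor is a continuous function of $\alpha$ on a neighborhood of $0$ and there are only finitely many of them, it suffices to prove non-vanishing at $\alpha=0$; the desired $\epsilon>0$ then follows immediately.

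At $\alpha=0$ we have $a_j=1/q_j$, so the row corresponding to $\vec\alpha^m$ has entries $1/q_j$ when $m=1$ and $q_j^{m-1}$ when $m\geq 2$. Fix a $(d+1)$-column subset $T$. If $T\subset\{1,\ldots,n\}$, multiplying column $k$ by $q_{i_k}$ and reordering rows so the exponents of $q_{i_k}$ ascend through $0,1,2,\ldots,d$ transforms the minor into a classical Vandermonde, so it equals $\pm\bigl(\prod_k q_{i_k}^{-1}\bigr)\prod_{a<b}(q_{i_b}-q_{i_a})$, which is non-zero by the distinctness of the $q_j$. If $n+1\in T$, expansion along column $n+1$ (whose only non-zero entry is in row $0$) reduces the minor to a $d\times d$ determinant whose $k$th column at $\alpha=0$ is $(1/q_{i_k},q_{i_k},q_{i_k}^2,\ldots,q_{i_k}^{d-1})^T$; rescaling column $k$ by $q_{i_k}$ yields a generalized Vandermonde with exponent set $\{0,2,3,\ldots,d\}$, and by the bialternant formula this determinant factors as $V(q_{i_1},\ldots,q_{i_d})\cdot s_{(1^{d-1})}(q_{i_1},\ldots,q_{i_d})$ with $s_{(1^{d-1})}=e_{d-1}$.

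The main obstacle is the elementary symmetric factor $e_{d-1}(q_{i_1},\ldots,q_{i_d})=\bigl(\prod_k q_{i_k}\bigr)\sum_k q_{i_k}^{-1}$: if this vanishes for some $d$-subset, the associated minor with $n+1\in T$ is identically zero in $\alpha$ and no continuity argument can rescue it. One is therefore forced to impose the additional genericity condition $\sum_k q_{i_k}^{-1}\neq 0$ on every $d$-subset of $\{1,\ldots,n\}$; the set of $\vec q\in\mathbb{Q}^n$ satisfying this (in addition to distinctness and non-vanishing of the entries) is the complement of a finite union of rational hyperplanes and is itself uncountable, containing for example every $\vec q$ all of whose entries share a sign, such as $q_j=j$. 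Under this generic condition all $(d+1)\times(d+1)$ minors of $M(0)$ are non-zero, and continuity of the minors as rational functions of $\alpha$ at $\alpha=0$ supplies an $\epsilon=\epsilon(\vec q)>0$ on which every minor remains non-zero, so $\tilde\Gamma(\alpha,\vec q)$ is non-degenerate for all $|\alpha|\leq\epsilon$. Combined with Proposition~\ref{prop1} this yields the uncountable family of non-degenerate subspaces demanded by Theorem~\ref{MT}.
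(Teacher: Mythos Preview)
Your approach is the same as the paper's: both reduce non-degeneracy to the non-vanishing of every $(d+1)\times(d+1)$ minor of the defining matrix, evaluate at $\alpha=0$, and invoke continuity. The paper simply asserts that the matrix $\mathcal{M}(\vec q)$ ``has non-zero determinant for all its minors,'' whereas you actually compute them.

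In doing so you have caught a genuine issue that the paper glosses over. Your computation is correct: for a minor containing column $n+1$, Laplace expansion leaves a $d\times d$ determinant which, after factoring $a_{i_k}$ from each column, is \emph{independent of $\alpha$} and equals a Vandermonde times $e_{d-1}(q_{i_1},\dots,q_{i_d})$. Hence if $\sum_k q_{i_k}^{-1}=0$ for some $d$-subset, that minor vanishes for \emph{every} $\alpha$, and $\Gamma(\alpha,\vec q)$ is degenerate for all $\alpha$. A concrete failure is $d=2$, $n=3$, $\vec q=(1,-1,2)$: the minor on columns $\{1,2,n+1\}$ is $a_1a_2(q_2^2-q_1^2)\equiv 0$. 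So Proposition~\ref{prop2} as literally stated is false, and the paper's bare assertion does not hold.

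Your fix---restrict to $\vec q$ satisfying the additional generic condition $\sum_{k} q_{i_k}^{-1}\neq 0$ on every $d$-subset---is exactly right, and harmless for the downstream application: the complement of finitely many rational hyperplanes still meets the distinct-nonzero locus in an uncountable set (any $\vec q$ with all entries of one sign works, e.g.\ $q_j=j$), so Proposition~\ref{prop1} combined with your corrected version still delivers Theorem~\ref{MT}.
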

\begin{proof}
First observe that for $\vec{v} \in \mathbb{Z}^n$, set  $\Gamma ^{\vec{v}} := \left\{ \vec{\xi} \in \mathbb{R}^n: \sum_{j=1}^n \xi_j v_j =0 \right\}$. Let $\vec{\alpha}^j = \wedge^{j} \vec{\gamma}$ for $j \in \{1, ..., d\}$ where $\vec{\gamma} \in \mathbb{Z}^n$ satisfies $\gamma_i \not = \gamma_j$ for all $i \not = j$. Then the subspace $\Gamma := \bigcap_{j=1}^d  \Gamma^{\vec{\alpha}^j} \subset \mathbb{R}^n $ is a non-degenerate subspace. Indeed, let $\mathcal{M} \in M_{d+1, n+1}(\mathbb{Z})$ be given by
\[\mathcal{M}=
  \begin{bmatrix}
    1 & 1& .... & 1 &1\\ 
    \gamma^1_1 & \gamma_2^1 & ... & \gamma_n^{1}&0 \\
     \gamma^2_1  & \gamma_2^2 & ... & \gamma_n^{2} &0 \\ 
    \vdots & \vdots  & \hdots & \vdots &\vdots\\ 
    \gamma^d_1& \gamma_2^d& ... & \gamma_n^{d}&0
  \end{bmatrix}.
\]
Then $\vec{\xi} \in \tilde{\Gamma} \subset \mathbb{R}^{n+1}$ iff $\mathcal{M} \vec{\xi} = \vec{0}$. Suppose $n-d$ distinct indices $\mathcal{I} \subset (i_1, i_2, ..., i_{n-d}) \subset (1, 2, ..., n+1)$ have been chosen. Form $\mathcal{M}_{\mathcal{I}^c}\in M_{d+1, d+1}$ by deleting those columns with indices in $\mathcal{I}$. Furthermore, let $\vec{\xi}_{\mathcal{I}^c} \in \mathbb{R}^{n-d}$ be given by deleting all indices in $\mathcal{I}$. Then for every $\vec{\xi} \in \mathbb{R}^{n+1}$, $\mathcal{M} \vec{\xi} = \mathcal{M}_{\mathcal{I}} \vec{\xi}_{\mathcal{I}} + \mathcal{M}_{\mathcal{I}^c} \vec{\xi}_{\mathcal{I}^c}$ and for every $\xi \in \tilde{\Gamma}$, 

\begin{eqnarray*}
\mathcal{M}_{\mathcal{I}} \vec{\xi}_{\mathcal{I}} = -  \mathcal{M}_{\mathcal{I}^c} \vec{\xi}_{\mathcal{I}^c}.
\end{eqnarray*}
However, $\mathcal{M}_{\mathcal{I}^c}$ must take one of the following forms: 

\[  \begin{bmatrix}
1&1 & ... &1 \\
    \gamma^1_{j_1} & \gamma^1_{j_2}& ... & \gamma^1_{j_{d+1}} \\
     \gamma^2_{j_1}  & \gamma^2_{j_2} & ... & \gamma_{j_{d+1}}^{2}  \\ 
    \vdots & \vdots  & \hdots & \vdots \\ 
    \gamma^d_{j_1}& \gamma_{j_2}^d& ... & \gamma_{j_{d+1}}^{d}
\end{bmatrix} ~~~or~~~~
  \begin{bmatrix}
    1 & 1& .... & 1 &1\\ 
    \gamma^1_{j_1} & \gamma^1_{j_2}& ... & \gamma^1_{j_{d}}&0 \\
     \gamma^2_{j_1}  & \gamma^2_{j_2} & ... & \gamma_{j_{d}}^{2} &0 \\ 
    \vdots & \vdots  & \hdots & \vdots &\vdots\\ 
    \gamma^d_{j_1}& \gamma_{j_2}^d& ... & \gamma_{j_{d}}^{d}&0
\end{bmatrix}
\]
for some $\vec{j} =(j_1, ..., j_d): 1 \leq j_1 \leq ... \leq j_d$. In either case, $\mathcal{M}_{\mathcal{I}^c}$ is invertible by the assumption $\gamma_i \not = \gamma_j$ whenever $i \not = j$. Hence, there is a well-defined mapping $\gamma : (\xi_{i_1},..., \xi_{i_{n-d}}) \rightarrow (\xi_1,..., \xi_{n+1})$ expressing $\tilde{\Gamma}$ as a graph over the variables $(\xi_{i_1}, ..., \xi_{i_{n-d}})$. In the limit as $\alpha \rightarrow 0$, $\vec{\alpha} \rightarrow q_j^{-1}$. Therefore, $\vec{\alpha}^1 \rightarrow \vec{q}^{-1}$ and $\vec{\alpha}^n \rightarrow \vec{q}^{n-1}$ for $2 \leq n \leq d$. The matrix 
 
 \[\mathcal{M}(\vec{q}) := 
  \begin{bmatrix}
    1 & 1& .... & 1 &1\\ 
q^{-1}_1 & q^{-1}_2& ... & q^{-1}_{d}&0 \\
    q_1  & q_2& ... & q_d &0 \\ 
    \vdots & \vdots  & \hdots & \vdots &\vdots\\ 
    q_1^{d-1}& q_2^{d-1}& ... &q_d^{d-1}&0
\end{bmatrix},
\]
has non-zero determinant for all its minors, which ensures $\Gamma(0, \vec{q})$ is non-degenerate. Moreover, as non-degeneracy holds at $\alpha=0$, it continuous to hold for all $|\alpha| \leq \epsilon(\vec{q})$.

\end{proof}

Combining Theorem \ref{ExThm} with Propositions \ref{prop1} and \ref{prop2} finally yields the takeaway result:
\begin{reptheorem}{MT}
Let  $n, \mathfrak{d} \in \mathbb{N}$ satisfy $\frac{n+3}{2} \leq \mathfrak{d} <n$ and $n \geq 5$.  Then there is an uncountable collection $\mathfrak{C}$ of $\mathfrak{d}-$ dimensional non-degenerate subspaces $\Gamma \subset \mathbb{R}^n$ such that for each $\Gamma \in \mathfrak{C}$ there is an associated symbol $m_{\Gamma}$ adapted to $\Gamma$ in the Mikhlin-H\"{o}rmander sense for which the associated multilinear multiplier $T_{m_{\Gamma}}$ is unbounded. 
\end{reptheorem}

\begin{proof} \textit{[Theorem \ref{TechThm}]}
\subsection{PART 1: The Rational Case}
Note that our assumption in the case $\alpha=0$ is equivalent to the superficially weaker assumption that $A= \left( \vec{\alpha}_1, ..., \vec{\alpha}_n \right)  \in (\mathbb{Q}^d)^n$ and $B=\left( \vec{\beta}_1, ..., \vec{\beta}_n\right) \in (\mathbb{Q}^d)^n$ satisfy

\begin{eqnarray*}
 (\vec{\alpha}^1 \wedge \vec{\beta}^1) \not \in Span \left\{ \vec{\alpha}^1, \vec{\beta}^1, \left. (\vec{\alpha}^n\wedge\vec{\beta}^m) \right|_{(n,m) :\delta_{n,1} \delta_{m,1} =0}, \left. (\vec{\alpha}^n \wedge \vec{\alpha}^m)\right|_{(n,m)}, \left. (\vec{\beta}^n\wedge \vec{\beta}^m)\right|_{(n,m)}\right\}. 
\end{eqnarray*}
Alternatively, $A=\left(\vec{\alpha}^1, ..., \vec{\alpha}^d \right), B=\left( \vec{\beta}^1, ..., \vec{\beta}^d \right) \in( \mathbb{Q}^n)^d$ under the obvious identification $\left( \mathbb{Q}^d \right)^n \simeq\left( \mathbb{Q}^n\right)^d \simeq \mathbb{Q}^{dn}$. By dilating if necessary, we shall assume $A, B \in \mathbb{Z}^{nd}$. 
Indeed, by the dimensionality constraints and spanning condition, we are assured by the Gram-Schmidt process of finding of vector $\vec{\#} \in \mathbb{R}^n$ for which the orthogonality constraints (*) are satisfied: 

\begin{eqnarray*}
\sum_{j=1}^n \#_j \alpha_j^n =\sum_{j=1}^n \#_j \beta_j^n  &=& 0~~~~~~~~~~~~~\forall~ n \in \{1, ..., d\} \\ 
\sum_{j=1}^n \#_j \alpha_j^n \alpha_j^m =\sum_{j=1}^n \#_j \beta_j^n \beta_j^m &=& 0~~~~~~~~~~~~~\forall~n,m \in \{1, ..., d\}\\ 
\sum_{j=1}^n \#_j \alpha_j^n \beta_j^m &=&\mathfrak{C}(\vec{\#}) \cdot \delta_{n,1} \delta _{m,1} ~~~~\forall~n,m \in \{1, ..., d\},
\end{eqnarray*}
where $\delta: \{ 1, ..., d\} \times \{1, ..., d\} \rightarrow \{0,1\}$ is the usual Kronecker delta function and $\mathfrak{C} \in \mathbb{R}^c\cap \{0\}^c$.  In fact, one can always restrict $\vec{\#} \in \mathbb{Z}^n$. Indeed, because $A , B \in \mathbb{Z}^{dn}$, we may perform the Gramm-Schmidt process to form an orthogonal basis $\left\{ \vec{\gamma}^1, ..., \vec{\gamma}^p\right\}$ for $\mathcal{S}:=Span \left\{\vec{1}, \vec{\alpha}^1, \vec{\beta}^1, \left. (\vec{\alpha}^n\wedge\vec{\beta}^m) \right|_{(n,m) :\delta_{n,1} \delta_{m,1} =0}, \left. (\vec{\alpha}^n \wedge \vec{\alpha}^m)\right|_{(n,m)}, \left. (\vec{\beta}^n\wedge \vec{\beta}^m)\right|_{(n,m)}\right\}$ such that $\vec{\gamma}^j \in \mathbb{Q}^n$ for every $j \in \{1, ..., p\}$.  Moreover, $\vec{\alpha}^1 \wedge \vec{\beta}^1$ is not in the span, so we can find an element in $\mathbb{Q}^n$ orthogonal to $\mathcal{S}$ by setting 

\begin{eqnarray*}
\vec{\#} = \vec{\alpha}^1 \wedge \vec{\beta}^1 - \sum_{j=1}^p \frac{ \vec{\gamma}^j \langle \vec{\gamma}^j , \vec{\alpha}^1 \wedge \vec{\beta}^1 \rangle}{\langle \vec{\gamma}^j, \vec{\gamma}^j \rangle}
\end{eqnarray*}
for which the constraints $(*)$ are satisfied. By a signed dilation, we may take $\vec{\#} \in \mathbb{Z}^n$ so that $\mathcal{C} \in \mathbb{Z}^+$ without loss of generality.

Recall $f^{N, A, \#}(x):= \sum_{-N \leq m \leq N} \phi(x-Am) e^{2 \pi i A m \#  x}= \sum_{-N \leq m  \leq N} f_m^{N, A \#} (x)$, where $\phi$ satisfies the same properties as the $\phi$ appearing in Theorem \ref{MT*}.  Fix $\vec{m}$ and assume $m_j = n_0 - \vec{\alpha_j }\cdot \vec{k} - \vec{\beta_j} \cdot \vec{l}$ for some $\vec{k}$ and $\vec{l}$ for all $j \in \{1, ..., n\}$. Then for appropriate choices for $\phi$ and $A$, the frequency support assumptions ensure

\begin{eqnarray*}
\sum_{j=1}^n \#_j \alpha^k_j m_j =0~~\forall 1 \leq k \leq d; ~~\sum_{j=1}^n\#_j  \beta^1_j m_j >0
\end{eqnarray*}
provided 

\begin{eqnarray*}
T_{\hat{K}_d(A\cdot) \hat{K}_d(B\cdot)[\{ \vec{\alpha}^m\}, \{\vec{\beta}^m\}, \Phi, \Psi]}\left(\left\{  f_{m_j}^{N, A \#_j} \right\}_{j=1}^n\right) \not \equiv 0.
\end{eqnarray*}
Indeed, by construction, 

\begin{eqnarray*}
 \hat{K}_d(A\cdot  ) \hat{K}_d(B\cdot  )[\{ \vec{\alpha}^m\}, \{\vec{\beta}^m\}, \Phi, \Psi](\vec{\xi}) = \hat{K}_d(A  \vec{\xi}) \hat{K}_d(B \vec{\xi}) m_1\left(\vec{\xi}\right) m_2\left(\vec{\xi}\right),
 \end{eqnarray*}
 where the symbols $m_1$ and $m_2$ are Mikhlin-H\"{o}rmander adapted to $\bigcap_{j=1}^d \{\vec{\xi} : dist(\vec{\xi}, \Gamma^{\vec{\alpha}^{j}}) \lesssim 1\}$ and $\{\vec{\xi}: \xi \cdot \vec{\beta}^1\gtrsim 1\}$ and are also identically equal to $1$ on $\bigcap_{j=1}^d \{\vec{\xi} : dist(\vec{\xi}, \Gamma^{\vec{\alpha}^{j}}) \lesssim 1\}$ and $\left\{\vec{\xi}: \vec{\xi} \cdot \vec{\beta}^1 \gtrsim 1\right\}$ respectively.  Therefore, setting
\begin{eqnarray*}
\mathbb{M}  := \left\{ \vec{m} \in \mathbb{Z}^n\cap [-N,N]^n \left|  \sum_{j=1}^n \#_j m_j \beta^1_j >0~;~\sum_{j=1}^n \#_j m_j \alpha^k_j =0~\forall~1 \leq k \leq d\right\} \right. , 
\end{eqnarray*}

\begin{eqnarray*}
 T_{\hat{K}_d(A\cdot) \hat{K}_d(B\cdot)[\{ \vec{\alpha}^m\}, \{\vec{\beta}^m\}, \Phi, \Psi]}\left(\left\{  f^{N, A, \#_j} \right\}_{j=1}^n\right)=  \sum_{\vec{m} \in \mathbb{M}} T_{\hat{K}_d(A\cdot) \hat{K}_d(B\cdot)[\{ \vec{\alpha}^m\}, \{\vec{\beta}^m\}, \Phi, \Psi]}\left(\left\{  f_{m_j}^{N, A, \#_j} \right\}_{j=1}^n\right). 
\end{eqnarray*}
Furthermore, note 

\begin{eqnarray*}
&&T_{\hat{K}_d(A\cdot) \hat{K}_d(B\cdot)[\{ \vec{\alpha}^m\}, \{\vec{\beta}^m\}, \Phi, \Psi]}\left( \left\{ f^{N, A, \#_j} \right\}_{j=1}^n \right)(x)\\ &=& \sum_{\vec{m}\in  \mathbb{M}} \int_{\mathbb{R}^{2d}} \left[ \prod_{j=1}^n \phi(x-Am_j -\vec{\alpha}_j \cdot \vec{t} - \vec{\beta}_j \cdot \vec{s})e^{2 \pi i A \#_j m_j (x-\vec{\alpha}_j \cdot \vec{t} - \vec{\beta}_j \cdot \vec{s})} \right]\frac{s_1}{|\vec{s}|^{d+1}} \frac{t_1}{|\vec{t}|^{d+1}} d \vec{t} d \vec{s} \\ &=& \sum_{\vec{m} \in  \mathbb{M}} \sum_{\vec{l}, \vec{k} \in \mathbb{Z}^d}\int_{A(l_1-\frac{1}{2})}^{A(l_1+\frac{1}{2})}  ... \int_{A(k_d -\frac{1}{2})}^{A(k_d+\frac{1}{2})} \left[ \prod_{j=1}^n \phi(x-Am_j -\vec{\alpha}_j \cdot \vec{t} - \vec{\beta}_j \cdot \vec{s})e^{2 \pi i A\#_j m_j (x-\vec{\alpha}_j \cdot \vec{t} - \vec{\beta}_j \cdot \vec{s})} \right]\frac{s_1}{|\vec{s}|^{d+1}} \frac{t_1}{|\vec{t}|^{d+1}} d \vec{t} d \vec{s} .
\end{eqnarray*}
Large terms arise whenever $x \in \left[An_0 , An_0 +\frac{ c_{\vec{\alpha }, \vec{\beta}}}{A}\right]$ and there exists $\vec{m} \in [-N, N]^n \cap \mathbb{Z}^n$ together with $\vec{k}\in \mathbb{Z}^d$ satisfying the constraints $k_1 \gtrsim1$, $|\vec{k}| \lesssim N$  and

\begin{eqnarray*}
n_0 - m_j(n_0, \vec{k}, \vec{0}) - \vec{\alpha}_j \cdot \vec{k} =0~\forall~j \in \{1, ..., n\}. 
\end{eqnarray*}
In fact, we shall prove that it is enough to produce a lower bound for these large terms because the remainder may be subsumed as error:

\begin{lemma}\label{RdL3}
To prove theorem \ref{MT}, it suffices to show $\exists  c_{\left\{\vec{\alpha}^j, \vec{\beta}^j \right\}_{j \in \{1, ..., d\}}}$ with the property that  for every $x \in \left[ An_0, An_0 + \frac{c_{\left\{\vec{\alpha}^j , \vec{\beta}^j \right\}_{j \in \{1, ..., d\}}}}{A} \right]$, $k \in \left[ k_0, \frac{N}{3d \max_{(j_1, j_2) \in [1, ..., n]\times [1,...., d]} \left\{ |\alpha_{j_1}^{j_2}| \right\}} \right]$, and $n_0 \in [-N/3, N/3]$

\begin{eqnarray*}\label{Rd3}
Im \left[ e^{-2 \pi i A \left(\sum_{j=1}^n \#_j\right) n_0 x} \sum_{\vec{z} \in \mathbb{Z}^d : \vec{m}(n_0, \vec{z}, \vec{0}) \in \mathbb{M}, z_1 = k_1} T^{\vec{k}, \vec{0}}_{\hat{K}_d(A\cdot) \hat{K}_d(B\cdot)[\{ \vec{\alpha}^m\}, \{\vec{\beta}^m\}, \Phi, \Psi]} \left(\left\{ f_{m_j(n_0, \vec{k}, \vec{0})}^{N,A, \#_j} \right\}_{j=1}^n \right)(x) \right]\gtrsim \frac{1}{A^d k_1}.
\end{eqnarray*}

\end{lemma}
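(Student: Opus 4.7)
The plan is to follow the same template used for Lemma \ref{ML} and its analog in the proof of Theorem \ref{MT*}, adapted to the $d$-dimensional kernel and multi-index box decomposition. Granting the displayed pointwise lower bound, summing over $k_1$ in the range $\left[k_0,\; N/(3d\max |\alpha_{j_1}^{j_2}|)\right]$ produces a harmonic-type sum on the order of $\log(N)/A^d$ on the set
\[
\Omega_N := \bigcup_{n_0 \in [N/2,\,2N/3]\cap \mathbb{Z}} \left[An_0,\; An_0 + \tfrac{c}{A}\right],
\]
whose measure is $\simeq_{A,\vec{\alpha},\vec{\beta}} N$. Combined with the obvious bound $\prod_{j=1}^n \|f^{N,A,\#_j}\|_{p_j} \simeq N^{\sum 1/p_j}$, this gives $\|T(\vec{f}^{\,N,A,\vec{\#}})\|_{1/\sum 1/p_j} \gtrsim \log(N) \cdot N^{\sum 1/p_j}$, and letting $N \to \infty$ forces unboundedness on every Lebesgue tuple, proving Theorem \ref{TechThm}.

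So the real content is: once the main contribution is in hand, show that the full sum
\[
\sum_{\vec{k},\vec{l} \in \mathbb{Z}^d}\ \sum_{\vec{m}\in\mathbb{M}} T^{\vec{k},\vec{l}}_{\hat{K}_d(A\cdot)\hat{K}_d(B\cdot)[\cdots]}(\{f_{m_j}^{N,A,\#_j}\})(x)
\]
differs from the isolated main contribution only by acceptable error. I would partition $\mathbb{Z}^d \times \mathbb{Z}^d \times \mathbb{M}$ as in the $\mathfrak{d}=1$ case, namely into the \emph{core terms} $\{(\vec{k},\vec{0},\vec{m}(n_0,\vec{k},\vec{0}))\}$, the \emph{small perturbations} $\mathbb{M}^S_{n_0,\vec{k},\vec{l}}$ (those $\vec{m}=\vec{m}(n_0,\vec{k},\vec{l})+\vec{\Delta}$ with $|\vec{\Delta}|_\infty \leq 2\max\{|\alpha_j^m|,|\beta_j^m|\}$), and the \emph{large perturbations} $\mathbb{M}^L_{n_0,\vec{k},\vec{l}}$. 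For small perturbations, the orthogonality relations imposed on $\vec{\#}$ (namely that $\sum \#_j \alpha_j^n \alpha_j^m$ and $\sum \#_j \beta_j^n \beta_j^m$ all vanish, while $\sum \#_j \alpha_j^n \beta_j^m = \mathfrak{C}\delta_{n,1}\delta_{m,1}$) collapse the exponential phase to $\exp\bigl(2\pi iA\bigl[\sum \#_j \alpha_j^n \Delta_j\bigr]t_n + \mathfrak{C}k_1 s_1 + \cdots\bigr)$ modulo terms of size $O(\theta_x) = O(1/A)$; the frequency-restriction $\sum \#_j m_j \alpha_j^n = 0$ forces $\vec{l}$ to lie in an $O(1)$ cube in $\mathbb{Z}^d$, and similarly forces the perturbations to cancel against the core phase up to a factor $e^{2\pi i (\sum \#_j \Delta_j) \theta_x}$ whose real part can be made close to $1$ by choosing $c$ small. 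Invoking Lemma \ref{ML7} term-by-term then shows each small perturbation either \emph{reinforces} the main contribution (when $\sum \#_j \Delta_j \alpha_j^m \beta_j^n$ satisfies the appropriate cancellation) or admits an integration-by-parts gain of $1/A$ (when it does not), and the error is dominated by $1/(A^{d+1} k_1)$ per term.

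For large perturbations, a pigeonhole/contradiction argument in the spirit of the $\mathfrak{d}=1$ case applies: if every coordinate satisfied $|m_j - (n_0 - \vec{\alpha}_j\cdot \vec{k} - \vec{\beta}_j\cdot \vec{l})| < \tfrac{|\vec{l}|_\infty}{5n\max|\#_j \vec{\alpha}_j|}$, then one of the orthogonality constraints defining $\mathbb{M}$ would be violated, so some index $j_*$ must produce a Schwartz decay factor $(1+|\vec{\Delta}_{j_*}|)^{-M}$. The same argument gives decay in $|\vec{k}|$ when $\vec{k}$ lies outside the main range $k_1\in[k_0,N/C]$ or when $k_1 \leq 0$ (in the latter case, combined with $\sum \#_j m_j \beta_j^1 > 0$ and $\mathfrak{C} > 0$, the set $\mathbb{M}^{S/L}_{n_0,\vec{k},\vec{l}}$ is actually empty for $k_1 \ll 0$, exactly as in \S4). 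Summing the decay over $\vec{\Delta}, \vec{k}, \vec{l}$ yields $O(A^{-M})$ per slice, harmless against the $\log(N)/A^d$ main term provided $A$ is fixed sufficiently large.

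I expect the main obstacle to be bookkeeping rather than new ideas: the $2d$ integration variables $\vec{t}, \vec{s}$ make the phase in the small-perturbation case more delicate, because one must verify that the quadratic form $\sum \#_j \vec{\alpha}_j\otimes\vec{\beta}_j$ reduces exactly to $\mathfrak{C}\,e_1\otimes e_1$ so that only the $(k_1, s_1)$ coupling survives and produces the Riesz kernel's $1/k_1$ behavior via Lemma \ref{ML7}. Once this reduction is in place, all the other steps are parallel to those in the proof of Theorem \ref{MT*}, and the irrational-case extension proceeds through the Bohr-set construction exactly as in \S4, replacing $\delta(n_0,j)$ by a $d$-tuple of closest-integer approximations and absorbing the resulting $O(1/A^2)$ errors through the same integration-by-parts trick.
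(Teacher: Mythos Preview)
Your proposal follows the paper's approach closely and is essentially correct in structure: the partition into core/small/large perturbations, the pigeonhole-contradiction argument for large perturbations, and the $k_1 \ll 0$ emptiness all match the paper. There is, however, one genuine imprecision worth flagging.

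You claim that the frequency constraints $\sum_j \#_j m_j \alpha_j^n = 0$ force $\vec{l}$ into an $O(1)$ cube in $\mathbb{Z}^d$. This is not quite right for $d>1$: writing $m_j = n_0 - \vec{\alpha}_j\cdot\vec{k} - \vec{\beta}_j\cdot\vec{l} + \Delta_j$ and using $\sum_j \#_j \alpha_j^n \beta_j^m = \mathfrak{C}\,\delta_{n,1}\delta_{m,1}$, the $n$th constraint becomes $-\mathfrak{C}\,l_1\,\delta_{n,1} + \sum_j \#_j \Delta_j \alpha_j^n = 0$, which pins down only $l_1 = O(1)$; the coordinates $l_2,\dots,l_d$ are unconstrained by membership in $\mathbb{M}$. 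The paper handles the case $\vec{l}\neq\vec{0}$ not by your reinforce-versus-$1/A$-gain dichotomy but by a \emph{double} integration by parts in $s_1$, which produces $1/k_1^2$ decay, and then crucially uses the Riesz kernel factor $|l_1|/(1+|\vec{l}|)^{d+1}$ (together with $|l_1|\lesssim 1$) to make the sum over $\vec{l}\in\mathbb{Z}^d$ converge. Your outline does not supply any mechanism for summability in $l_2,\dots,l_d$, and the $1/A$ gain alone would not provide it. Once you insert this Riesz-kernel decay argument, the rest of your sketch goes through exactly as in the paper.
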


\subsection{Main Contribution: $\vec{k}:k_1 \gtrsim 1, \vec{l}=\vec{0}$}
Before proving the lemma, we verify the lower bound in the above display.  Note that if $n_0 \in [-N/3, N/3]$ and $\vec{k} \in \mathbb{Z}^d \cap \left[ k_0, \frac{N}{3d \max_{(j_1, j_2) \in [1, ..., d]\times [1,...., n]} \left\{ |\alpha_{j_1}^{j_2}| \right\}} \right]^d$, then $m_j(\vec{k}, \vec{0})  :=n_0 - \vec{\alpha}_j \cdot k ~\forall~j \in \{1, ..., n\} \in \mathbb{Z}^n \cap [-N, N]^n$. For $x \in \left[ An_0, An_0 + \frac{c_{\left\{\vec{\alpha}^j, \vec{\beta}^j \right\}_{j \in \{1, ..., d\}}}}{A}  \right]$, let $\theta_x= \lfloor x \rfloor.$ Evaluating the diagonal contribution yields 
\begin{eqnarray*}
& & e^{-2 \pi i A\left( \sum_{j=1}^n \#_j \right) n_0 x} T_{\hat{K}_d(A\cdot) \hat{K}_d(B\cdot)[\{ \vec{\alpha}^m\}, \{\vec{\beta}^m\}, \Phi, \Psi]} ^{\vec{k}, \vec{0}} \left(\left\{  f^{N, A, \#_j}_{m_j(n_0, k)}\right\}_{j=1}^n   \right)(x)\\ &= & \int_{[-\frac{A}{2}, \frac{A}{2}]^d}  \int_{A(k_1 - \frac{1}{2})}^{A(k_1+\frac{1}{2})} ... \int_{A(k_d-\frac{1}{2})} ^{A(k_d+\frac{1}{2})} e^{2 \pi i  A\left( \sum_{j=1}^d \#_j \alpha^1_j \beta^1_j \right) s_1 k_1} \frac{s_1 t_1 }{|\vec{s}|^{d+1} |\vec{t}|^{d+1}} d\vec{t} d\vec{s}\\
&=&\int_{-\frac{A^2\mathfrak{C} k_1}{2} }^{\frac{ A^2\mathfrak{C}k_1}{2}} ... \int_{-\frac{ A^2\mathfrak{C}k_1}{2}}^{\frac{ A^2\mathfrak{C}k_1}{2} } \int_{-\frac{A}{2}}^{\frac{A}{2}} ... \int_{-\frac{A}{2}} ^{\frac{A}{2}} \prod_{j=1}^n \phi\left( \theta_x+ \vec{\alpha}_j \cdot \vec{t} + \vec{\beta}_j \cdot \frac{ \vec{s}}{A \mathfrak{C} k_1}\right )e^{2 \pi i s_1} \frac{s_1 (t_1+Ak_1)  }{|\vec{s}|^{d+1} |\vec{t}+ A\vec{k}|^{d+1}} d\vec{t} d\vec{s} .
\end{eqnarray*}
To show the desired inequality, it suffices to prove 
\begin{eqnarray*}
&& \lim_{k_1 \rightarrow \infty} \int_{-A^2\mathfrak{C} k_1/2}^{ A^2\mathfrak{C} k_1 /2}... \int_{-A^2\mathfrak{C} k_1/2}^{A^2 \mathfrak{C}k_1/2} \prod_{j=1}^n \phi\left( \theta_x + \vec{\alpha}_j \cdot \vec{t} + \vec{\beta}_j \cdot \frac{\vec{s}}{A \mathfrak{C}k_1}\right) e^{2 \pi i s_1} \frac{s_1}{|\vec{s}|^{d+1}} d \vec{s} \\&=&  \left[ \prod_{j=1}^n \phi\left( \theta_x + \vec{\alpha}_j \cdot \vec{t} \right)\right] \int_\mathbb{R} ... \int_\mathbb{R} e^{2 \pi i s_1} \frac{s_1}{|\vec{s}|^{d+1}} d\vec{s}
\end{eqnarray*}
uniformly in $t \in [-\frac{A}{2},\frac{A}{2}]^d$.  Indeed, the RHS of the uniform limit is integrated with respect to $\vec{t}$ and summed over all appropriate vectors $\vec{k} $ to give
\begin{eqnarray*}
\left[ \int_{\mathbb{R}^d} e^{2 \pi i  s_1}\frac{ s_1 d\vec{s}}{|\vec{s}|^{d+1}} \right] \times \left[ \sum_{\vec{z} \in \mathbb{Z}^d: z_1 =k_1, |z_2|, ..,|z_d| \lesssim k_1} \frac{1}{|Az_1|^{d}}  \right] \simeq \frac{1}{A^d k_1}.
\end{eqnarray*}
Moreover, 
\begin{eqnarray*}
\left[ \frac{Ak_1}{A^{d+1}} \right]  \left[ \sum_{\vec{z} \in \mathbb{Z}^d: z_1 =k_1, |z_2|, ..,|z_d| \lesssim k_1} \frac{1}{|Az_1|^{d}}  \right]  \simeq \left[ \frac{Ak_1}{A^{d+1}} \right] \sum_{\vec{\nu} \in \mathbb{Z}^{d-1}:  |\tilde{\vec{\nu}}| \gtrsim k_1} \frac{1}{\left|\vec{\nu}\right|^{d+1}} \simeq\frac{1}{A^dk_1}.
 \end{eqnarray*}
 To prove the uniform limit,  it suffices to control
 
 \begin{eqnarray*}
&& \left| \int_{-A^2 k_1/2}^{ A^2 k_1 /2}... \int_{-A^2 k_1/2}^{A^2 k_1/2} \left[\prod_{j=1}^n  \phi\left( \theta_x + \vec{\alpha}_j \cdot \vec{t} + \vec{\beta}_j \cdot \frac{\vec{s}}{A \mathfrak{C} k_1}\right) - \prod_{j=1}^n  \phi\left( \theta_x + \vec{\alpha}_j \cdot \vec{t} \right) \right]  e^{2 \pi i s_1} \frac{s_1}{|\vec{s}|^{d+1}} d \vec{s} \right|\\& \leq& \left|  \int_{-A^2 k_1 /2}^{-1} \int_{-A^2 k_1 /2} ^{A^2 k_1/2} ... \int_{-A^2 k_1/2} ^{A^2 k_1/2} \left[  \prod_{j=1}^n \phi\left( \theta_x + \vec{\alpha}_j \cdot \vec{t} + \vec{\beta}_j \cdot \frac{\vec{s}}{A \mathfrak{C}k_1}\right) - \prod_{j=1}^n  \phi\left( \theta_x + \vec{\alpha}_j \cdot \vec{t} \right) \right] e^{2 \pi i s_1} \frac{s_1}{|\vec{s}|^{d+1}} d \vec{s}\right| \\ &+& \left|  \int_{-1}^1  \int_{-A^2 k_1 /2} ^{A^2 k_1/2} ... \int_{-A^2 k_1/2} ^{A^2 k_1/2} \left[  \prod_{j=1}^n \phi\left( \theta_x + \vec{\alpha}_j \cdot \vec{t} + \vec{\beta}_j \cdot \frac{\vec{s}}{A \mathfrak{C}k_1}\right) - \prod_{j=1}^n  \phi\left( \theta_x + \vec{\alpha}_j \cdot \vec{t} \right) \right]e^{2 \pi i s_1} \frac{s_1}{|\vec{s}|^{d+1}} d \vec{s}  \right| \\ &+& \left| \int_1^{A^2 k_1/2} \int_{-A^2 k_1 /2} ^{A^2 k_1/2} ... \int_{-A^2 k_1/2} ^{A^2 k_1/2}  \left[ \prod_{j=1}^n \phi\left( \theta_x + \vec{\alpha}_j \cdot \vec{t} + \vec{\beta}_j \cdot \frac{\vec{s}}{A \mathfrak{C}k_1}\right) - \prod_{j=1}^n  \phi\left( \theta_x + \vec{\alpha}_j \cdot \vec{t} \right) \right] e^{2 \pi i s_1} \frac{s_1}{|\vec{s}|^{d+1}} d \vec{s}  \right|\\ &:=& I + II +III. 
 \end{eqnarray*}
  \subsubsection{Bounding Term $II$}
Because $\phi \in \mathcal{S}(\mathbb{R})$, 

\begin{eqnarray*}
 \left| \prod_{j=1}^n \phi\left( \theta_x + \vec{\alpha}_j \cdot \vec{t} + \vec{\beta}_j \cdot \frac{\vec{s}}{A \mathfrak{C} k_1}\right) - \prod_{j=1}^n  \phi\left( \theta_x + \vec{\alpha}_j \cdot \vec{t} \right) \right| \lesssim \frac{|\vec{s}|}{k_1}
\end{eqnarray*}
uniformly in $t \in \mathbb{R}$. 
Moreover, splitting the integrals in term $II$ by 

\begin{eqnarray*}
\int_{-A^2 k/2}^{A^2 k/2} ... \int_{-A^2 k/2} ^{A^2 k/2} =\left (\int_{-A^2k /2}^{-1} + \int_{-1}^1 + \int_1^{A^2 k_\frac{1}{2}}\right)...\left(\int_{-A^2k /2}^{-1} + \int_{-1}^1 + \int_1^{A^2 k_\frac{1}{2}}\right) 
\end{eqnarray*}
and bringing the modulus inside the integrals reduces the estimate to controlling only two types of terms: 

\begin{eqnarray*}
\frac{1}{k_1} \int_{[-1,1]^d}  \frac{d\vec{s}}{|\vec{s}|^{d-1}} &\lesssim& \frac{1}{k_1}\\ 
\frac{1}{k_1} \int_{[-A^2k_1/2, A^2k_1/2]^{d-1}} \frac{d\vec{s}}{1+|\vec{s}|^{d-1}} &\lesssim &\frac{\log(k_1)}{k_1}.
\end{eqnarray*}
These estimate are trivial and together show the uniform limit of term $II$.

\subsubsection{Bounding Terms $I, III$}
Integrating terms $I$ and $III$ by parts in $s_1$ and then bringing the absolute values crudely inside the resulting integrals yields a quantity $O_A(1/|k_1|)$.  Therefore, the limit

\begin{eqnarray*}
&&\lim_{k_1 \rightarrow \infty} \int_{-A^2\mathfrak{C} k_1/2}^{ A^2\mathfrak{C} k_1 /2}... \int_{-A^2\mathfrak{C} k_1/2}^{A^2 \mathfrak{C}k_1/2} \prod_{j=1}^n \phi\left( \theta_x + \vec{\alpha}_j \cdot \vec{t} + \vec{\beta}_j \cdot \frac{\vec{s}}{A \mathfrak{C}k_1}\right) e^{2 \pi i s_1} \frac{s_1}{|\vec{s}|^{d+1}} d \vec{s} \\&=&  \left[ \prod_{j=1}^n \phi\left( \theta_x + \vec{\alpha}_j \cdot \vec{t} \right)\right] \int_\mathbb{R} ... \int_\mathbb{R} e^{2 \pi i s_1} \frac{s_1}{|\vec{s}|^{d+1}} d\vec{s}
\end{eqnarray*}
is uniform for $t \in [-A/2, A/2]^d$ and the reduction to the lemma follows. To prove the theorem, it therefore suffices to show Lemma \ref{RdL3}. To this end, we consider several cases. 
\subsection{Small Perturbations}

\subsubsection{$\vec{k} : 1 \lesssim k_1 \lesssim N, \vec{l}=\vec{0}$}
By the proceeding calculation, it suffices to bound the integrals for the cases where $\vec{l}= \vec{0}$.
Moreover, without loss of generality, $m_j =  n_0 - \vec{\alpha}_j \cdot \vec{k} + \Delta_j~\forall~j \in \{1, ..., n\}$ where $\max_{1 \leq j \leq n} |\Delta_j| \leq 2d \max_{1 \leq j \leq n } \max_{1\leq k \leq d}\left\{  |\alpha^k_j|, |\beta^k_j| \right\}:= C_{A, B}$.  (The restriction $|\Delta_j| \lesssim_{A, B} 1$ arises because summing over those $\vec{m} \in \mathbb{M}$ which are not contained in this collection yields an acceptable error term.) For each $\vec{k}: 1 \lesssim k_1 \lesssim N$, we may restrict our attention to those $\vec{m}$ in

\begin{eqnarray*}
\mathbb{M}^S_{\vec{k}} &:=&  \left\{ \vec{m} \in \mathbb{M} \left| m_j= n_0 - \vec{\alpha}_j \cdot \vec{k} + \Delta_j  ~~\forall~j \in \{1, ..., n\}~~; ~~\sum_{j=1}^n \#_j \Delta_j \vec{\alpha}_j = \vec{0}~~~;~~ \sup_{1 \leq j \leq n} |\Delta_j| \leq C_{A,B}  \right\}\right. .
\end{eqnarray*}
Indeed, in this case, 
 
 \begin{eqnarray*}
 \sum_{j=1}^n \#_j m_j(x- \vec{\alpha}_j \cdot \vec{t} -\vec{\beta}_j \cdot \vec{s}) &=& \sum_{j=1}^n \#_j (n_0 - \vec{\alpha}_j \cdot \vec{k}+\Delta_j) (x - \vec{\alpha}_j \cdot \vec{t} - \vec{\beta}_j \cdot \vec{s}) \\ &=& \left( \sum_{j=1}^n \#_j \right) n_0 x + s_1 k_1- \left( \sum_{j=1}^n \#_j \Delta_j \vec{\alpha}_j \right) \cdot \vec{t} - \left( \sum_{j=1}^n \#_j \Delta_j \vec{\beta}_j \right) \cdot \vec{s} \\ &=&  \sum_{j=1}^n \#_j n_0x + \sum_{j=1}^n \#_j \Delta_j \theta_x+ s_1 k_1- \left( \sum_{j=1}^n \#_j \Delta_j \vec{\beta}_j \right) \cdot \vec{s}  + Z.
 \end{eqnarray*}
 Hence, setting $C_{\vec{\Delta}}(x) = e^{2 \pi i A\left[ \sum_{j=1}^n \#_j  (n_0 x+\Delta_j \theta_x)\right]}$, 
  
\begin{eqnarray*}
& \int_{-\frac{A}{2}}^{\frac{A}{2}}... \int_{-\frac{A}{2}}^{\frac{A}{2}} \int_{A(k_1-\frac{1}{2})}^{A(k_1+\frac{1}{2})}  ... \int_{A(k_d -\frac{1}{2})}^{A(k_d+\frac{1}{2})} \left[ \prod_{j=1}^n \phi(x-Am_j -\vec{\alpha}_j \cdot \vec{t} - \vec{\beta}_j \cdot \vec{s})\cdot e^{2 \pi i A\#_j m_j (x-\vec{\alpha}_j \cdot \vec{t} - \vec{\beta}_j \cdot \vec{s})} \right]\frac{s_1}{|\vec{s}|^{d+1}} \frac{t_1}{|\vec{t}|^{d+1}} d \vec{t} d \vec{s}   \\ &=C_{\vec{\Delta}}(x) \int_{-\frac{A}{2}}^{\frac{A}{2}}... \int_{-\frac{A}{2}}^{\frac{A}{2}} \left[ \prod_{j=1}^n \phi(\theta_x -\vec{\alpha}_j \cdot \vec{t} - \vec{\beta}_j \cdot \vec{s})\right]  e^{2 \pi i A\mathfrak{C} s_1 k_1} e^{-A \left( \sum_{j=1}^d \#_j \Delta_j \vec{\beta}_j \right) \cdot \vec{s}}  \frac{s_1}{|\vec{s}|^{d+1}} \frac{t_1+Ak_1}{|\vec{t}+A \vec{k}|^{d+1}} d \vec{t} d \vec{s} .
\end{eqnarray*}
As in the proof of Theorem \ref{MT*}, we can control the argument of the above display by the requirement $|\theta_x| \lesssim_{\vec{\alpha}, \vec{\beta}, A} 1$.

 \subsubsection{$\vec{k} : 1 \lesssim k_1 \lesssim N, \vec{l} \not = \vec{0}$}
These terms will be subsumed as error by integration by parts (twice) with respect to $s_1$. Let $\vec{m}(n_0, \vec{k},\vec{l})$ be given component-wise by $m_j(n_0, \vec{k}, \vec{l})=n_0 -\vec{\alpha}_j \cdot \vec{k} - \vec{\beta}_j\cdot \vec{l}+\Delta_j$. Furthermore, assume $\max_{1 \leq j \leq n} |\Delta_j| \lesssim 1$. Then we need to estimate

\begin{eqnarray*}&   \left| \left[ \int_{[-\frac{A^2\mathfrak{C}k_1}{2},\frac{A^2\mathfrak{C}k_1}{2}]^d}  \int_{[-\frac{A}{2}, \frac{A}{2}]^d} e^{2 \pi i  s_1} \frac{(s_1+A^2 \mathfrak{C} k_1 l_1)  \left[ \prod_{j=1}^n \phi\left( \theta_x- A \Delta _j+ \vec{\alpha}_j \cdot \vec{t}+ \vec{\beta} _j \cdot \frac{\vec{s}}{A\mathfrak{C}k_1} \right )\right] }{|\vec{s}+A^2\mathfrak{C} k_1\vec{l}|^{d+1}} \cdot \theta(\vec{t}) \cdot \frac{\cdot t_1 + Ak_1}{|\vec{t}+ A\vec{k}|^{d+1}} d\vec{t} d\vec{s}\right]\right|  \\&\lesssim I + III + III + IV,
\end{eqnarray*} where 
\begin{eqnarray*}
&I =  \left| \left[ \int_{[-\frac{A^2\mathfrak{C}k_1}{2}, \frac{ A^2\mathfrak{C} k_1}{2}]^{d-1}} \int_{[-\frac{A}{2}, \frac{A}{2}]^d}\frac{ A^2\mathfrak{C} k_1l_1 \left[ \prod_{j=1}^n \phi\left( \theta_x-A\Delta_j+ \vec{\alpha}_j \cdot \vec{t}+ \vec{\beta} _j\cdot \left(\frac{A}{2}, \frac{\vec{s}}{A\mathfrak{C} k_1} \right)   \right )\right]}{| (A^2\mathfrak{C}  k_\frac{1}{2}, \vec{s}+A^2\mathfrak{C} k_1 \vec{l})|^{d+1}}   \cdot \theta(\vec{t}) \cdot   \frac{t_1 + Ak_1}{|\vec{t}+ A\vec{k}|^{d+1}} d\vec{t} d\vec{s}\right] \right| \\ &II = \left|  \left[ \int_{[-\frac{ A^2\mathfrak{C} k_1}{2}, \frac{A^2\mathfrak{C}k_1}{2}]^d} \int_{[-\frac{A}{2}, \frac{A}{2}]^d}e^{2 \pi i  s_1} \frac{ \left[ \prod_{j=1}^n \phi\left( \theta_x-A\Delta_j+ \vec{\alpha}_j \cdot \vec{t}+ \vec{\beta} _j \cdot \frac{\vec{s}}{A\mathfrak{C}k_1} \right )\right]}{|\vec{s}+A^2\mathfrak{C}k_1\vec{l}|^{d+1}}  \cdot \theta(\vec{t}) \cdot  \frac{t_1 + Ak_1}{|\vec{t}+ A\vec{k}|^{d+1}} d\vec{t} d\vec{s}\right] \right|  \\ & III =   \left| \left[ \int_{[-\frac{ A^2\mathfrak{C} k_1}{2},\frac{ A^2\mathfrak{C}k_1}{2}]^d} \int_{[-\frac{A}{2},\frac{A}{2}]^d}e^{2 \pi i  s_1} \frac{(s_1+A^2 \mathfrak{C} k_1 l_1)^2 \left[ \prod_{j=1}^n \phi\left( \theta_x-A\Delta_j+ \vec{\alpha}_j \cdot \vec{t}+ \vec{\beta} _j \cdot \frac{\vec{s}}{A\mathfrak{C}k_1} \right )\right] }{|\vec{s}+A^2\mathfrak{C}k_1\vec{l}|^{d+3}}   \cdot \theta(\vec{t}) \cdot \frac{t_1 + Ak_1}{|\vec{t}+ A\vec{k}|^{d+1}} d\vec{t} d\vec{s}\right]\right| \\ &IV= \left|  \left[ \int_{[-\frac{ A^2\mathfrak{C} k_1}{2}, \frac{A^2\mathfrak{C}k_1}{2}]^d} \int_{[-\frac{A}{2}, \frac{A}{2}]^d}e^{2 \pi i  s_1}(s_1 + A^2 \mathfrak{C} k_1 l_1) \frac{ \frac{d}{ds_1}\left[ \prod_{j=1}^n \phi\left( \theta_x-A\Delta_j+ \vec{\alpha}_j \cdot \vec{t}+ \vec{\beta} _j \cdot \frac{\vec{s}}{A\mathfrak{C}k_1} \right )\right]}{|\vec{s}+A^2\mathfrak{C}k_1\vec{l}|^{d+1}} \cdot \theta(\vec{t}) \cdot   \frac{t_1 + Ak_1}{|\vec{t}+ A\vec{k}|^{d+1}} d\vec{t} d\vec{s}\right] \right| .
\end{eqnarray*}
Summing over all vectors $\vec{k} \in \mathbb{Z}^d: k_1$ is fixed and $\vec{m} \in \mathbb{M}$ yields a total contribution at most

\begin{eqnarray*}
C \cdot \sum_{\vec{l} \in \mathbb{Z}^d: |l_1| \lesssim 1} ~~\sum_{\vec{\Delta} \in \mathbb{Z}^d: \left| \vec{\Delta}\right| \lesssim 1}\frac{1}{k^2_1} \frac{1}{1+|\vec{\Delta}|^2} \frac{|l_1|}{1+|\vec{l}|^{d+1}} \lesssim \frac{1}{k_1^2}.
\end{eqnarray*}
The quadratic decay in $k_1$ clearly yields an acceptable error to the main contribution.

\subsection{Large Perturbations}

\subsubsection{$\vec{k} : 1 \leq k_1 \lesssim N, \vec{l} = \vec{0}$}
If $\max_{1 \leq j \leq n} |\Delta_j| \gtrsim 1$, then an arbitrary decay of $\frac{1}{A^{\tilde{N}}}$ may be attached to $\frac{1}{|k_1|}$, which is acceptable whenever $1 \leq k_1 \lesssim N$ upon taking $A$ sufficiently large. Further details are left to the reader. 

\subsubsection{$\vec{k}: k_1 <<0: \vec{l}=\vec{0}$}
Let $\vec{m} \in \mathbb{M}$, $\vec{t} \in \prod_{j=1}^d [ A(k_j-\frac{1}{2}), A(k_j+\frac{1}{2})]$, and $\vec{s} \in \prod_{j=1}^d [A(l_j -\frac{1}{2}), A(l_j+\frac{1}{2})]$. Then there exists $j \in \{1, ..., n\}$ satisfying

\begin{eqnarray*}
|An_0 -A m_j - \vec{\alpha}_j \cdot \vec{t} - \vec{\beta}_j \cdot \vec{s} | \geq \frac{A k_1}{5n \cdot \sup_{1 \leq j \leq n} |\#_j \beta^1_j|} .
\end{eqnarray*}
Indeed, suppose not. Then 

\begin{eqnarray*}
 A \sum_{j=1}^n  \#_j m_j \beta_j^1 &=& \sum_{j=1}^n\#_j (-An_0 +A m_j + \vec{\alpha}_j \cdot \vec{t} + \vec{\beta}_j \cdot \vec{s})\beta_j^1 + \sum_{j=1}^n \#_j ( An_0 - \vec{\alpha}_j \cdot \vec{t} - \vec{\beta}_j \cdot \vec{s}) \beta_j^1\\&\leq &   -A\mathfrak{C} k_1 + \frac{Ak_1}{5}+O(A) <0,
\end{eqnarray*}
which would violate the assumption $\vec{m} \in \mathbb{M}$. 
If one iterates the proceeding uniform argument, it is a simple matter to use $|\theta_x| \gtrsim Ak_1$ to extract $O\left(|k_1|^{-\tilde{N}}\right)$decay, which is summable .

\subsubsection{$\vec{k}: k_1 >> N, \vec{l} = \vec{0}$}
The same argument as in the proceeding section yields $O\left( |k_1 |^{-\tilde{N}} \right)$ decay. Summability then ensures this contribution can be subsumed as error. Further details are left to the reader. 

\subsubsection{$|l_1| >> 1$}
It suffices to observe that for every $\vec{m} \in \mathbb{M}$, $\vec{t} \in \prod_{j=1}^d [ A(l_j-\frac{1}{2}), A(l_j+\frac{1}{2})]$, and $\vec{s} \in \prod_{j=1}^d [A(k_j -\frac{1}{2}), A(k_j+\frac{1}{2})]$, there exists $j \in \{1, ...,n\}$ satisfying 

\begin{eqnarray*}
|An_0 -A m_j - \vec{\alpha}_j \cdot \vec{t} - \vec{\beta}_j \cdot \vec{s} | \geq \frac{A l_1}{5n \cdot \sup_{1 \leq j \leq n} |\#_j \alpha^1_j|} .
\end{eqnarray*}
Indeed, suppose not. Then 

\begin{eqnarray*}
A \left| \sum_{j=1}^n \#_j m_j \alpha_j^1 \right| &=& \left| \sum_{j=1}^n \#_j (-An_0 + A m_j + \vec{\alpha}_j \cdot \vec{t} + \vec{\beta}_j \cdot \vec{s}) \alpha_j^1 +\sum_{j=1}^n \#_j ( An_0 - \vec{\alpha}_j \cdot \vec{t} - \vec{\beta}_j \cdot \vec{s}) \alpha_j^1\right|  \\ &\geq  & A\mathfrak{C} l_1 - A l_1/5 +O(A)>0,
\end{eqnarray*}
which would again violate the assumption $\vec{m} \in \mathbb{M}.$. This extra $O(|l|^{-\tilde{N}})$ decay enables us to use the same integration by parts argument as before to deduce the following estimate $\forall x \in \mathbb{R}$:
\begin{eqnarray*}
\sum_{\vec{z} \in \mathbb{M}: z_1 =k_1}  \sum_{\vec{l} \not = \vec{0}} \sum_{\vec{m} \in \mathbb{M}_S (n_0, \vec{k}, \vec{l})} \left| T^{\vec{z}, \vec{l}} _{} \left( \left\{ \vec{f}_{j, m_j}^{N, A,\#_j} \right\}_{j=1}^n\right)(x) \right| \lesssim \sum_{\vec{l} \in \mathbb{Z}^d: \vec{l} \not = 0} \sum_{\vec{\Delta} \in \mathbb{Z}^d} \frac{1}{ k_1^2} \sum_{1+|\vec{\Delta}|^{\tilde{N}}} \frac{|l_1|}{1+|\vec{l}|^{d+1}} \frac{1}{1+|l_1|^{\tilde{N}}}\lesssim \frac{1}{ k_1^2}
\end{eqnarray*}

\subsection{PART 2: The Irrational Case}
Let $\left\{ \alpha_j^m \right\}_{1 \leq j \leq n ; 1 \leq m \leq d} \in \mathbb{R}^{nd}$ be given by $\alpha_j^m = a_j q_j^m$ and $\beta_j^1 = a_j r_j$ for all components $1 \leq j \leq n, 1 \leq m \leq d$. Furthermore, assume there are $\vec{\#}\in \mathbb{R}^n$ and $\mathfrak{C} >0$ such that 

\begin{eqnarray*}
\sum_{j=1}^n \#_j \alpha_j^m=  \sum_{j=1}^n \#_j \alpha_j^m \alpha_j^l =
\sum_{j=1}^n \#_j \beta_j^1 = 
\sum_{j=1}^n \#_j \left[ \beta_j^1 \right]^2 &=& 0 ~~~(1 \leq m,l \leq d)\\ 
\sum_{j=1}^n  \#_j \alpha_j^m \beta_j^1 &=&\mathfrak{C}  \delta_{1,m}~~~~(1 \leq m \leq d)
\end{eqnarray*}
 with the additional property that $\#_j a_j^2 \in \mathbb{Q}$ for all $1 \leq j \leq n$. By dilating, we shall assume $q_j^m, r_j , \#_j a_j^2 \in \mathbb{Z}$ .  For $A \in \mathbb{Z}^+$ and $j \in \{1, ..., d\},$ construct the functions 

\begin{eqnarray*}
f_j^{N, A, \vec{\#}} (x):= \sum_{-N \leq m \leq N} \phi(x-Aa_j m) e^{2 \pi i A \#_j a_j m x} = \sum_{-N \leq m \leq N} f_{m}^{N, A, \#} (x).
\end{eqnarray*}
As in the rational case, let 

\begin{eqnarray*}
\mathbb{M} :=   \left\{\vec{m} \in \mathbb{Z}^n \cap [-N,N]^n \left|  \sum_{j=1}^n \#_j a_j m_j \beta_j^1 >0; \sum_{j=1}^n \#_j a_jm_j \alpha_j^k =0 ~\forall ~1 \leq k \leq d \right.\right\}
\end{eqnarray*}
and note 
\begin{eqnarray*}
\vec{m} \not \in \mathbb{M} \implies T_{\hat{K}_d(A\cdot) \hat{K}_d(B\cdot)[\{ \vec{\alpha}^m\}, \{\vec{\beta}^m\}, \Phi, \Psi]}\left(\left\{  f_{m_j}^{N, A \#_j} \right\}_{j=1}^n\right) \equiv0
\end{eqnarray*}
for sufficiently large constant A (only depending on the matrices $\left\{ \vec{\alpha}_j\right\}, \left\{ \vec{\beta}_j\right\}$.  As usual, set $S(\vec{a})= \left\{ a_1^{-1}, ..., a_n^{-1} \right\}$, choose $\rho = 1/A^2$, and let $Bohr_{c(\vec{a})N}(S(\vec{a}), \rho) = \left\{ N_1, ..., N_{|Bohr_{c(\vec{a})N}(S(\vec{\alpha}), \rho)} \right\}$. Then $|Bohr_{c(\vec{a})N}(S(\vec{\alpha}), \rho)| \simeq_A N$. Construct
\begin{eqnarray*}
\Omega = \bigcup_{n_0 \in Bohr_{c(\vec{a})N}(S(\vec{a}), \rho)} \left[ An_0- \frac{c_{\left\{ \vec{\alpha}_j \right\}, \left\{ \vec{\beta}_j \right\} }}{A}, An_0 + \frac{c_{\left\{ \vec{\alpha}_j \right\} ,\left\{ \vec{\beta}_j \right\}}}{A}  \right] .
\end{eqnarray*}
Then $|\Omega| \gtrsim_A N$ for small enough implicit constant in the definition of $\tilde{N}$. Consequently, it suffices to show the pointwise estimate

\begin{eqnarray*}
\left| T_{\hat{K}_d(A\cdot) \hat{K}_d(B\cdot)[\{ \vec{\alpha}^m\}, \{\vec{\beta}^m\}, \Phi, \Psi]}\left(\left\{  f_{m_j}^{N, A \#_j} \right\}_{j=1}^n\right)(x)\right| \gtrsim_A \log(N) 1_{\Omega}(x)~\forall x \in \mathbb{R}. 
\end{eqnarray*}

\subsection{Main Contribution: $\vec{k} : k_1 \gtrsim 1; \vec{l}=\vec{0}$}
First note that we are able to approximately solve 

\begin{eqnarray*}
n_0 - a_j m_j(n_0,  \vec{k},\vec{0}
) - \vec{\alpha}_j \cdot \vec{k} \simeq 0~\forall~1 \leq j \leq n.
\end{eqnarray*}
Indeed, for any $n_0 \in Bohr_{c(\vec{a})N}(S(\vec{a}), \rho)$, define $m_j(n_0, \vec{k}, \vec{0}) = \mathcal{N}_j^{n_0} -\vec{q}_j \cdot \vec{k} \in \mathbb{Z}$, where $\mathcal{N}_j^{n_0}$ is the closest integer to $n_0/a_j$. By construction,

\begin{eqnarray*}
|n_0 -a_j m_j(n_0, \vec{k}, \vec{0}) - \vec{\alpha}_j \cdot \vec{k}|= \delta(n_0,j) \lesssim 1/A^2
\end{eqnarray*}
and for sufficiently small $c(\vec{a})$ we have $\mathcal{N}_j^{n_0} - \vec{q}_j \cdot \vec{k} \in [-N, N]$ for all $n_0 \in Bohr_{c(\vec{\alpha})N} (S(\vec{a}), \rho)$ and $|\vec{k}| \lesssim N$. 
Moreover, it is simple to observe for $\vec{m} \in \mathbb{M}$

\begin{eqnarray*}
\sum_{j=1}^n \#_j a_j (\mathcal{N}_j^{n_0} - \vec{q}_j \cdot \vec{k})(x-\vec{\alpha}_j \cdot \vec{t} - \vec{\beta}_j \cdot \vec{s}) &=&  \sum_{j=1}^n  \#_j a_j \mathcal{N}_j^n (x- \vec{\beta}_j \cdot \vec{s}) + \mathfrak{C} k_1 s_1 \\ &=& \sum_{j=1}^n \#_j (n_0 + \delta(n_0,j))(x - \vec{\beta}_j \cdot \vec{s}) + \mathfrak{C} k_1 s_1 \\ &=& \sum_{j=1}^n \#_j n_0 x + \left[ \mathfrak{C} k_1 s_1 - \sum_{j=1}^n \#_j \delta(n_0,j) \vec{\beta}_j \cdot \vec{s} \right].
\end{eqnarray*}
We may run the same $t-$uniform argument as before to deduce an acceptable main contribution. Details are left to the reader. 

\subsubsection{Small Perturbations}
A quick glance at the rational case shows that $\vec{l}=0$ is the only potential difficulty. However, as in the proof of the irrational case of Theorem \ref{MT*}, we may rewrite
\begin{eqnarray*}
\sum_{j=1}^n \#_j a_j \left( \mathcal{N}_j^{n_0} - \vec{q}_j \cdot \vec{k} +\Delta_j \right)x &=& \sum_{j=1}^n \#_j a_j \mathcal{N}_j^{n_0} x+  \sum_{j=1}^n   \#_j a_j \Delta_j \left( n_0 + \theta_x \right) \\&=&  \sum_{j=1}^n \#_j a_j \mathcal{N}_j^{n_0} x+  \sum_{j=1}^n   \#_j a_j \Delta_j \left( a_j \mathcal{N}_j^{n_0} + \delta(n_0, j) + \theta_x \right) \\ &=& \sum_{j=1}^n \#_j a_j \mathcal{N}_j^{n_0} x + \sum_{j=1}^n\#_j a_j \Delta_j \left( a_j \mathcal{N}_j^{n_0} + \delta(n_0, j) + \theta_x \right) + Z, 
\end{eqnarray*}
where $Z \in \mathbb{Z}$. Because $|\delta(n_o, j)| \leq 1/A^2$ and $| \theta_x | \lesssim 1/A$, the argument of the above display is under good control. 
It is worth pointing out that our arguments do not require us to solve or even approximately solve up to some admissible error the relations 

\begin{eqnarray*}
m_j(n_0, \vec{k} , \vec{l}) = n_0 - \vec{\alpha}_j \cdot \vec{k} - \vec{\beta}_j \cdot \vec{l}~~\forall ~1 \leq j \leq n.
\end{eqnarray*}
Indeed, we have seen that when $\vec{l} \not = 0$, the corresponding contribution can be handled by brutally placing mods inside its various constituent pieces. Specifically, we have the estimate 
\begin{eqnarray*}
\sum_{\vec{k} \in \mathbb{Z}^d} \sum_{\vec{l} \not = \vec{0}} \sum_{\vec{m} \in \mathbb{M}_S(n_0, \vec{k}, \vec{l})}\left| T^{\vec{k}, \vec{\l}}_{\hat{K}_d(A\cdot) \hat{K}_d(B\cdot)[\{ \vec{\alpha}^m\}, \{\vec{\beta}^m\}, \Phi, \Psi]}\left(\left\{  f_{m_j}^{N, A \#_j} \right\}_{j=1}^n\right) (x) \right| \lesssim 1~\forall~x \in \mathbb{R}.
\end{eqnarray*}

\subsubsection{Large Perturbations}
This case is handled using the same argument as before, so the details are omitted. 

\end{proof}

\small
\nocite{*}
\bibliographystyle{plain}
\bibliography{NegRes}

 \end{document}